\newtheorem{theorem}{Theorem}[section]
\newtheorem{lemma}[theorem]{Lemma}
\newtheorem{remark}[theorem]{Remark}
\newtheorem{proposition}[theorem]{Proposition}
\newtheorem{definition}[theorem]{Definition}
\newtheorem{corollary}[theorem]{Corollary}
\numberwithin{equation}{section}
\title{Exponential stabilization on infinite dimensional system with impulse controls
\thanks{This work was supported by the
National Natural Science Foundation of China under grant 11701138, 11601137 and the Natural Science Foundation of Hebei Province, China under grant A2020202033. }}
\author{
Qishu Yan
\thanks{School of Science,
Hebei University of Technology, Tianjin 300400, China;  Email:
yanqishu@whu.edu.cn.}
\and
Huaiqiang Yu
\thanks{School of Mathematics, Tianjin University, Tianjin 300354, China;  Email:  huaiqiangyu@tju.edu.cn.}
\thanks{Corresponding author.}
}
\date{}
\begin{document}
\selectlanguage{english}
\maketitle

\begin{abstract}
    This paper studies the exponential stabilization on infinite dimensional system with impulse controls,
     where  impulse instants  appear periodically.
     The first main result shows that  exponential stabilizability  of the control system with a periodic feedback law is equivalent to one kind of weak observability inequalities.
     The second main result presents that, in the setting of a discrete LQ problem, the exponential stabilizability of control system with a periodic feedback law is equivalent to
    the solvability of  an algebraic Riccati-type equation which was built up  in [Qin, Wang and Yu, SIAM J. Control Optim., 59 (2021), pp. 1136-1160] for finite dimensional system.
     As an application, some sufficient and necessary condition for the exponential stabilization of  an impulse controlled system governed by coupled heat equations is given.
\end{abstract}
\vskip 10pt
    \noindent
\textbf{Keywords.}
 Exponential stabilization, Infinite dimensional system, Impulse control, Periodic feedback
\vskip 10pt
    \noindent
\textbf{Mathematics Subject Classifications.}
35K40, 93B05, 93C20

\section{Introduction}\label{intro}

  We start this section with some notations. Let $\mathbb{R}^+:=(0,+\infty),
\mathbb{N}:=\{0, 1, 2, \dots\}$ and $\mathbb{N}^+:=\{1, 2, \dots\}$.
    Given a Hilbert space $V$, we denote its norm and inner product by $\|\cdot\|_V$ and $\langle\cdot,\cdot\rangle_V$ respectively. Let ${\mathcal L}(V; W)$ be the space of all linear bounded operators
from a Hilbert space  $V$ to another  Hilbert space $W$ and we write $\mathcal{L}(V):=\mathcal{L}(V;V)$ for simplicity.  We denote by $\mbox{Card}E$  the number of the set of $E$. Given $\hbar\in\mathbb{N}^+$ arbitrarily.  Let
\begin{equation*}
\mathcal{J}_{\hbar}:=\{\{t_j\}_{j\in\mathbb{N}}:t_0=0<t_1<t_2<\cdots \textrm{ and }
t_{j+\hbar}-t_j=t_\hbar\;\;\mbox{for each}\;\; j\in \mathbb N\}.
\end{equation*}
Then, for each $\{t_j\}_{j\in \mathbb{N}}\in \mathcal{J}_\hbar$, we have
\begin{equation}\label{Intro-2}
t_{j+k\hbar}=t_j+t_{k\hbar}=t_j+kt_{\hbar}\;\;\mbox{for all}\;\;
j, k\in \mathbb{N}.
\end{equation}
    Given an (bounded or unbounded) operator $O$, let $O^*$ be its adjoint operator.
    Given a Hilbert space $V$, we define the following sets:
\begin{equation*}\label{1014-1}
\mathcal{SL}(V):=\{O\in \mathcal L(V): O^*=O\},\;\;\mathcal{SL}_+(V):=\{O\in \mathcal{SL}(V): \langle v,Ov\rangle_V\geq 0\,\textrm{ for any } v\in V\},
\end{equation*}
\begin{equation*}\label{yu-2-21-1}
    \mathcal{SL}_{\gg}(V):=\{O\in \mathcal{SL}_+(V):
    \textrm{ there exists }\delta>0\;\mbox{ so that }\;O-\delta I\in \mathcal{SL}_+(V)\},
\end{equation*}
\begin{equation*}\label{yu-2-21-2}
\mathcal M_{\hbar,+}(V):=\{(M_j)_{j\in\mathbb N^+}: M_j\in \mathcal{SL}_{+}(V)\;\mbox{and}\;\,M_{j+\hbar}=M_j \textrm{ for each }j\in\mathbb N^+\}
\end{equation*}
   and
\begin{equation*}\label{1014-2}
\mathcal M_{\hbar,\gg}(V):=\{(M_j)_{j\in\mathbb N^+}: M_j\in \mathcal{SL}_{\gg}(V)\;\mbox{and}\;\,M_{j+\hbar}=M_j \textrm{ for each }\,j\in\mathbb N^+\}.
\end{equation*}
\par
    Throughout this paper, we use $H$ and $U$ to denote two reflexive Hilbert spaces.  Assume that $H$ and $U$ are identified with their dual spaces respectively.
    Let the linear unbounded operator $A$ with  domain $D(A)\subset H$ be a generator of a $C_0$-semigroup $\{e^{At}\}_{t\geq 0}$ over $H$.
    Let $\mathcal{B}_{\hbar}:=\{B_k\}_{k=1}^\hbar\subset\mathcal{L}(U;H)$ and $\mathcal{B}^*_{\hbar}:=\{B_k^*\}_{k=1}^{\hbar}\subset\mathcal{L}(H;U)$.
\subsection{Control problem and aim}
    Given $\Lambda_\hbar=\{t_j\}_{j\in\mathbb{N}}\in \mathcal{J}_{\hbar}$,
we consider the following impulse controlled system:
\begin{equation}\label{Intro-3}
\begin{cases}
  \displaystyle x'(t)=Ax(t),   & t\in\mathbb R^+\backslash \Lambda_\hbar,   \\
  x(t_j^+)=x(t_j)+B_{\nu(j)} u_j,   &j\in\mathbb{N}^+,   \\
\end{cases}
\end{equation}
where $\nu(j):=j-[j/\hbar]\hbar$  for each $j\geq 1$,
$x(t_j^+)$ denotes the right limit at $t_j$ for the function $x(\cdot)$,
and $(u_j)_{j\in\mathbb{N}^+}\in l^2(\mathbb{N}^+;U)$. Here and in what follows, $[s]:=\max\{k\in\mathbb N:\,k<s\}$ for each $s>0$. We denote the system (\ref{Intro-3}) by
$[A,\mathcal{B}_{\hbar},\Lambda_{\hbar}]$ and the solution of (\ref{Intro-3}) by $x(\cdot;x_0,u)$ with control $u\in l^2(\mathbb{N}^+;U)$ and initial data $x_0\in H$.
\par

Now,  we introduce some definitions on exponential stabilization of the system $[A,\mathcal{B}_{\hbar},\Lambda_{\hbar}]$.
\begin{definition}\label{Intro-5}
    The system $[A,\mathcal{B}_{\hbar},\Lambda_{\hbar}]$
  is called  exponentially $\hbar$-stabilizable if there is a sequence of feedback laws $\{F_k\}_{k=1}^\hbar\subset \mathcal L(H;U)$
  so that the following closed-loop system is exponentially stable:
  \begin{equation}\label{Intro-4(2)}
\begin{cases}
  \displaystyle x'(t)=Ax(t),   & t\in\mathbb R^+\backslash \Lambda_\hbar,   \\
  x(t_j^+)=x(t_j)+B_{\nu(j)} F_{\nu(j)} x(t_j),   &j\in\mathbb{N}^+,   \\
\end{cases}
\end{equation}
 i.e., there are constants $C>0$ and $\mu>0$ so that any solution of \eqref{Intro-4(2)}, denoted by $x_{\mathcal F}(\cdot)$,
satisfies that
\begin{equation*}
\|x_{\mathcal F}(t)\|_{H}\leq C \mathrm e^{-\mu t}\|x_{\mathcal F}(0)\|_{H}\;\;\mbox{for any}\;\;t\in\mathbb{R}^+.
\end{equation*}
\end{definition}
   If the system $[A,\mathcal{B}_{\hbar},\Lambda_{\hbar}]$
  is  exponentially $\hbar$-stabilizable with feedback law $\{F_k\}_{k=1}^{\hbar}$, we simply write $\mathcal F:=\{F_k\}_{k=1}^\hbar$  and denote the closed-loop system \eqref{Intro-4(2)} by $[A,\{B_kF_k\}_{k=1}^\hbar, \Lambda_\hbar]$.
Since $\{F_k\}_{k=1}^\hbar$ appears at time instants $\Lambda_\hbar$ $\hbar$-periodically, the feedback law $\mathcal F$ is indeed $\hbar$-periodic time-varying.
\vskip 5pt
\emph{Aim.} In this paper, we shall mainly present some equivalence criteria for the exponential stabilization of the system $[A,\mathcal{B}_{\hbar},\Lambda_{\hbar}]$.
\subsection{Related works and motivation}
    The stabilizability of infinite dimensional system with distributed controls have been studied widely. For instance, we can refer to  \cite{wang-xu,T-W-X, L-W-X-Y,Ammari,Badra2, Bastin,Coron2, Flandoli-Lasiecka-Triggiani,Huang-Wang-Wang,
    Kunisch,Lasiecka-Triggiani,Lions, Liu, Pritchard,Russell}. Recently, the impulse control system has  attracted a lot of attention. For example, we can refer to \cite{Duan-Wang-Zhang,Duan-Wang,Q-W,Q-W-Y,Trelet-wang-zhang,P-W-X}. In particular, in \cite{Q-W-Y}, the authors studied the exponential stabilizability of finite dimensional system with impulse controls and obtained some equivalence results. Therefore, how to obtain the equivalence conditions for the exponential stabilizability of infinite dimensional system with impulse controls seems very interesting.

\subsection{Main results}
\par
    The first main result can be stated as follows.
\begin{theorem}\label{yu-main-theorem}
    Given  $[A,\mathcal{B}_{\hbar}, \Lambda_{\hbar}]$.  The following statements are equivalent:
\begin{enumerate}
  \item [(i)] The system $[A,\mathcal{B}_{\hbar}, \Lambda_\hbar]$ is exponentially $\hbar$-stabilizable.
  \item [(ii)] For each $\sigma\in (0,1)$, there are $k=k(\sigma)\in\mathbb{N}^+$ and
  $C(\sigma)>0$ so that
\begin{equation}\label{yu-3-8-1}
    \|e^{A^* t_{k}}\varphi\|_H\leq C(\sigma)
    \left(\sum_{j=1}^{k}\|B^*_{\nu(j)}e^{A^*(t_{k}-t_j)}
    \varphi\|_U^2\right)^{\frac{1}{2}}
    +\sigma \|\varphi\|_H\;\;\mbox{for any}\;\;\varphi\in H.
\end{equation}
  \item [(iii)] There are $\sigma\in(0,1)$, $k\in\mathbb{N}^+$ and $C>0$ so that
 \begin{equation}\label{0315-1}
   \|\mathrm{e}^{A^*t_{k\hbar}}\varphi\|_H \leq C\left(\sum_{j=1}^{k\hbar-1}\|B^*_{\nu(j)}\mathrm{e}^{A^*(t_{k\hbar}-t_j)}\varphi
    \|_U^2\right)^{\frac{1}{2}}
    +\sigma\|\varphi\|_H \;\;\mbox{for any}\;\;\varphi\in H.
\end{equation}
    \item [(iv)] There are $\sigma\in (0,1)$, $k\in\mathbb{N}^+$ and $C>0$ so that, for any $x_0\in H$, there is a control $u\in l^2(\mathbb{N}^+;U)$ so that
\begin{equation}\label{yu-3-9-bbb-2}
    \|x(t_{k\hbar};x_0,u)\|_H\leq \sigma\|x_0\|_H
    \;\;\mbox{and}\;\;\|u\|_{l^2(\mathbb{N}^+;U)}\leq C\|x_0\|_H.
\end{equation}
\end{enumerate}
\end{theorem}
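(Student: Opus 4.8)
\emph{Proof (plan).} The plan is to reduce everything to the discrete‑time dynamics sampled at the impulse instants. Unwinding \eqref{Intro-3} gives, for a control $u\in l^2(\mathbb N^+;U)$ and datum $x_0$,
\begin{equation*}
x(t_n;x_0,u)=e^{At_n}x_0+\sum_{j=1}^{n-1}e^{A(t_n-t_j)}B_{\nu(j)}u_j\qquad(n\in\mathbb N^+),
\end{equation*}
so that for fixed $k\in\mathbb N^+$ the end‑of‑horizon map $x_0\mapsto x(t_{k\hbar};x_0,u)$ equals $\mathcal L_k x_0+\mathcal R_k u$, where $\mathcal L_k:=e^{At_{k\hbar}}\in\mathcal L(H)$ and $\mathcal R_k\in\mathcal L(l^2;H)$ is the reachability operator on $[0,t_{k\hbar}]$; one has $\mathcal L_k^*=e^{A^*t_{k\hbar}}$ and $(\mathcal R_k^*\varphi)_j=B_{\nu(j)}^*e^{A^*(t_{k\hbar}-t_j)}\varphi$. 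Hence \eqref{0315-1} reads $\|\mathcal L_k^*\varphi\|_H\le C\|\mathcal R_k^*\varphi\|_{l^2}+\sigma\|\varphi\|_H$ and \eqref{yu-3-9-bbb-2} reads ``there is $u$ with $\|\mathcal L_k x_0+\mathcal R_k u\|_H\le\sigma\|x_0\|_H$ and $\|u\|\le C\|x_0\|_H$'': that is, (iii) and (iv) are a weak observability inequality and a weak null‑controllability‑with‑cost statement for the pair $(\mathcal L_k,\mathcal R_k)$. I will prove the cycle (i)$\Rightarrow$(iv)$\Rightarrow$(i), together with (iii)$\Leftrightarrow$(iv) and (ii)$\Leftrightarrow$(iii). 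Everywhere the periodicity identity \eqref{Intro-2} is used: the impulse pattern on $[t_{m\hbar},t_{(m+k)\hbar}]$ is an exact copy of the one on $[0,t_{k\hbar}]$, so an inequality obtained on one super‑period transfers to any other, and the observation/reachability sums of consecutive super‑periods concatenate into a single sum over $\{1,\dots,mk\hbar-1\}$.

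The duality part is the routine one. For (i)$\Rightarrow$(iv): if $\mathcal F=\{F_k\}$ stabilizes, the closed‑loop state solves \eqref{Intro-3} with $u_j^{\mathcal F}:=F_{\nu(j)}x_{\mathcal F}(t_j)$, and exponential decay of $x_{\mathcal F}$ together with $t_{k\hbar}=kt_\hbar\to\infty$ yields $\|u^{\mathcal F}\|_{l^2}\le C_1\|x_0\|_H$ (because $\sum_j e^{-2\mu t_j}<\infty$ by \eqref{Intro-2}) and $\|x(t_{k\hbar};x_0,u^{\mathcal F})\|_H=\|x_{\mathcal F}(t_{k\hbar})\|_H\le Ce^{-\mu t_{k\hbar}}\|x_0\|_H<\|x_0\|_H$ for $k$ large. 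For (iv)$\Rightarrow$(iii): write $\mathcal L_k x_0=(\mathcal L_k x_0+\mathcal R_k u)-\mathcal R_k u$, pair with $\varphi$, apply Cauchy--Schwarz and the two bounds of (iv), and take the supremum over $\|x_0\|_H\le 1$. For (iii)$\Rightarrow$(ii): iterating \eqref{0315-1} over $m$ consecutive super‑periods replaces the free‑evolution term by $\sigma^m\|\varphi\|_H$ while the constant stays finite, and $\sigma^m<\sigma'$ for $m$ large gives \eqref{yu-3-8-1} with the prescribed $\sigma'$ and $k=mk_0\hbar$. For (ii)$\Rightarrow$(iii): given $\sigma'\in(0,1)$, put $M_\hbar:=\sup_{0\le s\le t_\hbar}\|e^{As}\|_{\mathcal L(H)}$ and apply (ii) with $\sigma:=\sigma'/M_\hbar$; the least multiple of $\hbar$ above $k(\sigma)$ exceeds it by a time gap $\le t_\hbar$ (again by \eqref{Intro-2}), so composing with the free evolution over that fixed‑length residual window turns \eqref{yu-3-8-1} into \eqref{0315-1}. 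Finally (iii)$\Rightarrow$(iv) is the HUM step: for fixed $x_0$ and $\varepsilon>0$ minimize the coercive functional $\Phi_\varepsilon(\varphi):=\tfrac12\|\mathcal R_k^*\varphi\|_{l^2}^2+\sigma\|x_0\|_H\|\varphi\|_H-\langle\mathcal L_k x_0,\varphi\rangle_H+\varepsilon\|\varphi\|_H^2$; with $\varphi_\varepsilon$ a minimizer and $u_\varepsilon:=-\mathcal R_k^*\varphi_\varepsilon$ (zero on indices $\ge k\hbar$), the Euler--Lagrange relation gives $\|\mathcal L_k x_0+\mathcal R_k u_\varepsilon\|_H\le\sigma\|x_0\|_H+2\varepsilon\|\varphi_\varepsilon\|_H$, while $\Phi_\varepsilon(\varphi_\varepsilon)\le\Phi_\varepsilon(0)=0$ combined with \eqref{0315-1} forces $\|u_\varepsilon\|_{l^2}\le 2C\|x_0\|_H$ and $\varepsilon\|\varphi_\varepsilon\|_H^2\le 2C^2\|x_0\|_H^2$, hence $\varepsilon\|\varphi_\varepsilon\|_H\to 0$; a weak limit of $u_\varepsilon$ as $\varepsilon\downarrow 0$, together with weak lower semicontinuity of the norm, produces a control as required in \eqref{yu-3-9-bbb-2} (with constant $2C$).

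The substantive implication is (iv)$\Rightarrow$(i). Iterating the $\sigma$‑contraction of (iv) across super‑periods shows that for every $x_0$ there is $u\in l^2(\mathbb N^+;U)$ with $\sum_{j\ge1}\big(\|x(t_j;x_0,u)\|_H^2+\|u_j\|_U^2\big)\le K\|x_0\|_H^2$ for a constant $K$ independent of $x_0$. Consider then the discrete‑time, $\hbar$‑periodic linear--quadratic problem of minimizing $\sum_{j\ge1}(\|x(t_j)\|_H^2+\|u_j\|_U^2)$ over $u$: its value function restricted to super‑period boundaries has the form $\langle Px,x\rangle_H$ with $P\in\mathcal{SL}_+(H)$ satisfying $I\le P\le KI$ — quadraticity from the parallelogram law and strict convexity in $u$, the upper bound from the above estimate, the lower bound because the running cost on the super‑period starting at $t_{m\hbar}$ already contains $\|x(t_{m\hbar})\|_H^2$. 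Since the data are $\hbar$‑periodic, standard discrete‑time LQ theory — i.e. the bounded, exactly periodic solution of the associated algebraic Riccati‑type equation, which is the object of this paper's second main theorem and the infinite‑dimensional analogue of the construction in \cite{Q-W-Y} — provides an $\hbar$‑periodic optimal feedback $u_j^*=-F_{\nu(j)}x^*(t_j)$ with $F_k\in\mathcal L(H;U)$. Along the closed‑loop trajectory of \eqref{Intro-4(2)}, Bellman's principle gives $\langle Px^*(t_{(m+1)\hbar}),x^*(t_{(m+1)\hbar})\rangle_H\le\langle Px^*(t_{m\hbar}),x^*(t_{m\hbar})\rangle_H-\|x^*(t_{m\hbar})\|_H^2\le(1-K^{-1})\langle Px^*(t_{m\hbar}),x^*(t_{m\hbar})\rangle_H$, so $\|x^*(t_{m\hbar})\|_H$ decays geometrically; a one‑super‑period Gr\"onwall‑type estimate on \eqref{Intro-4(2)} then upgrades this to $\|x_{\mathcal F}(t)\|_H\le Ce^{-\mu t}\|x_0\|_H$ for all $t>0$, which is (i). This closes the cycle.

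The main obstacle is precisely (iv)$\Rightarrow$(i): (iv) only furnishes an \emph{open‑loop} control over $k\hbar$ steps, and the naive ``re‑plan every $k\hbar$ steps'' rule produces a $k\hbar$‑periodic — not an $\hbar$‑periodic — feedback, so Definition~\ref{Intro-5} is not met directly. Converting it into an $\hbar$‑periodic feedback forces one through the LQ/Riccati machinery, where the two delicate points are (a) that (iv) really yields a \emph{finite and uniformly bounded} LQ value, so that the periodic Riccati iteration admits a bounded, exactly periodic fixed point, and (b) that the resulting feedback is \emph{exponentially} — not merely strongly — stabilizing; it is here that the quantitative margin $\sigma<1$ in (iii)--(iv), rather than plain approximate null‑controllability, is indispensable. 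All the other implications are duality plus bookkeeping with the period relation \eqref{Intro-2}.
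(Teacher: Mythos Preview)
Your plan is correct and follows essentially the same route as the paper: duality and periodicity bookkeeping for the equivalences among (ii), (iii), (iv), and the LQ/Riccati machinery of Theorem~\ref{thm-1} for the substantive step (iv)$\Rightarrow$(i). The differences are organizational rather than conceptual. The paper runs the cycle (i)$\Rightarrow$(ii)$\Rightarrow$(iii)$\Rightarrow$(iv)$\Rightarrow$(i): (i)$\Rightarrow$(ii) is proved \emph{directly} by pairing the feedback trajectory with $\varphi$ (rather than going through (iv)$\Rightarrow$(iii) and then iterating as you do), and (iii)$\Rightarrow$(iv) uses the HUM functional $\mathcal J_\varepsilon(\varphi)=\tfrac12\|\mathcal R_k^*\varphi\|^2+\langle\varphi,e^{At_{k\hbar}}x_0\rangle+(\sigma\|x_0\|+\varepsilon)\|\varphi\|$ with a linear $\varepsilon$-perturbation, which forces a separate coercivity argument by cases; your quadratic perturbation $\varepsilon\|\varphi\|^2$ is a legitimate and slightly smoother variant. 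Your (iv)$\Rightarrow$(iii) and the iteration (iii)$\Rightarrow$(ii) are valid additions not needed in the paper's cycle.

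One inaccuracy to fix in your (iv)$\Rightarrow$(i) sketch: the Bellman identity at a super-period boundary reads
\[
\langle P\,x^*(t_{m\hbar}^+),x^*(t_{m\hbar}^+)\rangle_H
=\sum_{j=m\hbar+1}^{(m+1)\hbar}\big(\|x^*(t_j)\|_H^2+\|u_j^*\|_U^2\big)
+\langle P\,x^*(t_{(m+1)\hbar}^+),x^*(t_{(m+1)\hbar}^+)\rangle_H,
\]
and the running sum contains $\|x^*(t_{m\hbar+1})\|_H^2,\dots$, not $\|x^*(t_{m\hbar}^+)\|_H^2$; so neither $I\le P$ nor the contraction inequality $\langle Px^*_{m+1},x^*_{m+1}\rangle\le(1-K^{-1})\langle Px^*_m,x^*_m\rangle$ follows as written. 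This is harmless for the plan, because the conclusion you need is precisely the content of Theorem~\ref{thm-1}, which you already invoke; the paper's own argument there bypasses the Lyapunov step by showing $\sum_{k\ge1}\|(\Phi_\hbar)^k x_0\|_H^2\le C\|x_0\|_H^2$ uniformly, whence $\|(\Phi_\hbar)^{\widehat k}\|_{\mathcal L(H)}\le\tfrac12$ for some $\widehat k$, and exponential decay follows.
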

\begin{remark}
    It is well known that, on $[0,t_k]$ $(k\in\mathbb{N}^+)$, the dual-observe system of $[A,\mathcal{B}_{\hbar}, \Lambda_\hbar]$ is given by
\begin{equation*}\label{yu-3-10-1}
\begin{cases}
    y'(t)+A^*y(t)=0,&t\in[0,t_k],\\
    y(t_k)=\varphi\in H,\\
    z(t_j)=B^*_{\nu(j)}y(t_j),&j\in\{1,2,\ldots,k\}.
\end{cases}
\end{equation*}
    We denote this observe system by $[A^*,\mathcal{B}^*_\hbar,\Lambda_{\hbar}]$.
    From the viewpoint of observability, the inequality (\ref{yu-3-8-1}) is called a weak observability of  $[A^*,\mathcal{B}^*_\hbar,\Lambda_{\hbar}]$ on $[0,t_k]$.
    It should be noted that, in
    infinite dimensional setting, the strong observability inequality (i.e., $\sigma=0$ in (\ref{yu-3-8-1})),
    which means the corresponding control system is null controllable, does not hold generally (see, for instance, \cite{Q-W}).
\end{remark}
    The proof of Theorem \ref{yu-main-theorem} is based on the study of a discrete LQ problem associated with  the system $[A,\mathcal{B}_{\hbar}, \Lambda_\hbar]$.
    We now introduce this problem and present another equivalence criterion for the exponential stabilization of the system $[A,\mathcal{B}_{\hbar},\Lambda_{\hbar}]$.
\par
   We arbitrarily fix $\Lambda_\hbar=\{t_j\}_{j\in \mathbb N}\in \mathcal J_\hbar$,
   $(Q_j)_{j\in\mathbb N^+}\in \mathcal M_{\hbar,\gg}(H)$ and $(R_j)_{j\in\mathbb N^+}\in \mathcal M_{\hbar,\gg}(U)$.
    For each  $x_0\in H$, define the admissible control set:
\begin{equation}\label{1015-4}
\mathcal U_{ad}(x_0):=\{u=(u_j)_{j\in\mathbb N^+}\in l^2(\mathbb N^+;U):\, \left(x(t_j;x_0,u)\right)_{j\in\mathbb N^+}\in l^2(\mathbb N^+;H)\}
\end{equation}
and the cost functional:
\begin{equation}\label{1015-5-1}
\begin{split}
J(u;x_0):=\sum_{j=1}^{+\infty}\left(\langle Q_jx(t_j;x_0,u),x(t_j;x_0,u)\rangle_{H}+\langle R_ju_j,u_j\rangle_{U}\right)
\textrm{ for any }u=(u_j)_{j\in\mathbb N^+}\in \mathcal U_{ad}(x_0).
\end{split}
\end{equation}
    Consider the following optimal control problem:
\vskip 5pt
{\textbf{(I-I-LQ)}}: Given $x_0\in H$.   Find a control $u^*=(u_j^*)_{j\in\mathbb N^+}\in \mathcal U_{ad}(x_0)$ so that
\begin{equation*}\label{1015-3}
J(u^*;x_0)=\inf_{u\in\mathcal U_{ad}(x_0)}J(u;x_0).
\end{equation*}
\par
     The problem {\textbf{(I-I-LQ)}} is a kind of discrete LQ problems associated with  the system $[A,\mathcal{B}_{\hbar}, \Lambda_\hbar]$.
     For solving it, we first introduce the following operator-valued algebraic equation:
\begin{equation}\label{1014-3}
\begin{cases}
P_{k-1}-\mathrm e^{A^*(t_{k}-t_{k-1})}P_{k}\mathrm e^{A(t_{k}-t_{k-1})}
=\mathrm e^{A^*(t_{k}-t_{k-1})}Q_{k}\mathrm e^{A(t_{k}-t_{k-1})}\\
\qquad\qquad -\mathrm e^{A^*(t_{k}-t_{k-1})}P_{k}B_{k}(R_{k}+B_{k}^* P_{k}B_{k})^{-1} B^*_{k}P_{k}\mathrm e^{A(t_{k}-t_{k-1})},& k\in\{1,2,\ldots,\hbar\},\\
P_0=P_\hbar.
\end{cases}
\end{equation}
    We call  \eqref{1014-3} a Riccati-type equation, in which  unknowns $P_k$ ($k=1,\ldots, \hbar$) are in $\mathcal{SL}_{+}(H)$. The solution to \eqref{1014-3}, if exists, is denoted by $\{P_k\}_{k=0}^\hbar$.
\par
    The second main result of this paper is given as follows.
\begin{theorem}\label{thm-1}
Given $[A,\mathcal{B}_\hbar, \Lambda_\hbar]$. The following statements are equivalent:
\begin{enumerate}
\item[(i)] The system $[A,\mathcal{B}_\hbar, \Lambda_\hbar]$ is exponentially $\hbar$-stabilizable.

\item[(ii)] For each $x_0\in H$, the  set $\mathcal U_{ad}(x_0)$ is not empty.

\item[(iii)] For any $(Q_j)_{j\in\mathbb N^+}\in\mathcal M_{\hbar,\gg}(H)$ and $(R_j)_{j\in\mathbb N^+}\in\mathcal M_{\hbar,\gg}(U)$, the
Riccati-type equation \eqref{1014-3} has a unique solution.

\item[(iv)] There are $(Q_j)_{j\in\mathbb N^+}\in\mathcal M_{\hbar,\gg}(H)$ and $(R_j)_{j\in\mathbb N^+}\in\mathcal M_{\hbar,\gg}(H)$ so that the
Riccati-type equation \eqref{1014-3} has a unique solution.
\end{enumerate}
    Moreover, when the system \eqref{1014-3} is exponentially $\hbar$-stabilizable, the feedback law  $\mathcal F=\{F_k\}_{k=1}^\hbar$
can be chosen as
\begin{equation*}\label{1014-6}
 F_k:=-(R_k+B_k^*P_k B_k)^{-1} B_k^*P_k \textrm{ for each }k=1,...,\hbar,
 \end{equation*}
where $\{P_k\}_{k=0}^\hbar$ is the solution of \eqref{1014-3} with arbitrarily fixed $(Q_j)_{j\in\mathbb N^+}\in \mathcal M_{\hbar, \gg}(H)$ and
 $(R_j)_{j\in\mathbb N^+}\in \mathcal M_{\hbar, \gg}(U)$.
\end{theorem}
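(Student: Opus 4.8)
I would prove the chain of implications $(i)\Rightarrow(ii)\Rightarrow(iii)\Rightarrow(iv)\Rightarrow(i)$, the feedback formula coming out of the steps $(ii)\Rightarrow(iii)$ and $(iv)\Rightarrow(i)$. Two implications are immediate. For $(i)\Rightarrow(ii)$: if $\mathcal F=\{F_k\}_{k=1}^\hbar$ is exponentially $\hbar$-stabilizing, take $u_j:=F_{\nu(j)}x_{\mathcal F}(t_j)$; then $\|u_j\|_U\le\|F_{\nu(j)}\|\,\|x_{\mathcal F}(t_j)\|_H\le C\mathrm e^{-\mu t_j}\|x_0\|_H$, and since $t_j\to+\infty$ (because $t_{j+\hbar}-t_j=t_\hbar>0$) both $(u_j)_j$ and $(x(t_j;x_0,u))_j=(x_{\mathcal F}(t_j))_j$ lie in $l^2$, so $u\in\mathcal U_{ad}(x_0)$. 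For $(iii)\Rightarrow(iv)$ it suffices to take, e.g., $Q_j\equiv I_H$ and $R_j\equiv I_U$, which lie in $\mathcal M_{\hbar,\gg}(H)$ and $\mathcal M_{\hbar,\gg}(U)$.

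For $(ii)\Rightarrow(iii)$ I would analyze $\textbf{(I-I-LQ)}$ for arbitrarily fixed $(Q_j)_j\in\mathcal M_{\hbar,\gg}(H)$, $(R_j)_j\in\mathcal M_{\hbar,\gg}(U)$; by periodicity there is $\delta>0$ with $\langle Q_jv,v\rangle_H\ge\delta\|v\|_H^2$ and $\langle R_jw,w\rangle_U\ge\delta\|w\|_U^2$ for all $j$. Extend $J(\cdot;x_0)$ by $+\infty$ off $\mathcal U_{ad}(x_0)$; it is then coercive, strictly convex and weakly lower semicontinuous on $l^2(\mathbb N^+;U)$, and, because of the uniform positivity of $Q_j$, its effective domain is exactly $\mathcal U_{ad}(x_0)$, which is nonempty by $(ii)$. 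Hence $\textbf{(I-I-LQ)}$ has a unique optimal control $u^*(x_0)$, depending linearly on $x_0$; moreover $V(x_0):=\inf_uJ(u;x_0)$ is a nonnegative quadratic form whose square root is a seminorm on $H$, and, this seminorm being lower semicontinuous and finite-valued on a Banach space, it is bounded, so $V(x_0)=\langle P_0x_0,x_0\rangle_H$ for some $P_0\in\mathcal{SL}_+(H)$. Exploiting the $\hbar$-periodicity of $\{t_{j+1}-t_j\}$, $\{B_{\nu(j)}\}$, $\{Q_j\}$, $\{R_j\}$, the tail value function from the post-impulse instant $t_j^+$ depends only on $\nu(j)$, giving $P_1,\dots,P_\hbar\in\mathcal{SL}_+(H)$ with $P_\hbar=P_0$; completing the square in the one-step Bellman equation — legitimate since $R_k+B_k^*P_kB_k$ is invertible with $\langle(R_k+B_k^*P_kB_k)w,w\rangle_U\ge\delta\|w\|_U^2$ — yields exactly \eqref{1014-3} and identifies the minimizer as $u_k=-(R_k+B_k^*P_kB_k)^{-1}B_k^*P_kx(t_k)$, i.e. the stated feedback $F_k$. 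For uniqueness I would use a verification argument: for any solution $\{\widehat P_k\}$ of \eqref{1014-3}, the completion-of-squares (telescoping) identity gives $J(u;x_0)\ge\langle\widehat P_0x_0,x_0\rangle_H$ for every admissible $u$ (the boundary term at $N\to\infty$ is discarded along a subsequence on which $\|x(t_N^+;x_0,u)\|_H\to0$, available since $\ell^2$-summability holds, using $\widehat P_k\in\mathcal{SL}_+(H)$), while the $\widehat F$-feedback control gives $J(u^{\widehat F};x_0)\le\langle\widehat P_0x_0,x_0\rangle_H$; hence $\langle\widehat P_0x_0,x_0\rangle_H=V(x_0)$, so by polarization every solution has the same $P_\hbar=P_0$, and since \eqref{1014-3} generates $P_{k-1}$ explicitly from $P_k$, the whole solution is uniquely determined.

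For $(iv)\Rightarrow(i)$ I would build a discrete Lyapunov functional from any solution $\{P_k\}_{k=0}^\hbar$ of \eqref{1014-3} (for the given $Q,R$) and $F_k:=-(R_k+B_k^*P_kB_k)^{-1}B_k^*P_k$. Substituting $F_k$ into \eqref{1014-3} and using $B_k^*P_kB_k=(R_k+B_k^*P_kB_k)-R_k$ gives
\[
\bigl((I+B_kF_k)\mathrm e^{A(t_k-t_{k-1})}\bigr)^{*}P_k\bigl((I+B_kF_k)\mathrm e^{A(t_k-t_{k-1})}\bigr)=P_{k-1}-\mathrm e^{A^{*}(t_k-t_{k-1})}\bigl(Q_k+F_k^{*}R_kF_k\bigr)\mathrm e^{A(t_k-t_{k-1})}.
\]
With $x_{\mathcal F}(\cdot)$ a solution of \eqref{Intro-4(2)} and $V_j:=\langle P_{\nu(j)}x_{\mathcal F}(t_j^+),x_{\mathcal F}(t_j^+)\rangle_H$ (and $V_0:=\langle P_0x_{\mathcal F}(0),x_{\mathcal F}(0)\rangle_H$), since $x_{\mathcal F}(t_{j+1})=\mathrm e^{A(t_{j+1}-t_j)}x_{\mathcal F}(t_j^+)$ this identity yields $V_j-V_{j+1}=\langle(Q_{\nu(j+1)}+F_{\nu(j+1)}^{*}R_{\nu(j+1)}F_{\nu(j+1)})x_{\mathcal F}(t_{j+1}),x_{\mathcal F}(t_{j+1})\rangle_H\ge\delta\|x_{\mathcal F}(t_{j+1})\|_H^2$. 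Using $\|x_{\mathcal F}(t_j^+)\|_H\le C\|x_{\mathcal F}(t_j)\|_H$ and $V_j\le(\max_k\|P_k\|)\|x_{\mathcal F}(t_j^+)\|_H^2$ this gives $V_j-V_{j+1}\ge cV_{j+1}$ for some $c>0$, so $(V_j)$ decays geometrically; telescoping the pointwise inequality then gives $V_j\ge\delta\sum_{i>j}\|x_{\mathcal F}(t_i)\|_H^2$, whence $\|x_{\mathcal F}(t_j)\|_H$ decays geometrically in $j$, and the crude bound $\|x_{\mathcal F}(t)\|_H\le\bigl(\sup_{0\le s\le t_\hbar}\|\mathrm e^{As}\|\bigr)\|x_{\mathcal F}(t_j^+)\|_H$ for $t\in(t_j,t_{j+1}]$ together with the linear growth of $t_j$ in $j$ converts this to exponential decay in $t$. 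The feedback exhibited here is the one in the last assertion of the theorem.

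The step I expect to be the main obstacle is $(ii)\Rightarrow(iii)$: showing that $V$ is a \emph{bounded} quadratic form (the lower‑semicontinuity/Baire step, which is exactly where the hypothesis "$\mathcal U_{ad}(x_0)\neq\emptyset$ for every $x_0$" is genuinely used), and then pushing the Bellman identification and the verification/uniqueness argument through in infinite dimensions — in particular handling the terms at infinity with only $\ell^2$-summability (hence decay along a subsequence) of $(x(t_j;x_0,u))_j$ rather than genuine decay, and respecting that $\{\mathrm e^{At}\}$ is merely a $C_0$-semigroup, so no bound of the form $\mathrm e^{A^{*}t}Q\,\mathrm e^{At}\ge\delta' I$ is available; this is precisely why, in $(iv)\Rightarrow(i)$, the Lyapunov decrement must be read off at the pre‑impulse state $x_{\mathcal F}(t_{j+1})$ rather than at $x_{\mathcal F}(t_j^+)$.
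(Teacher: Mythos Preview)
Your proposal is correct and follows the same chain $(i)\Rightarrow(ii)\Rightarrow(iii)\Rightarrow(iv)\Rightarrow(i)$ as the paper, with the same verification/completion-of-squares argument for uniqueness in $(ii)\Rightarrow(iii)$. There are, however, two places where your route genuinely differs from the paper's.

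\emph{Boundedness of $V$ in $(ii)\Rightarrow(iii)$.} The paper does not argue via a lower-semicontinuous seminorm and Baire. Instead it approximates the infinite-horizon problem $\textbf{(I-I-LQ)}_\ell$ by finite-horizon problems $\textbf{(F-I-LQ)}_{\ell,k}$ with terminal cost $M=0$; for those, Proposition~\ref{thm-filq} gives $V(x_0;\ell,k)=\langle P_\ell^k x_0,x_0\rangle_H$ explicitly, and then the pointwise bound $\|(P_\ell^k)^{1/2}x_0\|_H\le V(x_0;\ell)^{1/2}$ combined with the uniform boundedness theorem yields $\sup_k\|P_\ell^k\|<\infty$. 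The parallelogram law for $V(\cdot;\ell)$ is obtained as a limit of the trivially true parallelogram laws for $V(\cdot;\ell,k)$, and Kurepa's theorem is invoked to produce $P_\ell$. Your Baire route is shorter, but it rests on the lower semicontinuity of $V$, which is \emph{not} automatic (an infimum of continuous functions is upper, not lower, semicontinuous); you should make explicit the weak-compactness argument behind it: from $x_0^n\to x_0$ and bounded $V(x_0^n)$ one extracts weak limits of the optimal controls and states and uses weak lower semicontinuity of the positive quadratic forms $\langle Q_j\cdot,\cdot\rangle$, $\langle R_j\cdot,\cdot\rangle$. The paper's approach buys a constructive approximation scheme (and the finite-horizon machinery is reused elsewhere); yours buys brevity.

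\emph{$(iv)\Rightarrow(i)$.} The paper does not run a Lyapunov recursion. It first shows, by plugging the feedback control into the infinite-horizon cost and using $Q_j\ge qI$, that the one-period map $\Phi_\hbar$ satisfies $\sum_{k\ge1}\|(\Phi_\hbar)^k x_0\|_H^2\le C\|x_0\|_H^2$, and then extracts a contraction via $k\|(\Phi_\hbar)^k x_0\|^2\le C^2\|x_0\|^2$, whence $\|(\Phi_\hbar)^k\|\le 1/2$ for large $k$. Your one-step Lyapunov identity $V_j-V_{j+1}\ge\delta\|x_{\mathcal F}(t_{j+1})\|_H^2$ together with $V_{j+1}\le(\max_k\|P_k\|)\,\|I+B_{\nu(j+1)}F_{\nu(j+1)}\|^2\|x_{\mathcal F}(t_{j+1})\|_H^2$ gives geometric decay of $V_j$ directly, which is arguably cleaner. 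A minor remark: in the uniqueness step you only need a \emph{subsequence} with $x(t_N^+)\to0$, but in fact the full sequence converges to $0$ since $(x(t_j))_j\in l^2$ and $(u_j)_j\in l^2$ force $x(t_j^+)=x(t_j)+B_{\nu(j)}u_j\to0$, exactly as the paper uses.
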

\subsection{Novelties of this paper}
\begin{itemize}
  \item As an important control system, there is very little literature  considering
    the stabilization problem with impulse controls in infinite dimensional setting.
    In essence, the controllers in the impulse control system are time-varying. Thus, the classic strategies (for instance, frequency domain method) do not work.
     Indeed, Theorem \ref{yu-main-theorem} can be regarded as a time domain method (for the time domain method in continous control system, one can refer to \cite{T-W-X,L-W-X-Y}).
  \item It seems for us that the equivalence characterization on the exponential stabilization for impulse control systems in infinite dimensional setting has not been touched upon (at least, we do not find any such literature). Thus, the equivalence results in Theorem \ref{yu-main-theorem} and Theorem \ref{thm-1} are new.
  \item In  infinite dimensional setting, the discrete LQ problem used in our paper seems new for us. Although many literatures used LQ problem to consider the stabilizability of infinite dimensional systems (for instance, \cite{Flandoli-Lasiecka-Triggiani,Lasiecka-Triggiani}), but our LQ problem is discrete. It is  completely different from continuous time horizon LQ problems.
\end{itemize}

\subsection{Plan of this paper}
    The rest of this paper is organized as follows: In section 2, we present some preliminaries. In section 3, we give the proof of our main results. In Section 4, we apply our main results to the system of heat equations coupled by constant matrices with impulse controls.
\section{Preliminaries}

\subsection{Finite horizon LQ problem}\label{yu-sec-2.1}
     Given $\Lambda_{\hbar}=\{t_j\}_{j\in\mathbb{N}}\in\mathcal{J}_{\hbar}$. We arbitrarily fix $\mathcal (Q_j)_{j\in\mathbb N^+}\in\mathcal M_{\hbar, +}(H)$ and
 $(R_j)_{j\in\mathbb N^+}\in \mathcal M_{\hbar, \gg}(U)$  and $M\in \mathcal{SL}_{+}(H)$.
We also fix a terminal instant $t_{\widehat{k}}$ with $\widehat{k}\in\mathbb N^+$.
For each $\ell\in\mathbb{N}$ and any $x_0\in H$,  we denote by  $x(\cdot;x_0,v,\ell)$  the solution of the following system:
\begin{equation}\label{1014-8}
\begin{cases}
  \displaystyle x'(t)=Ax(t),   & t\in\mathbb R^+\backslash \Lambda_\hbar,   \\
   x(t_j^+)=x(t_j)+B_{\nu(j)} v_j, & j>\ell,\\
  x(t_\ell^+)=x_0,
\end{cases}
\end{equation}
where  $v=(v_{j+\ell})_{j\in\mathbb{N}^+}\in l^2(\mathbb{N}^+;U)$ is the control. Here and in what follows, for a given Hilbert space $V$, $(f_{j+\ell})_{j\in\mathbb{N}^+}\in l^2(\mathbb{N}^+;V)$ means that, if we let
$g_j:=f_{j+\ell}$ for each $j\in\mathbb{N}^+$, then $(g_j)_{j\in\mathbb{N}^+}\in l^2(\mathbb{N}^+;V)$.
\par
 Given $x_0\in H$. For each $\ell\in\mathbb{N}$ with
$\ell<\widehat{k}$, we define
\begin{equation}\label{1014-7-1}
\begin{split}
J(v;x_0,\ell,\widehat{k}):=&\sum_{j=\ell+1}^{\widehat{k}}\left(\langle Q_j x(t_j;x_0,v,\ell), x(t_j;x_0,v,\ell)\rangle_H+\langle R_j v_j,v_j\rangle_U\right)\\
&\quad+\langle Mx(t_{\widehat{k}}^+;x_0,v,\ell),x(t_{\widehat{k}}^+;x_0,v,\ell)\rangle_H\textrm{ for each } v=(v_{j+\ell})_{j\in \mathbb{N}^+}\in l^2(\mathbb{N}^+;U).
\end{split}
\end{equation}
It should be noted that, in the definition of $J(v;x_0,\ell,\widehat{k})$,  the effective
    controls are $\{v_{j}\}_{j=\ell+1}^{\widehat{k}}$.
\par
   Now, we consider the following finite horizon LQ problem associated with  the system $[A,\mathcal{B}_{\hbar}, \Lambda_\hbar]$:
\vskip 5pt
{\textbf{(F-I-LQ)$_{\ell,\widehat{k}}$}}: Given $x_0\in H$, find a control $v(\ell,\widehat{k})=(v_{j+\ell}(\ell,\widehat{k}))_{j\in\mathbb{N}^+}\in l^2(\mathbb{N}^+;U)$ so that
\begin{equation*}\label{1014-7}
 J(v(\ell,\widehat{k});x_0,\ell,\widehat{k})=\inf_{v=(v_{j+\ell})\in l^2(\mathbb{N}^+;U)} J(v;x_0,\ell,\widehat{k}).
\end{equation*}
    Since $(R_j)_{j\in\mathbb N^+}\in \mathcal M_{\hbar, \gg}(U)$, by some standard arguments, one can easily show that
\begin{enumerate}
  \item [(a1)] {\textbf{(F-I-LQ)$_{\ell,\widehat{k}}$}} has an optimal control for any $x_0\in H$;
  \item [(a2)] $(v_{\ell+1}(\ell,\widehat{k}),\cdots,v_{\widehat{k}}(\ell,\widehat{k}))$ (called the restriction of $v(\ell,\widehat{k})$ on $[\ell+1,\widehat{k}]$) is unique
      for any $x_0\in H$.
\end{enumerate}

\par Fix $\ell\in\mathbb{N}$ with $\ell<\widehat k$. Let
\begin{equation}\label{1014-9}
V(x_0;\ell,\widehat{k}):=\inf_{v=(v_{j+\ell})\in l^2(\mathbb{N}^+;U)} J(u;x_0,\ell,\widehat{k})\;\;\mbox{for any}\;\;x_0\in H
\end{equation}
 and $\{P_j^{\widehat{k}}\}_{j=1}^{\widehat{k}}$ be the solution of the following Riccati-type equation:
\begin{equation}\label{1014-10}
\begin{cases}
P^{\widehat{k}}_{j-1}-\mathrm e^{A^*(t_{j}-t_{j-1})}P^{\widehat{k}}_{j}\mathrm e^{A(t_{j}-t_{j-1})}
=\mathrm e^{A^*(t_{j}-t_{j-1})}Q_{j}\mathrm e^{A(t_{j}-t_{j-1})}\\
\qquad\qquad -\mathrm e^{A^*(t_{j}-t_{j-1})}P^{\widehat{k}}_{j}B_{\nu(j)}(R_{j}+B_{\nu(j)}^* P^{\widehat{k}}_{j}B_{\nu(j)})^{-1} B_{\nu(j)}^*P^{\widehat{k}}_{j}\mathrm e^{A(t_{j}-t_{j-1})},&j\in\{1,2,\ldots,\widehat{k}\},\\
P^{\widehat{k}}_{\widehat{k}}=M.
\end{cases}
\end{equation}
\begin{remark}\label{yu-reamrk-4-7-1}
    Since $\mathcal (Q_j)_{j\in\mathbb N^+}\in\mathcal M_{\hbar, +}(H)$ and
 $(R_j)_{j\in\mathbb N^+}\in \mathcal M_{\hbar, \gg}(U)$  and $M\in\mathcal{SL}_+(H)$, it is clear that, for any $x_0\in H$,
$$
    J(v;x_0,\ell,\widehat{k})\geq 0\;\;\mbox{for all}\;\;v\in l^2(\mathbb{N}^+;U).
$$
    This implies that $V(x_0;\ell,\widehat{k})\geq 0$ for all $x_0\in H$. Moreover, since $M\in\mathcal{SL}_+(H)$, we can conclude that the equation (\ref{1014-10}) has a unique solution $\{P_j^{\widehat{k}}\}_{j=1}^{\widehat{k}}$ satisfying $P_j^{\widehat{k}}=
    (P_j^{\widehat{k}})^*$ for each $j=1,2,\ldots, \widehat{k}$.
\end{remark}

\par
     Next, we state the relationship between the optimal control problem {\textbf{(F-I-LQ)$_{\ell,\widehat{k}}$}}
and the solution of the Riccati-type equation \eqref{1014-10}.
\begin{proposition}\label{thm-filq}
For each $\ell\in\mathbb N$ with $\ell< \widehat{k}$, it stands that
\begin{equation}\label{1014-11}
V(x_0;\ell,\widehat{k})=\langle P_\ell^{\widehat{k}}x_0,x_0\rangle_H\;\;\mbox{for any}\;\;x_0\in H.
\end{equation}
%{\color{red}Moreover, for any $x_0\in H$, if the optimal control of {\textbf{(F-I-LQ)}}$_{\ell,\widehat{k}}$ with respect to the
%initial state $x_0$ is $v(\ell,\widehat{k})=(v_{j+\ell}(\ell,\widehat{k}))_{j\in\mathbb{N}^+}$, then
%\begin{equation}\label{1014-17}
%v_{j}(\ell,\widehat{k})=-(R_j+B_j^*P^{\widehat{k}}_jB_j)^{-1}B_j^*
%P^{\widehat{k}}_jx(t_j;x_0,v(\ell,\widehat{k}),\ell),
%\mbox{for each}\,j\in\{\ell+1,\ell+2,\ldots, \widehat{k}\}.
%\end{equation}}
\end{proposition}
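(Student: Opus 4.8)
\emph{Proof idea.} The plan is to prove the value formula \eqref{1014-11} by backward induction on $\ell$, running from $\ell=\widehat k-1$ down to $\ell=0$, via the dynamic programming principle for {\textbf{(F-I-LQ)$_{\ell,\widehat k}$}}. Along the induction I would also carry the auxiliary claim that $P^{\widehat k}_\ell\in\mathcal{SL}_+(H)$. This is convenient for two reasons: it is what makes each operator $R_{\ell+1}+B^*_{\nu(\ell+1)}P^{\widehat k}_{\ell+1}B_{\nu(\ell+1)}$ boundedly invertible (it dominates $\delta I$ for some $\delta>0$ because $(R_j)_{j\in\mathbb N^+}\in\mathcal M_{\hbar,\gg}(U)$), so that the completion-of-squares step below is legitimate; and it re-derives the positivity of the solution of \eqref{1014-10} asserted in Remark~\ref{yu-reamrk-4-7-1} as a byproduct.

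First I would record the elementary facts about \eqref{1014-8}: for $j>\ell$ one has $x(t_j;x_0,v,\ell)=\mathrm e^{A(t_j-t_{j-1})}\big(x(t_{j-1};x_0,v,\ell)+B_{\nu(j-1)}v_{j-1}\big)$ with the convention $x(t_\ell;x_0,v,\ell)+B_{\nu(\ell)}v_\ell:=x_0$, and that the functional $J(v;x_0,\ell,\widehat k)$ in \eqref{1014-7-1} depends only on the finite block $(v_{\ell+1},\dots,v_{\widehat k})$. Hence the infimum in \eqref{1014-9} may be taken over $U^{\widehat k-\ell}$ (a finite block extended by zeros lies in $l^2(\mathbb N^+;U)$), it is attained by (a1), and for $\ell<\widehat k-1$ it satisfies the splitting identity
\[
V(x_0;\ell,\widehat k)=\min_{w\in U}\Big[\big\langle Q_{\ell+1}\mathrm e^{A(t_{\ell+1}-t_\ell)}x_0,\mathrm e^{A(t_{\ell+1}-t_\ell)}x_0\big\rangle_H+\langle R_{\ell+1}w,w\rangle_U+V\big(\mathrm e^{A(t_{\ell+1}-t_\ell)}x_0+B_{\nu(\ell+1)}w;\ell+1,\widehat k\big)\Big],
\]
together with the terminal identity $V(x_0;\widehat k-1,\widehat k)=\min_{w\in U}\big[\langle Q_{\widehat k}\xi,\xi\rangle_H+\langle R_{\widehat k}w,w\rangle_U+\langle M(\xi+B_{\nu(\widehat k)}w),\xi+B_{\nu(\widehat k)}w\rangle_H\big]$ with $\xi:=\mathrm e^{A(t_{\widehat k}-t_{\widehat k-1})}x_0$.

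For the base case $\ell=\widehat k-1$ I would insert $P^{\widehat k}_{\widehat k}=M$, put $S:=R_{\widehat k}+B^*_{\nu(\widehat k)}MB_{\nu(\widehat k)}\in\mathcal{SL}_{\gg}(U)$ (hence invertible), and minimize the quadratic $w\mapsto\langle Sw,w\rangle_U+2\langle B^*_{\nu(\widehat k)}M\xi,w\rangle_U+\langle M\xi,\xi\rangle_H$ explicitly; its minimizer is $w^*=-S^{-1}B^*_{\nu(\widehat k)}M\xi$ and its value is $\langle(M-MB_{\nu(\widehat k)}S^{-1}B^*_{\nu(\widehat k)}M)\xi,\xi\rangle_H\ge 0$ (nonnegativity because the quadratic itself is nonnegative, which also gives $M-MB_{\nu(\widehat k)}S^{-1}B^*_{\nu(\widehat k)}M\in\mathcal{SL}_+(H)$). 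Adding $\langle Q_{\widehat k}\xi,\xi\rangle_H$ and conjugating by $\mathrm e^{A(t_{\widehat k}-t_{\widehat k-1})}$ produces exactly $\langle P^{\widehat k}_{\widehat k-1}x_0,x_0\rangle_H$ by the $j=\widehat k$ instance of \eqref{1014-10}, and the same computation yields $P^{\widehat k}_{\widehat k-1}\in\mathcal{SL}_+(H)$. The inductive step is word-for-word the same after replacing $M$ by $P^{\widehat k}_{\ell+1}$ (allowed by the carried positivity claim and the $\mathcal M_{\hbar,\gg}(U)$ hypothesis on $R$), feeding the induction hypothesis $V(\,\cdot\,;\ell+1,\widehat k)=\langle P^{\widehat k}_{\ell+1}\,\cdot\,,\cdot\,\rangle_H$ into the splitting identity, completing the square in $w$, and reading off the $j=\ell+1$ instance of \eqref{1014-10}.

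The only genuinely non-routine point is the justification of the splitting identity, i.e.\ Bellman's principle: for frozen $v_{\ell+1}=w$, the quantity $J(v;x_0,\ell,\widehat k)$ equals the stage cost $\langle Q_{\ell+1}x(t_{\ell+1}),x(t_{\ell+1})\rangle_H+\langle R_{\ell+1}w,w\rangle_U$ plus $J(\,\cdot\,;x(t_{\ell+1}^+;x_0,v,\ell),\ell+1,\widehat k)$ evaluated at the tail $(v_{\ell+2},\dots,v_{\widehat k})$; minimizing first over the tail and then over $w$ gives the right-hand side of the identity, and minimizing over the whole block at once gives the left-hand side. Because only finitely many controls are effective, this reduces to an elementary interchange of minima over $U$-factors (no measurable-selection argument is needed), and the attainment statements (a1)--(a2) make it fully rigorous. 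Everything else in the proof is the standard LQ completion of squares, so I do not expect any further functional-analytic obstacle.
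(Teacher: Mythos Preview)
Your proposal is correct and follows essentially the same approach as the paper: backward induction on $\ell$ from $\widehat k-1$, Bellman's one-step splitting of the cost, and completion of the square at each stage to identify the value with $\langle P^{\widehat k}_\ell x_0,x_0\rangle_H$ via the recursion \eqref{1014-10}. The only organizational difference is that you carry $P^{\widehat k}_\ell\in\mathcal{SL}_+(H)$ along the induction to justify invertibility of $R_{\ell+1}+B^*_{\nu(\ell+1)}P^{\widehat k}_{\ell+1}B_{\nu(\ell+1)}$, whereas the paper asserts well-posedness of \eqref{1014-10} in Remark~\ref{yu-reamrk-4-7-1} and records positivity separately as Corollary~\ref{yu-corollary-4-7-2}; your packaging is slightly cleaner but not materially different.
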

\begin{proof}
Let $x_0\in H$ be arbitrarily fixed.
The proof is given by induction.
\vskip 5pt
{\it Step 1. We show that \eqref{1014-11} stands for $\ell=\widehat{k}-1$.}
\par
In fact, by \eqref{1014-7-1}, one can easily check that, for any $v=(v_{j+\widehat{k}-1})_{j\in\mathbb{N}^+}\in l^2(\mathbb{N}^+;U)$,
\begin{equation}\label{1014-12}
\begin{split}
J(v;x_0,\ell,\widehat{k})=&J(v;x_0,\widehat{k}-1,\widehat{k})\\
=&\langle Q_{\widehat{k}} x(t_{\widehat{k}};x_0,v,\widehat{k}-1), x(t_{\widehat{k}};x_0,v,\widehat{k}-1)\rangle_H+\langle R_{\widehat{k}} v_{\widehat{k}},v_{\widehat{k}}\rangle_U\\
&+\langle Mx(t_{\widehat{k}}^+;x_0,v,\widehat{k}-1),x(t_{\widehat{k}}^+;x_0,v,\widehat{k}-1)\rangle_H.
\end{split}
\end{equation}
   By (\ref{1014-8}), it is obvious that
\begin{equation}\label{1014-13}
x(t_{\widehat{k}};x_0,v,\widehat{k}-1)=\mathrm e^{A(t_{\widehat{k}}-t_{\widehat{k}-1})}x_0 \textrm{ and }x(t_{\widehat{k}}^+;x_0,u,\widehat{k}-1)=\mathrm e^{A(t_{\widehat{k}}-t_{\widehat{k}-1})}x_0+B_{\nu(\widehat{k})} u_{\widehat{k}}.
\end{equation}
    Thus, by (\ref{1014-12}) and (\ref{1014-13}), we have
\begin{eqnarray}\label{1014-14}
J(v;x_0,\widehat{k}-1,\widehat{k})&=&\langle(R_{\widehat{k}}+B_{\nu(\widehat{k})}^*
MB_{\nu(\widehat{k})})v_{\widehat{k}},v_{\widehat{k}}\rangle_U+2 \langle v_{\widehat{k}},B_{\nu(\widehat{k})}^*M\mathrm e^{A(t_{\widehat{k}}-t_{{\widehat{k}}-1})}x_0 \rangle_U\nonumber\\
&&+\langle (Q_{\widehat{k}}+M) \mathrm e^{A(t_{\widehat{k}}-t_{{\widehat{k}}-1})}x_0,\mathrm e^{A(t_{\widehat{k}}-t_{{\widehat{k}}-1})}x_0 \rangle_H\nonumber\\
&=&\|(R_{\widehat{k}}+B_{\nu(\widehat{k})}^*MB_{\nu(\widehat{k})})^{\frac{1}{2}}
(v_{\widehat{k}}+(R_{\widehat{k}}+B_{\nu(\widehat{k})}^*M
B_{\nu(\widehat{k})})^{-1}B_{\nu(\widehat{k})}^*M\mathrm e^{A(t_{\widehat{k}}-t_{\widehat{k}-1})}x_0)\|_U^2\nonumber\\
&&-\langle( R_{\widehat{k}}+B_{\nu(\widehat{k})}^*MB_{\nu(\widehat{k})} )^{-1}B_{\nu(\widehat{k})} ^*M\mathrm e^{A(t_{\widehat{k}}-t_{\widehat{k}-1})}x_0,B_{\nu(\widehat{k})}^*M\mathrm e^{A(t_{\widehat{k}}-t_{\widehat{k}-1})}x_0\rangle_U\nonumber\\
&&+\langle (Q_{\widehat{k}}+M) \mathrm e^{A(t_{\widehat{k}}-t_{\widehat{k}-1})}x_0,\mathrm e^{A(t_{\widehat{k}}-t_{\widehat{k}-1})}x_0 \rangle_H.
\end{eqnarray}
   It follows that
\begin{eqnarray}\label{yu-2-25-1}
    &\;&J(v;x_0,\widehat{k}-1,\widehat{k})\nonumber\\
    &\geq&-\langle(R_{\widehat{k}}+B_{\nu(\widehat{k})} ^*
    MB_{\nu(\widehat{k})})^{-1}B_{\nu(\widehat{k})} ^*M\mathrm e^{A(t_{\widehat{k}}-t_{\widehat{k}-1})}x_0,B_{\nu(\widehat{k})} ^*M\mathrm e^{A(t_{\widehat{k}}-t_{\widehat{k}-1})}x_0\rangle_U\nonumber\\
&\;&+\langle (Q_{\widehat{k}}+M) \mathrm e^{A(t_{\widehat{k}}-t_{\widehat{k}-1})}x_0,\mathrm e^{A(t_{\widehat{k}}-t_{\widehat{k}-1})}x_0 \rangle_H\;\;\mbox{for any}\;\;v=(v_{j+\widehat{k}-1})_{j\in\mathbb{N}^+}\in l^2(\mathbb{N}^+;U).
\end{eqnarray}
    Taking $v^*=(v_{j+\widehat{k}-1})_{j\in\mathbb{N}^+}\in l^2(\mathbb{N}^+; U)$ with
\begin{eqnarray*}\label{yu-2-25-3}
    v_{\widehat{k}}&=&-(R_{\widehat{k}}+B_{\nu(\widehat{k})} ^*MB_{\nu(\widehat{k})} )^{-1}
    B_{\nu(\widehat{k})} ^*M\mathrm e^{A(t_{\widehat{k}}-t_{\widehat{k}-1})}x_0\nonumber\\
    &=&-(R_{\widehat{k}}+B_{\nu(\widehat{k})} ^*MB_{\nu(\widehat{k})} )^{-1}
    B_{\nu(\widehat{k})} ^*P_{\widehat{k}}^{\widehat{k}}x(t_{\widehat{k}};x_0,v,\widehat{k}-1)
\end{eqnarray*}
     into (\ref{1014-14}), by (\ref{yu-2-25-1}), we get
\begin{eqnarray}\label{yu-2-25-2}
    J(v^*;x_0,\widehat{k}-1,\widehat{k})&=&-\langle(R_{\widehat{k}}
    +B_{\nu(\widehat{k})} ^*MB_{\nu(\widehat{k})} )^{-1}B_{\nu(\widehat{k})} ^*M\mathrm e^{A(t_{\widehat{k}}-t_{\widehat{k}-1})}x_0,B_{\nu(\widehat{k})} ^*M\mathrm e^{A(t_{\widehat{k}}-t_{\widehat{k}-1})}x_0\rangle_U\nonumber\\
&\;&+\langle (Q_{\widehat{k}}+M) \mathrm e^{A(t_{\widehat{k}}-t_{\widehat{k}-1})}x_0,\mathrm e^{A(t_{\widehat{k}}-t_{\widehat{k}-1})}x_0 \rangle_H.
\end{eqnarray}
    By (\ref{yu-2-25-1}) and (\ref{yu-2-25-2}), we can conclude that
\begin{eqnarray*}\label{1014-16}
V(x_0;\widehat{k}-1,\widehat{k})&=&J(v^*;x_0,\widehat{k}-1,\widehat{k})=\langle (Q_{\widehat{k}}+M) \mathrm e^{A(t_{\widehat{k}}-t_{\widehat{k}-1})}x_0,\mathrm e^{A(t_{\widehat{k}}-t_{\widehat{k}-1})}x_0 \rangle_H\nonumber\\
&\;&-\langle(R_{\widehat{k}}+B_{\nu(\widehat{k})} ^*MB_{\nu(\widehat{k})} )^{-1}B_{\nu(\widehat{k})}^*M\mathrm e^{A(t_{\widehat{k}}-t_{\widehat{k}-1})}x_0,
B_{\nu(\widehat{k})}^*M\mathrm e^{A(t_{\widehat{k}}-t_{\widehat{k}-1})}x_0\rangle_U.
\end{eqnarray*}
This, together with (\ref{1014-10}), implies that \eqref{1014-11} stands for $\ell=\widehat{k}-1$.
\vskip 5pt
{\it Step 2. Let $\ell'\in\mathbb N$ with $\ell'<\widehat{k}-1$ be fixed arbitrarily.  Suppose that for $\ell=\ell'+1,\ell'+2,\ldots,\widehat{k}-1$, \eqref{1014-11} stands.
We prove that \eqref{1014-11} holds for $\ell=\ell'$.}

In fact, for each $v=(v_{j+\ell'})_{j\in\mathbb{N}^+}\in l^2(\mathbb{N}^+;U)$, we have
\begin{eqnarray*}\label{1014-18}
J(v;x_0,\ell',\widehat{k})&=&\sum_{j=\ell'+1}^{\widehat{k}}\left(\langle Q_j x(t_j;x_0,v,\ell'), x(t_j;x_0,v,\ell')\rangle_H+\langle R_j v_j,v_j\rangle_U\right)\nonumber\\
&&\quad+\langle Mx(t_{\widehat{k}}^+;x_0,v,\ell'),x(t_{\widehat{k}}^+;x_0,v,\ell')\rangle_H\nonumber\\
&=& \left\langle Q_{\ell'+1} x(t_{\ell'+1};x_0,v,\ell'), x(t_{\ell'+1};x_0,v,\ell')\right\rangle_H+\left\langle R_{\ell'+1}v_{\ell'+1}, v_{\ell'+1}\right\rangle_U\\
&&\quad +J\left(v';x(t^+_{\ell'+1};v,x_0,\ell'),\ell'+1,\widehat{k}\right),\nonumber
\end{eqnarray*}
where $v':=(v_{j+\ell'+1})_{j\in\mathbb{N}^+}\in l^2(\mathbb{N}^+;U)$.
This, along with the assumption that  \eqref{1014-11}  stands for $\ell=\ell'+1$, shows that, for any $v=(v_{j+\ell'})_{j\in\mathbb N^+}\in l^2(\mathbb{N}^+;U)$,
\begin{eqnarray*}
&&J(v;x_0,\ell',\widehat{k})\\
&\geq& \langle Q_{\ell'+1} x(t_{\ell'+1};x_0,v,\ell'), x(t_{\ell'+1};x_0,v,\ell')\rangle_H+\langle R_{\ell'+1}v_{\ell'+1}, v_{\ell'+1}\rangle_U\\
&&+\langle P^{\widehat{k}}_{\ell'+1}x(t^+_{\ell'+1};v,x_0,\ell'), x(t^+_{\ell'+1};v,x_0,\ell')\rangle_H.
\end{eqnarray*}
Then by the same arguments as those in \emph{Step 1}, we obtain that
\begin{equation}\label{1014-19}
J(v;x_0,\ell',\widehat{k})\geq \langle P_{\ell'}^{\widehat{k}} x_0,x_0\rangle \;\;\mbox{for any}\;\;
v=(v_{j+\ell'})_{j\in\mathbb{N}^+}\in l^2(\mathbb{N}^+;U),
\end{equation}
and when we take $v=(v_{j+\ell'})_{j\in\mathbb{N}^+}\in l^2(\mathbb{N}^+;U)$ with
\begin{equation*}\label{1014-20}
v_j=-(R_j+B_jP^{\widehat{k}}_jB_j)^{-1}B_j^*P^{\widehat{k}}_jx(t_j;v,x_0,\ell') \;\;\mbox{for each}\;\; j=\ell'+1,...,\widehat{k}
\end{equation*}
into (\ref{1014-19}), the inequality (\ref{1014-19}) achieves  an equality.
\par
    We complete the proof.
\end{proof}
    By Remark \ref{yu-reamrk-4-7-1} and Proposition \ref{thm-filq}, we have the following corollary:
\begin{corollary}\label{yu-corollary-4-7-2}
    The solution $\{P_j^{\widehat{k}}\}_{j=1}^{\widehat{k}}$ of the Riccati-type equation
     (\ref{1014-10}) belongs to $\mathcal{SL}_{+}(H)$.
\end{corollary}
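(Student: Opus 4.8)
The plan is to combine the two ingredients stated immediately before the corollary: the non-negativity of the value function recorded in Remark \ref{yu-reamrk-4-7-1}, and the identification of this value function with the Riccati solution given by Proposition \ref{thm-filq}. Concretely, I would fix $\widehat{k}\in\mathbb{N}^+$ and verify membership in $\mathcal{SL}_+(H)$ index by index for $j=1,\ldots,\widehat{k}$.

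For each $\ell\in\{1,2,\ldots,\widehat{k}-1\}$, I would invoke Proposition \ref{thm-filq} to write $\langle P_\ell^{\widehat{k}}x_0,x_0\rangle_H = V(x_0;\ell,\widehat{k})$ for every $x_0\in H$. Since $(Q_j)_{j\in\mathbb{N}^+}\in\mathcal{M}_{\hbar,+}(H)$, $(R_j)_{j\in\mathbb{N}^+}\in\mathcal{M}_{\hbar,\gg}(U)$ and $M\in\mathcal{SL}_+(H)$, the cost $J(v;x_0,\ell,\widehat{k})$ is non-negative for all $v\in l^2(\mathbb{N}^+;U)$, whence $V(x_0;\ell,\widehat{k})\geq 0$ as observed in Remark \ref{yu-reamrk-4-7-1}. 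Therefore $\langle P_\ell^{\widehat{k}}x_0,x_0\rangle_H\geq 0$ for all $x_0\in H$. Combining this with the fact, also recorded in Remark \ref{yu-reamrk-4-7-1}, that $\{P_j^{\widehat{k}}\}_{j=1}^{\widehat{k}}$ exists, is unique, and satisfies $P_j^{\widehat{k}}=(P_j^{\widehat{k}})^*$ (so in particular each $P_\ell^{\widehat{k}}\in\mathcal{L}(H)$ is self-adjoint), we get $P_\ell^{\widehat{k}}\in\mathcal{SL}_+(H)$ for every such $\ell$. For the remaining index $\ell=\widehat{k}$, I would simply use the terminal condition in \eqref{1014-10}, namely $P_{\widehat{k}}^{\widehat{k}}=M$, which lies in $\mathcal{SL}_+(H)$ by hypothesis. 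This exhausts all $j\in\{1,\ldots,\widehat{k}\}$ and completes the proof.

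I do not expect a genuine obstacle here: the statement is essentially a bookkeeping consequence of Proposition \ref{thm-filq}. The only points deserving a line of care are (i) matching the index range of the corollary ($j=1,\ldots,\widehat{k}$) with that of Proposition \ref{thm-filq} ($\ell=0,\ldots,\widehat{k}-1$), which is exactly why the endpoint $j=\widehat{k}$ has to be dispatched directly via the terminal condition; and (ii) ensuring that self-adjointness of $P_\ell^{\widehat{k}}$ is already in hand from Remark \ref{yu-reamrk-4-7-1}, since non-negativity of the quadratic form $x_0\mapsto\langle P_\ell^{\widehat{k}}x_0,x_0\rangle_H$ together with self-adjointness is precisely what membership in $\mathcal{SL}_+(H)$ requires.
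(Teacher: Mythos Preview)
Your proposal is correct and is exactly the argument the paper has in mind: the corollary is stated in the paper without proof, simply as an immediate consequence of Remark~\ref{yu-reamrk-4-7-1} and Proposition~\ref{thm-filq}, and your write-up spells out precisely this combination, including the careful handling of the endpoint $j=\widehat{k}$ via the terminal condition $P_{\widehat{k}}^{\widehat{k}}=M$.
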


\subsection{Infinite horizon LQ problem}

    Arbitrarily fix $\Lambda_\hbar=\{t_j\}_{j\in \mathbb N}\in \mathcal J_\hbar$,  $(Q_j)_{j\in\mathbb N^+}\in \mathcal M_{\hbar,\gg}(H)$ and $(R_j)_{j\in\mathbb N^+}\in \mathcal M_{\hbar,\gg}(U)$.
    Let $\mathcal{U}_{ad}(\cdot)$ and $J(\cdot;\cdot)$ be defined by (\ref{1015-4}) and
    (\ref{1015-5-1}), respectively. Throughout this subsection, we always suppose that
\begin{equation}\label{yu-3-0-1}
    \mathcal{U}_{ad}(x_0)\neq \emptyset\;\;\mbox{for any}\;\;x_0\in H.
\end{equation}
\par
    Now, we consider the problem
{\textbf{(I-I-LQ)}}.
    From (\ref{yu-3-0-1}), the following two facts should be noted:
\begin{enumerate}
  \item [(b1)] By a standard argument, one can show that the problem {\textbf{(I-I-LQ)}} has a unique optimal control for any $x_0\in H$;
  \item [(b2)] For any $x_0\in H$,  $\inf_{u\in\mathcal U_{ad}(x_0)}J(u;x_0)=\inf_{u\in l^2(\mathbb{N}^+;U)}J(u;x_0)<+\infty$. Here and in what follows, we permit $J(u;x_0)=+\infty$ if $u\in l^2(\mathbb{N}^+;U)\setminus \mathcal{U}_{ad}(x_0)$.
\end{enumerate}
\par
    For any $x_0\in H$, we define the value function of the problem {(\textbf{I-I-LQ})} associated with $x_0\in H$ as follows:
\begin{equation*}\label{1015-5}
V(x_0):=\inf_{u\in\mathcal U_{ad}(x_0)} J(u;x_0).
\end{equation*}
   Since (b2) stands, it is clear that $V(x_0)<+\infty$ for any $x_0\in H$.
   For each $\ell\in\mathbb N$ and any $x_0\in H$, we define
\begin{equation*}\label{1015-6}
\mathcal U_{ad}(x_0;\ell):=\{u=(u_{j+\ell})_{j\in\mathbb{N}^+}\in l^2(\mathbb N^+;U): (x(t_{j+\ell};u,x_0,\ell))_{j\in\mathbb{N}^+}\in l^2(\mathbb N^+;H)\},
\end{equation*}
and
\begin{equation}\label{yu-b-3-4-1}
\begin{split}
J(u;x_0,\ell):=\sum_{j=\ell+1}^{+\infty}\left(\langle Q_jx(t_j;x_0,u,\ell),x(t_j;x_0,u,\ell)\rangle_H+\langle R_ju_j,u_j\rangle_U \right)\qquad\qquad\\
 \textrm{ for any }u=(u_{j+\ell})_{j\in\mathbb{N}^+}\in \mathcal U_{ad}(x_0;\ell),
\end{split}
\end{equation}
    where $x(\cdot;x_0,u,\ell)$ is the solution of the equation (\ref{1014-8}) with
    initial data $x_0$ and control $u=(u_{j+\ell})_{j\in \mathbb{N}^+}\in l^2(\mathbb{N}^+;U)$.
One can easily check that, for any $x_0\in H$,
\begin{equation}\label{1015-7}
\mathcal U_{ad}(x_0;0)=\mathcal U_{ad}(x_0),
\end{equation}
and
\begin{equation*}\label{1015-8}
J(u;x_0,0)=J(u;x_0)\;\;\mbox{for any}\;\;u\in \mathcal U_{ad}(x_0).
\end{equation*}
\par

\begin{lemma}\label{lem-3-1}
For each $\ell\in\mathbb N$ and $x_0\in H$, $\mathcal U_{ad}(x_0;\ell)\neq \emptyset$.
\end{lemma}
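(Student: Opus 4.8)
The plan is to reduce the assertion to the case $\ell=0$, which is exactly the standing hypothesis (\ref{yu-3-0-1}) in view of (\ref{1015-7}); the reduction rests on the $\hbar$-periodicity of the impulse data together with a trivial concatenation of controls.

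\emph{Step 1: the case $\ell=m\hbar$ with $m\in\mathbb N$.} By (\ref{Intro-2}) one has $t_{m\hbar+k}=t_{m\hbar}+t_{k}$ for all $k\in\mathbb N$, whence $t_{m\hbar+k}-t_{m\hbar+k-1}=t_{k}-t_{k-1}$ for every $k\ge 1$; moreover $\nu(\cdot+\hbar)=\nu(\cdot)$ gives $\nu(m\hbar+k)=\nu(k)$. Hence, for any $u=(u_{j+m\hbar})_{j\in\mathbb N^+}\in l^2(\mathbb N^+;U)$, writing $\tilde u_{k}:=u_{m\hbar+k}$ and comparing the jump-and-flow recursions in (\ref{1014-8}) with thresholds $m\hbar$ and $0$, an induction on $k$ yields $x(t_{m\hbar+k};x_0,u,m\hbar)=x(t_{k};x_0,\tilde u,0)$ for every $k\in\mathbb N^+$. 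Since $u\mapsto\tilde u$ is a bijection of $l^2(\mathbb N^+;U)$ onto itself, this shows $\mathcal U_{ad}(x_0;m\hbar)\neq\emptyset$ if and only if $\mathcal U_{ad}(x_0;0)\neq\emptyset$, and the latter holds by (\ref{yu-3-0-1}) and (\ref{1015-7}).

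\emph{Step 2: arbitrary $\ell$.} Fix $m\in\mathbb N$ with $m\hbar\ge\ell$ (for instance $m=\ell$). Applying the zero control on the steps $j\in\{\ell+1,\dots,m\hbar\}$ drives $x_0$ to $x_1:=\mathrm e^{A(t_{m\hbar}-t_\ell)}x_0$ at the instant $t_{m\hbar}$ (the step set is empty and $x_1=x_0$ when $m\hbar=\ell$). By Step 1 pick $\tilde u=(\tilde u_{j+m\hbar})_{j\in\mathbb N^+}\in\mathcal U_{ad}(x_1;m\hbar)$ and define $u=(u_{j+\ell})_{j\in\mathbb N^+}$ by $u_j:=0$ for $\ell<j\le m\hbar$ and $u_j:=\tilde u_j$ for $j>m\hbar$; then $u\in l^2(\mathbb N^+;U)$. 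By uniqueness of solutions of (\ref{1014-8}), $x(t_j;x_0,u,\ell)=x(t_j;x_1,\tilde u,m\hbar)$ for all $j>m\hbar$, so $(x(t_{j+\ell};x_0,u,\ell))_{j\in\mathbb N^+}$ agrees with the sequence $(x(t_{i+m\hbar};x_1,\tilde u,m\hbar))_{i\in\mathbb N^+}\in l^2(\mathbb N^+;H)$ apart from its first $m\hbar-\ell$ terms, and is therefore itself in $l^2(\mathbb N^+;H)$. Thus $u\in\mathcal U_{ad}(x_0;\ell)$, which proves the lemma.

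The only delicate point is the index bookkeeping behind the periodic shift identity in Step 1; once the $\hbar$-periodicity of $(t_j)_{j}$ and of $\nu(\cdot)$ is recorded, the induction there and the concatenation in Step 2 are entirely routine.
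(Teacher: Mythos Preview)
Your proof is correct and follows essentially the same strategy as the paper's own argument: first establish the claim at multiples of $\hbar$ via the periodic shift identity $x(t_{m\hbar+k};x_0,u,m\hbar)=x(t_k;x_0,\tilde u,0)$, then bridge an arbitrary $\ell$ to a nearby multiple of $\hbar$ by prepending zero control and invoking the previous case with initial state $\mathrm e^{A(t_{m\hbar}-t_\ell)}x_0$. The only cosmetic differences are that the paper fixes $m=[\ell/\hbar]+1$ rather than an arbitrary $m$ with $m\hbar\ge\ell$, and that you phrase Step~1 as a bijection (yielding an ``if and only if'') whereas the paper only writes out the direction actually needed.
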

\begin{proof}
Arbitrarily fix $\ell\in \mathbb N$ and $x_0\in H$.
First of all, by (\ref{yu-3-0-1}) and \eqref{1015-7}, we have
\begin{equation*}\label{1015-17}
\mathcal U_{ad}(x_0;0)\neq \emptyset,
\end{equation*}
 i.e., there exists a control $\widetilde u=(\widetilde u_j)_{j\in\mathbb{N}^+}\in l^2(\mathbb N^+;U)$
so that
\begin{equation}\label{1015-13}
(x(t_j;x_0, \widetilde u))_{j\in\mathbb{N}^+}\in l^2(\mathbb N^+;H).
\end{equation}
We now claim that
\begin{equation}\label{1015-10}
\mathcal U_{ad}(x_0;N\hbar)\neq \emptyset \textrm{ for any }N\in\mathbb N^+.
\end{equation}
Actually, let $\widetilde u^N=(\widetilde u^N_{j+N\hbar})_{j\in\mathbb{N}^+}$ be defined as
%\begin{equation}\label{1015-11}
% \widetilde u^N_j :=
% \begin{cases}
%  \widetilde u_{j-N\hbar}, & \forall\,j> N\hbar,\\
%  0, & \forall\,1\leq j\leq N\hbar.
%  \end{cases}
%\end{equation}
\begin{equation*}\label{1015-11}
 \widetilde u^N_j := \widetilde u_{j-N\hbar}\;\;\mbox{for each}\;\;j> N\hbar.
\end{equation*}
    It is clear that $(\tilde{u}_{j+N\hbar})_{j\in\mathbb{N}^+}\in l^2(\mathbb{N}^+;U)$.
    Moreover, we can directly check that
\begin{equation}\label{1015-12}
x(t_{j+N\hbar};x_0,\widetilde u^N,N\hbar)=x(t_j;x_0,\widetilde{u},0)\textrm{ for all }j\in\mathbb N^+.
\end{equation}
Thus,  by \eqref{1015-13} and \eqref{1015-12}, we have $\widetilde u^N\in \mathcal U_{ad}(x_0;N\hbar)$. It follows that (\ref{1015-10}) is true.
\par
    Let $N:=[\ell/\hbar]$. It is obvious that $N\hbar<\ell\leq (N+1)\hbar$. By the arbitrariness of $x_0$ and \eqref{1015-10}, there exists a control
\begin{equation}\label{1015-14}
\widehat v=(\widehat v_{j+(N+1)\hbar})_{j\in\mathbb{N}^+}\in\mathcal U_{ad}(\mathrm e^{A(t_{(N+1)\hbar}-t_\ell)}x_0;(N+1)\hbar),
\end{equation}
    i.e.,
\begin{equation}\label{yu-3-3-4}
    (x(t_{j+(N+1)\hbar};\mathrm e^{A(t_{(N+1)\hbar}-t_{\ell})}x_0,
    \widehat{v},(N+1)\hbar))_{j\in\mathbb{N}^+}\in l^2(\mathbb{N}^+;H).
\end{equation}
Define $\widehat u=(\widehat u_{j+\ell})_{j\in\mathbb N^+}$ in the manner:
\begin{equation*}\label{1015-15}
 \widehat u_j:=\begin{cases}
 0 & \textrm{ when }\ell< j\leq(N+1)\hbar,\\
 \widehat v_j &\textrm{ when }j>(N+1)\hbar.
 \end{cases}
\end{equation*}
    By (\ref{1015-14}), it is clear that $\widehat{u}\in l^2(\mathbb{N}^+;U)$. Moreover,
   one can easily check that
\begin{equation*}\label{1015-16}
x(t_j;\widehat u,x_0,\ell)=\begin{cases}
\mathrm e^{A(t_j-t_\ell)}x_0, & \textrm{ if }\ell\leq j\leq (N+1)\hbar,\\
x(t_j;\widehat v,\mathrm e^{A(t_{(N+1)\hbar}-t_\ell)}x_0,(N+1)\hbar), &\textrm{ if }j>(N+1)\hbar.
\end{cases}
\end{equation*}
This, along with \eqref{yu-3-3-4}, yields that $(x(t_{j+\ell};\widehat u,x_0,\ell))_{j\in\mathbb{N}^+}\in l^2(\mathbb N^+;H)$. Hence, $\widehat u\in \mathcal U_{ad}(x_0;\ell)$, i.e., $\mathcal U_{ad}(x_0;\ell)\neq \emptyset$.
This ends the proof.
\end{proof}
     Now, for each $\ell\in\mathbb N$, we study the following  LQ problem:
\vskip 5pt
{(\textbf{I-I-LQ})}$_{\ell}$:  Given $x_0\in H$. Find a control $u^\ell\in\mathcal U_{ad}(x_0;\ell)$ so that
\begin{equation}\label{1015-9}
V(x_0;\ell):=\inf_{u\in\mathcal U_{ad}(x_0;\ell)} J(u;x_0,\ell)=J(u^\ell;x_0,\ell).
\end{equation}
We call $V(\cdot;\ell)$ the value
function of {\textbf{(I-I-LQ)}$_\ell$}.
It is clear that ({\textbf{I-I-LQ}})$_{0}$ coincides with ({\textbf{I-I-LQ}}) and $V(\cdot)=V(\cdot;0)$.
\begin{lemma}\label{lem-3-2}
     The following two statements are true:
\begin{enumerate}
\item[(i)] For each $\ell\in\mathbb N$, there is a unique $P_\ell\in \mathcal{SL}_{+}(H)$, so that
\begin{equation}\label{1015-2}
V(x_0;\ell)=\langle P_\ell x_0,x_0\rangle_H \textrm{ for any }x_0\in H.
\end{equation}
\item[(ii)]  $P_{\ell+\hbar}=P_\ell$ for all $\ell \in \mathbb N$.
\end{enumerate}
\end{lemma}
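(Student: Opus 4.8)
The plan is to obtain $P_\ell$ as the limit, as $\widehat k\to\infty$, of the finite-horizon Riccati operators $P_\ell^{\widehat k}$ from Subsection \ref{yu-sec-2.1}, applied with terminal operator $M=0$ and with the same $(Q_j)$, $(R_j)$. First I would note that for a fixed $\ell$ and any $\widehat k>\ell$, Proposition \ref{thm-filq} gives $V(x_0;\ell,\widehat k)=\langle P_\ell^{\widehat k}x_0,x_0\rangle_H$ where here $V(\cdot;\ell,\widehat k)$ is the finite-horizon value with $M=0$; by restricting an infinite-horizon admissible control to its first $\widehat k-\ell$ coordinates (and using that the tail cost is nonnegative) one gets $V(x_0;\ell,\widehat k)\le V(x_0;\ell)<\infty$, where finiteness of $V(x_0;\ell)$ comes from Lemma \ref{lem-3-1} together with the standing assumption \eqref{yu-3-0-1} (coercivity of the cost in $u$, via $(R_j)\in\mathcal M_{\hbar,\gg}(U)$, makes the infimum attained and finite). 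On the other hand, extending a finite-horizon control by zero is admissible for the next-longer horizon and does not decrease the cost (the extra terms $\langle Q_j x(t_j),x(t_j)\rangle_H\ge0$), so $\langle P_\ell^{\widehat k}x_0,x_0\rangle_H$ is nondecreasing in $\widehat k$ and bounded above by $V(x_0;\ell)$. Hence $\langle P_\ell^{\widehat k}x_0,x_0\rangle_H$ converges for every $x_0$; polarization and uniform boundedness give a limit $P_\ell\in\mathcal{SL}_+(H)$ with $\langle P_\ell x_0,x_0\rangle_H=\lim_{\widehat k}\langle P_\ell^{\widehat k}x_0,x_0\rangle_H\le V(x_0;\ell)$.

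Next I would prove the reverse inequality $\langle P_\ell x_0,x_0\rangle_H\ge V(x_0;\ell)$, which together with the above yields \eqref{1015-2}. Fix $\varepsilon>0$ and, for each $\widehat k$, let $v(\ell,\widehat k)$ be the optimal control of \textbf{(F-I-LQ)}$_{\ell,\widehat k}$ (with $M=0$), so that $J(v(\ell,\widehat k);x_0,\ell,\widehat k)=\langle P_\ell^{\widehat k}x_0,x_0\rangle_H\le\langle P_\ell x_0,x_0\rangle_H$. The coercivity from $(R_j)\in\mathcal M_{\hbar,\gg}(U)$ bounds the $l^2$-norms of these optimal controls uniformly in $\widehat k$; passing to a weakly convergent subsequence $v(\ell,\widehat k)\rightharpoonup u^\star$ in $l^2(\mathbb N^+;U)$, continuity of the state map and weak lower semicontinuity of the (convex, nonnegative) cost give $J(u^\star;x_0,\ell)\le\liminf_{\widehat k}J(v(\ell,\widehat k);x_0,\ell,\widehat k)\le\langle P_\ell x_0,x_0\rangle_H<\infty$; in particular $u^\star\in\mathcal U_{ad}(x_0;\ell)$, so $V(x_0;\ell)\le J(u^\star;x_0,\ell)\le\langle P_\ell x_0,x_0\rangle_H$. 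Uniqueness of $P_\ell$ is immediate since a symmetric bounded operator is determined by its quadratic form. This proves (i).

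For (ii), the idea is that the problem \textbf{(I-I-LQ)}$_\ell$ is ``$\hbar$-periodic in $\ell$'': since $\Lambda_\hbar\in\mathcal J_\hbar$ gives $t_{j+\hbar}=t_j+t_\hbar$ for all $j$ (see \eqref{Intro-2}), and since $\nu(j+\hbar)=\nu(j)$, $B_{\nu(j+\hbar)}=B_{\nu(j)}$, $Q_{j+\hbar}=Q_j$, $R_{j+\hbar}=R_j$, the shift $j\mapsto j+\hbar$ sets up an explicit bijection between $\mathcal U_{ad}(x_0;\ell)$ and $\mathcal U_{ad}(x_0;\ell+\hbar)$: given $u=(u_{j+\ell})_{j\in\mathbb N^+}$ admissible for horizon $\ell$, the shifted control $\widetilde u$ with $\widetilde u_{j+\ell+\hbar}:=u_{j+\ell}$ satisfies $x(t_{j+\ell+\hbar};x_0,\widetilde u,\ell+\hbar)=x(t_{j+\ell};x_0,u,\ell)$ for all $j$ (because on each free interval the evolution is by $e^{A\cdot}$ with the same interval lengths, and the jumps use the same operators), hence $J(\widetilde u;x_0,\ell+\hbar)=J(u;x_0,\ell)$, and this correspondence is invertible. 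Taking infima over the two admissible sets gives $V(x_0;\ell+\hbar)=V(x_0;\ell)$ for every $x_0\in H$, and then $P_{\ell+\hbar}=P_\ell$ follows from (i) and the uniqueness already established.

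The main obstacle I anticipate is making the limiting argument in part (i) fully rigorous in the infinite-dimensional, infinite-horizon setting: one must be careful that the uniform bound on $\langle P_\ell^{\widehat k}x_0,x_0\rangle_H$ indeed yields uniform boundedness of $\{P_\ell^{\widehat k}\}$ in $\mathcal L(H)$ (uniform boundedness principle applied to the quadratic forms, or a direct estimate), that the weak limit $u^\star$ really lies in $\mathcal U_{ad}(x_0;\ell)$ rather than merely in $l^2$, and that the monotonicity ``extend-by-zero does not increase cost'' is used with the correct terminal operator $M=0$. Everything else — the bijection in (ii) and the uniqueness in (i) — is essentially bookkeeping, relying only on \eqref{Intro-2}, the periodicity built into $\mathcal M_{\hbar,\gg}$, and Proposition \ref{thm-filq}.
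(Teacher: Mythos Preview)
Your proposal is correct and follows the same overall architecture as the paper (approximate the infinite-horizon value by the finite-horizon ones with $M=0$, use Proposition~\ref{thm-filq}, and pass to the limit via weak compactness of the finite-horizon optimal controls), but the mechanism by which you produce the operator $P_\ell$ is genuinely different. You exploit the \emph{monotonicity} of $\widehat k\mapsto\langle P_\ell^{\widehat k}x_0,x_0\rangle_H$ (extend-by-zero) together with the upper bound $V(x_0;\ell)$ to conclude that the quadratic forms converge, and then invoke polarization and the uniform boundedness principle to obtain $P_\ell\in\mathcal{SL}_+(H)$ as the operator limit of the $P_\ell^{\widehat k}$. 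The paper, by contrast, never states the monotonicity; it first establishes $V(x_0;\ell)=\lim_{k\to\infty}V(x_0;\ell,k)$, then shows directly that $V(\cdot;\ell)$ satisfies the \emph{parallelogram law} (inherited from the finite-horizon quadratic forms) and is continuous at $0$, and finally appeals to Kurepa's theorem \cite{Kurepa} to represent $V(\cdot;\ell)$ as $\langle P_\ell\,\cdot,\cdot\rangle_H$. Your route is more elementary in that it avoids the external reference and uses only the standard fact that a nondecreasing, bounded-above net of positive operators converges; the paper's route is slightly more conceptual in that it characterizes $V(\cdot;\ell)$ intrinsically as a quadratic form. The weak-limit step you flag as the main obstacle (that $u^\star\in\mathcal U_{ad}(x_0;\ell)$ and lower semicontinuity of the cost) is exactly what the paper also has to verify, and is handled there by identifying the weak limit of the truncated state sequences with $(x(t_{j+\ell};x_0,u^\star,\ell))_{j\in\mathbb N^+}$. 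Your argument for (ii) via the shift bijection coincides with the paper's. (Minor note: the ``Fix $\varepsilon>0$'' in your second paragraph is never used and can be deleted.)
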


\begin{proof}
Let $\ell\in\mathbb N$ be arbitrarily fixed.
    We first present the proof of $(i)$.  The proof is divided into 3 steps.

\vskip 5pt

    \emph{Step 1. We prove that there exists a constant  $K>0$ so that
\begin{equation}\label{1016-1}
   V(x_0;\ell)\leq K\|x_0\|_H^2\;\;\mbox{for any}\;\;x_0\in H.
\end{equation}}

 Let $x_0\in H$ be arbitrarily fixed.  By  Lemma \ref{lem-3-1}, we know that $\mathcal U_{ad}(x_0;\ell)\neq \emptyset$ which implies that  $0\leq V(x_0;\ell)<+\infty$ obviously.
Thus the problem (\textbf{I-I-LQ})$_\ell$
has a unique optimal control $u^{\ell}=(u^\ell_{j+\ell})_{j\in\mathbb{N}^+}$ (see (b1) above for the case $\ell=0$).
\par
For each $k>\ell$, we consider the finite horizon LQ  problem (\textbf{F-I-LQ})$_{\ell,k}$
 with $M=0$
and the same $(Q_j)_{j\in\mathbb{N}^+}$, $(R_j)_{j\in\mathbb{N}^+}$ as in the problem (\textbf{I-I-LQ})$_{\ell}$ with respect to the initial data $x_0\in H$ (for the definition of (\textbf{F-I-LQ})$_{\ell,k}$, one can see Section \ref{yu-sec-2.1} with $\widehat{k}=k$).  Then, it is obvious that
\begin{equation}\label{1016-2}
V(x_0;\ell,k)\leq J(u^{\ell};x_0,\ell)\leq V(x_0;\ell)\;\;\textrm{for each}\;k>\ell,
\end{equation}
  where $V(\cdot;\ell,k)$ is defined by (\ref{1014-9}) with $\widehat{k}=k$. Let $x^{\ell,k,x_0}(\cdot)$ and $u^{\ell,k,x_0}=(u^{\ell,k,x_0}_{j+\ell})_{j\in\mathbb{N}^+}\in l^2(\mathbb{N}^+;U)$ be the optimal state and the optimal control of the problem
(\textbf{F-I-LQ})$_{\ell,k}$ with respect to the initial state $x_0$. Then, by (\ref{1016-2}),
\begin{equation}\label{1016-3}
\sum_{j=\ell+1}^k(\langle Q_j x^{\ell,k,x_0}(t_j), x^{\ell,k,x_0}(t_j)\rangle_H
+\langle R_j u^{\ell,k,x_0}_j, u^{\ell,k,x_0}_j\rangle_U)\leq V(x_0;\ell) \;\;\mbox{for each}\;\;k\in\mathbb N^+.
\end{equation}
    Since the effective interval of (\textbf{F-I-LQ})$_{\ell,k}$ is $[t_{\ell},t_k]$, we permit that
\begin{equation}\label{yu-3-4-1}
    u^{\ell,k,x_0}_{j}=0
    \;\;\mbox{for each}\;\;j>k.
\end{equation}
    Define
\begin{equation}\label{yu-3-0-2}
    \widehat{x}^{\ell,k,x_0}_j
    =
\begin{cases}
     x^{\ell,k,x_0}(t_j),&\ell \leq j\leq k,\\
     0,&j>k,
\end{cases}
\end{equation}
    and $\widehat{x}^{\ell,k,x_0}:=(\widehat{x}^{\ell,k,x_0}_{j+\ell})_{j\in\mathbb{N}^+}$.
   Because  $(Q_j)_{j\in\mathbb{N}^+}\in \mathcal{M}_{\hbar,\gg}(H)$ and $(R_j)_{j\in\mathbb{N}^+}\in\mathcal{M}_{\hbar,\gg}(U)$, there exists a  constant $\delta>0$ so that
$Q_j-\delta I\in \mathcal {SL}_+(H)$ and $R_j-\delta I\in \mathcal {SL}_+(U)$ for each $j\in\mathbb{N}^+$.
By (\ref{yu-3-4-1}), (\ref{yu-3-0-2}) and \eqref{1016-3}, we have that
\begin{eqnarray}\label{1016-4}
V(x_0;\ell)&\geq&\sum_{j=\ell+1}^{+\infty}(\|Q_j^{\frac{1}{2}}\widehat{x}_j^{\ell,k,x_0}\|_H^2
+\|R_j^{\frac{1}{2}}u_j^{\ell,k,x_0}\|_U^2)\nonumber\\
&\geq&\delta\sum_{j=\ell+1}^{+\infty}\left(\|\widehat{x}^{\ell,k,x_0}_j\|_H^2+\|u_j^{\ell,k,x_0}\|_U^2
\right)\;\;\mbox{for each}\;\;k>\ell.
\end{eqnarray}
Thus,   there exist a subsequence of $(\widehat{x}^{\ell,k,x_0}, u^{\ell,k,x_0})_{k> \ell}$, still denoted by the same way, and  $(x^{\ell,x_0}:=(x_{j+\ell}^{\ell,x_0})_{j\in\mathbb{N}^+},
u^{\ell,x_0}:=(u_{j+\ell}^{\ell,x_0})_{j\in\mathbb{N}^+}) \in l^2(\mathbb{N}^+;H)
\times l^2(\mathbb{N}^+;U)$ so that
\begin{equation}\label{1016-5}
\begin{cases}
\widehat{x}^{\ell,k,x_0}\to x^{\ell,x_0} \textrm{ weakly in }  l^2(\mathbb{N}^+;H)\\
u^{\ell,k,x_0}\to u^{\ell,x_0} \textrm{ weakly in }  l^2(\mathbb{N}^+;U)
\end{cases}
    \textrm{ as }k\to\infty.
\end{equation}
    By (\ref{1016-5}) and (\ref{yu-3-0-2}), we have that, for each $j>\ell$,
\begin{eqnarray*}\label{yu-3-4-2}
    \langle x^{\ell,x_0}_j,\varphi\rangle_H&=&\lim_{k\to+\infty}\langle \widehat{x}^{\ell,k,x_0}_j,\varphi\rangle_H=\lim_{k\to+\infty}\langle x^{\ell,k,x_0}(t_j),\varphi\rangle_H\nonumber\\
    &=&\lim_{k\to+\infty}\left\langle
    \mathrm e^{A(t_j-t_\ell)}x_0+\sum_{i=\ell+1}^{j-1}\mathrm e^{A(t_j-t_i)}B_i u_{i}^{\ell,k,x_0}, \varphi\right\rangle_H\nonumber\\
    &=& \langle x(t_j;x_0,u^{\ell,x_0},\ell), \varphi\rangle_H\;\;\mbox{for any}\;\;\varphi\in H.
\end{eqnarray*}
    This implies that
\begin{equation}\label{yu-3-4-3}
    x^{\ell,x_0}=(x(t_{j+\ell};x_0,u^{\ell,x_0},\ell))_{j\in\mathbb{N}^+}.
\end{equation}
    It follows that
\begin{equation*}\label{yu-3-4-4}
    (x(t_{j+\ell};x_0,u^{\ell,x_0},\ell))_{j\in\mathbb{N}^+}\in l^2(\mathbb{N}^+;H)
\end{equation*}
    and
\begin{equation*}\label{yu-3-4-5}
    u^{\ell,x_0}\in\mathcal{U}_{ad}(x_0;\ell).
\end{equation*}
    Moreover, since $\mathcal{Q}\in \mathcal{M}_{\hbar,\gg}(H)$ and $\mathcal{R}\in\mathcal{M}_{\hbar,\gg}(U)$, by (\ref{1016-5}) and (\ref{yu-3-4-3}), we have
\begin{equation*}\label{1016-6}
\begin{cases}
    \|(Q_{j+\ell}^{\frac{1}{2}}x(t_{j+\ell};x_0,
    u^{\ell,x_0},\ell))_{j\in\mathbb{N}^+}\|_{l^2(\mathbb{N}^+;H)}\leq \liminf_{k\to\infty}\|(Q_{j+\ell}^{\frac{1}{2}}
    \widehat{x}^{\ell,k,x_0}_{j+\ell})_{j\in\mathbb{N}^+}
    \|_{l^2(\mathbb{N}^+;H)},\\
\|(R_{j+\ell}^{\frac{1}{2}}u_{j+\ell}^{\ell,x_0})_{j\in\mathbb{N}^+}\|_{l^2(\mathbb{N}^+;U)}\leq \liminf_{k\to\infty}\|(R_{j+\ell}^{\frac{1}{2}}u_{j+\ell}^{\ell,k,x_0})_{j\in\mathbb{N}^+}
\|_{l^2(\mathbb{N}^+;U)}.
\end{cases}
\end{equation*}
    This, together with the first inequality in (\ref{1016-4}), gives that
\begin{equation}\label{yu-3-4-6}
    J(u^{\ell,x_0};x_0,\ell)\leq \liminf_{k\to+\infty}V(x_0;\ell,k).
\end{equation}
    This, along with (\ref{1016-2}), yields that
\begin{equation*}\label{yu-3-4-7}
    J(u^{\ell,x_0};x_0,\ell)\leq V(x_0;\ell).
\end{equation*}
    By this, the arbitrariness of $x_0$, and the optimality of $V(x_0;\ell)$, we can conclude that $u^{\ell,x_0}$ is the optimal control to the problem (\textbf{I-I-LQ})$_{\ell}$ (i.e., $u^{\ell,x_0}=u^{\ell}$) and
\begin{equation*}\label{yu-3-4-8}
    J(u^{\ell,x_0};x_0,\ell)= V(x_0;\ell)\;\;\mbox{for any}\;\;x_0\in H.
\end{equation*}
    This, along with (\ref{yu-3-4-6}) and (\ref{1016-2}), gives that
\begin{equation}\label{yu-3-4-9}
    V(x_0;\ell)=\lim_{k\to+\infty} V(x_0;\ell,k).
\end{equation}
    For each $k>\ell$, let $\{P_j^k\}_{j=1}^k$ be the solution of the equation (\ref{1014-10}) with $\widehat k=k$. Thus, by  Proposition \ref{thm-filq} and Corollary \ref{yu-corollary-4-7-2}, we obtain
\begin{equation}\label{yu-3-4-10}
     V(x_0;\ell,k)=\langle P_\ell^kx_0,x_0\rangle_H=\|(P_{\ell}^k)^{\frac{1}{2}}x_0\|^2_H\;\;\mbox{for each}\;\;x_0\in H.
\end{equation}
    This, along with (\ref{1016-2}), implies that
\begin{equation*}\label{yu-3-4-11}
    \sup_{k>\ell} \|(P_{\ell}^k)^{\frac{1}{2}}x_0\|_H
    \leq \sqrt{V(x_0;\ell)}<+\infty\;\;\mbox{for each}\;\;x_0\in H.
\end{equation*}
    Thus, by  the uniform boundedness theorem, there is a constant $K>0$, so that
\begin{equation*}\label{yu-3-4-12}
    \|(P_\ell^k)^{\frac{1}{2}}\|_{\mathcal{L}(H)}\leq \sqrt{K}\;\;\mbox{for each}\;\;k>\ell.
\end{equation*}
    This, together with (\ref{yu-3-4-10}) again, implies that
\begin{equation}\label{yu-3-4-13-b}
    V(x_0;\ell,k)\leq K\|x_0\|^2_H\;\;\mbox{for each}\;\;x_0\in H.
\end{equation}
    Thus, by (\ref{yu-3-4-9}) and (\ref{yu-3-4-13-b}), we can conclude that (\ref{1016-1}) holds.

\vskip 5pt
\emph{Step 2.  We prove that $V(\cdot;\ell)$ satisfies the parallelogram law, i.e.,
\begin{equation}\label{1023-1}
V(x_0+y_0;\ell)+V(x_0-y_0;\ell)=2\left(V(x_0;\ell)+V(y_0;\ell)\right),\,\forall\,
x_0,y_0\in H.
\end{equation}}
\par
In fact, by Proposition \ref{thm-filq}, for each $k>\ell$, it is obvious that
\begin{equation*}\label{yu-3-4-13}
    V(x_0+y_0;k,\ell)+V(x_0-y_0;k,\ell)=2(V(x_0;k,\ell)+V(y_0;k,\ell))\;\;\textrm{ for any }x_0, y_0\in H.
\end{equation*}
   This, together with (\ref{yu-3-4-9}), yields that (\ref{1023-1}) holds.
\vskip 5pt
    \emph{Step 3. We complete the proof of $(i)$.}
\par
     Indeed, by (\ref{1016-1}), we can conclude that $V(\cdot;\ell)$ is continuous in $0$. Thus, by (\ref{1023-1}) and \cite[Theorem 2]{Kurepa}, $V(\cdot;\ell)$ is continuous at any point. Thus, by (\ref{1023-1}) again and  \cite[Theorem 3]{Kurepa}, there is a unique symmetric
    and bounded linear operator $P_\ell$ so that
\begin{equation*}\label{yu-3-4-14}
    V(x_0;\ell)=\langle P_\ell x_0,x_0\rangle_H\;\;\mbox{for each}\;\;x_0\in H.
\end{equation*}
    By the fact that $V(x_0;\ell)\geq 0$ for each $x_0\in H$, it is clear that $P_\ell\in\mathcal{SL}_{+}(H)$.
Hence,  we complete the proof of $(i)$.
\par
   Next, we give the proof of $(ii)$.
    To this end, by (\ref{1015-2}), it suffices to show that
\begin{equation}\label{1024-1}
V(x_0;\ell)=V(x_0;\ell+\hbar)\,\textrm{ for each } x_0\in H.
\end{equation}
In fact, let $x_0\in H$ be arbitrarily  fixed. Let  $u^\ell=(u^\ell_{j+\ell})_{j\in\mathbb{N}^+}\in l^2(\mathbb{N}^+;U)$ be the optimal control of (\textbf{I-I-LQ})$_\ell$ with respect to
the initial state $x_0$, i.e.,
\begin{equation}\label{1024-2}
V(x_0;\ell)=J(u^\ell;x_0,\ell).
\end{equation}
Let $u^{\ell+\hbar}=(u^{\ell+\hbar}_{j+\ell+\hbar})_{j\in\mathbb{N}^+}\in l^2(\mathbb{N}^+;U)$ be defined as
\begin{equation}\label{1024-3}
u^{\ell+\hbar}_j:=u^\ell_{j-\hbar}, \,\forall\,j>\ell+\hbar.
\end{equation}
   By this and (\ref{Intro-2}), it is easy to check that
\begin{equation}\label{1024-4}
x(t_j;x_0,u^{\ell+\hbar},\ell+\hbar)=x(t_{j-\hbar};x_0,u^\ell,\ell)\;\; \mbox{for each}\;\;j\geq \ell+ \hbar.
\end{equation}
Thus, by (\ref{yu-b-3-4-1}), (\ref{1024-3}) and (\ref{1024-4}), we have
\begin{equation*}\label{1024-5}
V(x_0;\ell+\hbar)\leq J(u^{\ell+\hbar};x_0,\ell+\hbar)=J(u^\ell;x_0,\ell).
\end{equation*}
    This, along with (\ref{1024-2}), implies that
    $V(x_0;\ell+\hbar)\leq V(x_0;\ell)$. Similarly, it holds that $V(x_0;\ell)\leq V(x_0;\ell+\hbar)$. Therefore,  \eqref{1024-1} holds. Thus, by the arbitrariness of $x_0$ and \eqref{1024-1}, the claim $(ii)$ is true.
\par
This completes the proof.
\end{proof}
The following result is the dynamic programming  principle to  the problem \textbf{(I-I-LQ)}$_{\ell}$, which has been proved in \cite{Q-W-Y} for finite dimensional case. One can easily check that, by the same arguments used in \cite{Q-W-Y}, the dynamic programming  principle also holds for infinite dimensional setting. Thus, we omit its proof.

\begin{lemma}\label{lem-3-3}
For each $k>\ell$ ($\ell, k\in\mathbb N$), it holds that
%\begin{equation}\label{1025-1}
%\begin{split}
% V(x_0;\ell)=\inf_{\substack{u=(u_j)_{j>\ell}\\u\in l^2(\mathbb N^+;(L^2(\Omega))^m) }}
%\Big\{&\sum_{j=\ell+1}^k \left(\langle Q_j x(t_j;x_0,u,\ell), x(t_j;x_0,u,\ell)  \rangle  +\langle R_j u_j,u_j\rangle\right)\\
%& +V(x(t_k^+;x_0,u,\ell);k)\Big\},\,\forall\,x_0\in(L^2(\Omega))^n.
%\end{split}
%\end{equation}
\begin{eqnarray*}\label{1025-1}
 V(x_0;\ell)=\inf_{u=(u_{j+
 \ell})_{j\in\mathbb{N}^+}\in l^2(\mathbb{N}^+;U) }
\Big\{\sum_{j=\ell+1}^k \left(\langle Q_j x(t_j;x_0,u,\ell), x(t_j;x_0,u,\ell)  \rangle_H  +\langle R_j u_j,u_j\rangle_U\right)\nonumber\\
  +V(x(t_k^+;x_0,u,\ell);k)\Big\}\;\;\mbox{for any}\;\;x_0\in H.
\end{eqnarray*}

\end{lemma}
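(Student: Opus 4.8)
The plan is to prove the claimed identity by the two standard inequalities of the dynamic programming method. Denote the right-hand side of the asserted identity by $W(x_0)$, and for $u=(u_{j+\ell})_{j\in\mathbb{N}^+}\in l^2(\mathbb{N}^+;U)$ write $S_u:=\sum_{j=\ell+1}^{k}\big(\langle Q_jx(t_j;x_0,u,\ell),x(t_j;x_0,u,\ell)\rangle_H+\langle R_ju_j,u_j\rangle_U\big)$ for the running cost of $u$ over the steps $\ell+1,\dots,k$, so that $W(x_0)=\inf_u\{S_u+V(x(t_k^+;x_0,u,\ell);k)\}$. Two facts established earlier are used throughout. First, $V(\,\cdot\,;k)$ is finite on all of $H$: by Lemma~\ref{lem-3-1} one has $\mathcal U_{ad}(z;k)\neq\emptyset$ for every $z\in H$, and by \emph{Step~1} of the proof of Lemma~\ref{lem-3-2} (applied with $\ell$ replaced by $k$) one has $V(z;k)\le K\|z\|_H^2<+\infty$; hence the bracket defining $W(x_0)$ is always a finite number. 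Second, the impulsive system \eqref{1014-8} is a deterministic flow, so for any control $u$ and any $k>\ell$ the trajectory satisfies $x(t_j;x_0,u,\ell)=x\big(t_j;x(t_k^+;x_0,u,\ell),u',k\big)$ for all $j>k$, where $u'$ denotes the part of $u$ carried by the indices $>k$, regarded as a control based at $k$; correspondingly the cost functional \eqref{yu-b-3-4-1} splits additively at the instant $t_k$.

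For the inequality $V(x_0;\ell)\le W(x_0)$, I would fix an arbitrary $u=(u_{j+\ell})_{j\in\mathbb{N}^+}\in l^2(\mathbb{N}^+;U)$, put $z_0:=x(t_k^+;x_0,u,\ell)$, and for $\varepsilon>0$ choose $\widetilde u\in\mathcal U_{ad}(z_0;k)$ with $J(\widetilde u;z_0,k)\le V(z_0;k)+\varepsilon$. Gluing the piece of $u$ on the steps $\ell+1,\dots,k$ to $\widetilde u$ on the steps $>k$ produces a control $\widehat u$ whose state coincides with that of $u$ up to $t_k$ and with that of $\widetilde u$ afterwards (by the flow property); hence $(x(t_{j+\ell};x_0,\widehat u,\ell))_{j\in\mathbb{N}^+}$ is a finite vector followed by an $l^2$ tail, so $\widehat u\in\mathcal U_{ad}(x_0;\ell)$, and the additive splitting gives $J(\widehat u;x_0,\ell)=S_u+J(\widetilde u;z_0,k)$. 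Therefore $V(x_0;\ell)\le J(\widehat u;x_0,\ell)\le S_u+V(z_0;k)+\varepsilon$; letting $\varepsilon\downarrow0$ and then taking the infimum over $u$ yields $V(x_0;\ell)\le W(x_0)$.

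For the reverse inequality $V(x_0;\ell)\ge W(x_0)$, I would use the optimal control $u^\ell$ of \textbf{(I-I-LQ)}$_\ell$ with datum $x_0$ (its existence and uniqueness are obtained in the course of the proof of Lemma~\ref{lem-3-2}), so that $V(x_0;\ell)=J(u^\ell;x_0,\ell)$. Set $z_0:=x(t_k^+;x_0,u^\ell,\ell)$ and let $(u^\ell)'$ be the tail of $u^\ell$ beyond $k$, regarded as a control based at $k$. Since $(x(t_{j+\ell};x_0,u^\ell,\ell))_{j\in\mathbb{N}^+}\in l^2(\mathbb{N}^+;H)$, its tail lies in $l^2$, and by the flow property this tail equals $(x(t_{j+k};z_0,(u^\ell)',k))_{j\in\mathbb{N}^+}$; hence $(u^\ell)'\in\mathcal U_{ad}(z_0;k)$ and the additive splitting gives $V(x_0;\ell)=S_{u^\ell}+J((u^\ell)';z_0,k)\ge S_{u^\ell}+V(z_0;k)\ge W(x_0)$, the last step because $u^\ell$ is one admissible competitor in the infimum defining $W(x_0)$. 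Combining the two inequalities proves the lemma.

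I do not expect a genuine obstacle: this is the classical dynamic-programming splitting. The only points requiring care — and the only places where the infinite-dimensional, impulsive nature of the problem enters — are checking that the concatenated control stays in the admissible class $\mathcal U_{ad}$, which reduces to the $l^2$-summability of the glued state sequence and hence to the flow identity for \eqref{1014-8} on $[t_k,+\infty)$, and the a priori finiteness of $V(\,\cdot\,;k)$ on all of $H$, which is exactly \emph{Step~1} of Lemma~\ref{lem-3-2}. Since both are already available, the proof reduces almost verbatim to the finite-dimensional argument of \cite{Q-W-Y}, which is why it can be omitted.
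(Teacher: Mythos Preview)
Your proposal is correct and is precisely the standard two-inequality dynamic programming argument that the paper has in mind: the paper itself omits the proof, stating only that it follows by the same arguments as in \cite{Q-W-Y} for the finite-dimensional case. You have simply written out those arguments, taking care with the admissibility of the glued control and the finiteness of $V(\cdot;k)$, so there is nothing to compare.
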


%\begin{proof}
%It is easy.
%\end{proof}

The results in  Proposition
 \ref{thm-filq}, Lemma \ref{lem-3-2} and Lemma \ref{lem-3-3} lead to the following proposition.

\begin{proposition}\label{thm-3}
  Let the operators $ P_\ell\in \mathcal{SL}_{+}(H)$ be so that (\ref{1015-2}) in Lemma \ref{lem-3-2} holds for each $\ell=0,1,\ldots,\hbar$. Then $\{P_\ell\}_{\ell=0}^\hbar$ satisfy the Riccati-type equation \eqref{1014-3}.
\end{proposition}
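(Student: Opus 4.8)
The plan is to identify $\{P_\ell\}_{\ell=0}^{\hbar}$ defined through \eqref{1015-2} as the limit of the finite-horizon Riccati solutions $\{P_\ell^k\}$ and then pass the recursion \eqref{1014-10} to the limit. First I would recall from the proof of Lemma \ref{lem-3-2} (see \eqref{yu-3-4-9} and \eqref{yu-3-4-10}) that, for each fixed $\ell$, one has $\langle P_\ell x_0,x_0\rangle_H = V(x_0;\ell) = \lim_{k\to\infty}V(x_0;\ell,k) = \lim_{k\to\infty}\langle P_\ell^k x_0,x_0\rangle_H$ for every $x_0\in H$. Since all these operators are symmetric, polarization upgrades this to $\langle P_\ell^k x_0,y_0\rangle_H \to \langle P_\ell x_0,y_0\rangle_H$ for all $x_0,y_0\in H$; that is, $P_\ell^k \to P_\ell$ in the weak operator topology as $k\to\infty$, and moreover $\sup_k\|P_\ell^k\|_{\mathcal L(H)}<\infty$ by Step 1 of Lemma \ref{lem-3-2}.

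Next I would exploit the two structural facts that make the passage to the limit clean. On one hand, the finite-horizon Riccati recursion \eqref{1014-10} relates $P_{j-1}^k$ to $P_j^k$ through a fixed (in $k$) algebraic expression; on the other hand, the infinite-horizon value functions are $\hbar$-periodic, $P_{\ell+\hbar}=P_\ell$ by Lemma \ref{lem-3-2}(ii), and — crucially — for the finite-horizon problem the index shift $k\mapsto k+\hbar$ together with the $\hbar$-periodicity of $(Q_j)$, $(R_j)$ and $(B_{\nu(j)})$ gives $P_j^{k+\hbar}=P_{j-\hbar}^{k}$ on the overlap, so that $P_\ell^{k}\to P_\ell$ is consistent with $P_0=P_\hbar$. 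Concretely: fix $j\in\{1,\dots,\hbar\}$ and write \eqref{1014-10} with $\widehat k = k$ for $k$ large,
$$
P^{k}_{j-1}-\mathrm e^{A^*(t_{j}-t_{j-1})}P^{k}_{j}\mathrm e^{A(t_{j}-t_{j-1})}
=\mathrm e^{A^*(t_{j}-t_{j-1})}Q_{j}\mathrm e^{A(t_{j}-t_{j-1})}
-\mathrm e^{A^*(t_{j}-t_{j-1})}P^{k}_{j}B_{j}(R_{j}+B_{j}^* P^{k}_{j}B_{j})^{-1} B^*_{j}P^{k}_{j}\mathrm e^{A(t_{j}-t_{j-1})}.
$$
Every term on the left and the first term on the right converge in the weak operator topology once $P_j^k\to P_j$ weakly and $P_{j-1}^k\to P_{j-1}$ weakly (using boundedness of the fixed multipliers $\mathrm e^{A(t_j-t_{j-1})}$ and $\mathrm e^{A^*(t_j-t_{j-1})}$).

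The main obstacle is the nonlinear term $P_j^k B_j (R_j+B_j^*P_j^kB_j)^{-1}B_j^*P_j^k$: weak operator convergence of $P_j^k$ does not by itself pass through this product. To handle it I would use the LQ interpretation rather than raw operator algebra. By Proposition \ref{thm-filq} the quadratic form $\langle P_j^k x_0,x_0\rangle_H$ is a minimum of a cost functional over controls; by Lemma \ref{lem-3-2} (Step 1, equations \eqref{1016-5}–\eqref{yu-3-4-6}) the optimal controls and states of the finite-horizon problems converge weakly in $\ell^2$ to those of the infinite-horizon problem, and $V(x_0;\ell,k)\to V(x_0;\ell)$. Feeding this into the dynamic-programming identity of Lemma \ref{lem-3-3} with $k$ replaced by a single step $\ell\mapsto \ell+1$ gives, for each $x_0\in H$,
$$
\langle P_\ell x_0,x_0\rangle_H=\inf_{v\in U}\Big\{\langle Q_{\ell+1}\,\mathrm e^{A(t_{\ell+1}-t_\ell)}x_0,\mathrm e^{A(t_{\ell+1}-t_\ell)}x_0\rangle_H+\langle R_{\ell+1}v,v\rangle_U+\langle P_{\ell+1}(\mathrm e^{A(t_{\ell+1}-t_\ell)}x_0+B_{\nu(\ell+1)}v),\ \mathrm e^{A(t_{\ell+1}-t_\ell)}x_0+B_{\nu(\ell+1)}v\rangle_H\Big\},
$$
and the one-step quadratic minimization over $v$ — exactly the computation already carried out in \emph{Step 1} of the proof of Proposition \ref{thm-filq}, with $M$ there replaced by $P_{\ell+1}$ — yields precisely the $\ell$-th line of \eqref{1014-3} (restricted to $\ell\in\{0,\dots,\hbar-1\}$, hence $\nu(\ell+1)=\ell+1$). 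Finally, the periodicity $P_0=P_\hbar$ is immediate from Lemma \ref{lem-3-2}(ii), which closes the system \eqref{1014-3}. This route sidesteps the delicate weak-limit-through-a-nonlinearity issue by working at the level of the value functions, where the quadratic minimization is explicit and continuity is automatic.
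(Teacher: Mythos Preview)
Your final argument is correct and is essentially the paper's own proof: invoke the dynamic programming principle (Lemma~\ref{lem-3-3}) together with $V(\cdot;\ell)=\langle P_\ell\cdot,\cdot\rangle_H$ from Lemma~\ref{lem-3-2}(i) to express $\langle P_\ell x_0,x_0\rangle_H$ as a finite-horizon LQ value, then read off the Riccati recursion via the computation of Proposition~\ref{thm-filq}, and close with $P_0=P_\hbar$ from Lemma~\ref{lem-3-2}(ii). The only difference is cosmetic: the paper applies Lemma~\ref{lem-3-3} from $\ell$ all the way to $\hbar$ and identifies the result as the problem $(\textbf{F-I-LQ})_{\ell,\hbar}$ with terminal weight $M=P_\hbar$, so that Proposition~\ref{thm-filq} can be quoted wholesale; you instead take a single step $\ell\to\ell+1$ and redo the one-variable quadratic minimization (Step~1 of that proposition with $M$ replaced by $P_{\ell+1}$) by hand. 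Both routes are the same idea.

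The first half of your write-up --- establishing weak operator convergence $P_\ell^k\to P_\ell$, noting the index-shift relation $P_j^{k+\hbar}=P_{j-\hbar}^{k}$, and then confronting the nonlinear term $P_j^k B_j (R_j+B_j^*P_j^kB_j)^{-1}B_j^*P_j^k$ --- is an exploratory detour that you yourself abandon, and it plays no role in the argument that follows. It can be deleted without loss; the proof begins at the dynamic-programming identity.
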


\begin{proof}
For each $0\leq \ell<\hbar$, by Lemma \ref{lem-3-3} and the claim $(i)$ in Lemma \ref{lem-3-2}, we have that
\begin{eqnarray}\label{0204-4}
 \langle P_\ell x_0, x_0 \rangle_H=\inf_{u=(u_{j+\ell})_{j\in\mathbb{N}^+}\in l^2(\mathbb{N}^+;U)}
\Big\{\sum_{j=\ell+1}^\hbar \left(\langle Q_j x(t_j;x_0,u,\ell), x(t_j;x_0,u,\ell)  \rangle_H  +\langle R_j u_j,u_j\rangle_U\right)\nonumber\\
 +\langle P_\hbar x_0, x_0 \rangle_H\Big\}\;\;\mbox{for any}\;\;x_0\in H.
\end{eqnarray}
 It is obvious that  the right side of (\ref{0204-4}) is a finite horizon LQ problem {\textbf{(F-I-LQ)}}$_{\ell,\hbar}$ with $M=P_\hbar$ (for the definition of (\textbf{F-I-LQ})$_{\ell,\hbar}$, one can see Section \ref{yu-sec-2.1} with $\widehat{k}=\hbar$). Then, from Proposition
 \ref{thm-filq}, we can claim that
   $\{P_\ell\}_{\ell=1}^\hbar$ verifies the following Riccati-type equation:
\begin{eqnarray*}\label{yu-3-11-1}
&\;&P_{\ell-1}-\mathrm e^{A^*(t_{\ell}-t_{\ell-1})}P_{\ell}\mathrm e^{A(t_{\ell}-t_{\ell-1})}\nonumber\\
&=&\mathrm e^{A^*(t_{\ell}-t_{\ell-1})}Q_{\ell}\mathrm e^{A(t_{\ell}-t_{\ell-1})}
-\mathrm e^{A^*(t_{\ell}-t_{\ell-1})}P_{\ell}B_{\ell}(R_{\ell}+B_{\ell}^* P_{\ell}B_{\ell})^{-1} B_{\ell}^*P_{\ell}\mathrm e^{A(t_{\ell}-t_{\ell-1})}
\end{eqnarray*}
   for each $\ell\in\{1,2,\ldots,\hbar\}$. This, together with the claim $(ii)$ in Lemma \ref{lem-3-2}, yields that $\{P_\ell\}_{\ell=0}^k$ satisfies \eqref{1014-3}.
\end{proof}
\section{Proof of the main theorems}
    In this section, we shall prove Theorem \ref{yu-main-theorem} and Theorem
    \ref{thm-1}. As stated in Section \ref{intro}, the proof of Theorem \ref{yu-main-theorem} is based on Theorem \ref{thm-1}. Thus, we first prove Theorem \ref{thm-1} and then continue to prove Theorem \ref{yu-main-theorem}.
\subsection{The proof of Theorem \ref{thm-1}}

    We divide our proof into 4 steps.
\vskip 5pt
\emph{Step 1. The proof of $(i)\Rightarrow (ii)$.}
\par
Suppose the system $[A, \{B_k\}_{k=1}^\hbar,\Lambda_\hbar]$ is exponentially $\hbar$-stabilizable, i.e.,  there
exists a feedback law
 $$\mathcal F=\{F_k\}_{k=1}^\hbar\subset \mathcal L(H;U),$$
 so that  the solution $x_\mathcal F (t)$ of the closed-loop system (\ref{Intro-4(2)})
satisfies that
\begin{equation}\label{1103-2}
\|x_\mathcal F(t)\|_H \leq C \mathrm e^{-\mu t}\|x_\mathcal F(0)\|_H, \;\;\forall\,t>0
\end{equation}
for some $C>0$ and $\mu>0$. We denote the solution of (\ref{Intro-4(2)}) with initial data $x_0$ by $x_{\mathcal{F}}(\cdot;x_0)$.  For any $x_0\in H$, we take $u=(u_j)_{j\in\mathbb N^+}$ in (\ref{Intro-3}) (with $x(0)=x_0$) with
\begin{equation}\label{yu-3-5-3}
 u_j:=F_{\nu(j)} x(t_j;x_0,u),\;\;\forall\,j\in\mathbb{N}^+.
\end{equation}
    One can easily check that  $x_{\mathcal{F}}(\cdot;x_0)=x(\cdot;x_0,u)$.
Then, by (\ref{1103-2}), we have
\begin{eqnarray}\label{1103-3}
\sum_{j=1}^{+\infty} \|x(t_j;x_0,u)\|^2_H &\leq& C^2 \|x_0\|^2_H \left(\sum_{k=0}^{+\infty} \mathrm e^{-2k\mu t_\hbar}\right)\left(\sum_{j=1}^\hbar \mathrm e^{-2\mu t_j}\right)\nonumber\\
&\leq& C^2 \|x_0\|^2_H\left(\sum_{j=1}^\hbar \mathrm e^{-2\mu t_j}\right)\frac{1}{1- \mathrm e^{-2\mu t_\hbar}}<+\infty.
\end{eqnarray}
    This, together with (\ref{yu-3-5-3}), gives that
\begin{equation}\label{1103-4}
\sum_{j=1}^{+\infty }\|u_j\|^2_U\leq \max_{1\leq j\leq \hbar} \|F_j\|^2_{\mathcal L(U;H)}
\sum_{j=1}^{+\infty}\|x(t_j;x_0,u)\|^2_H<+\infty.
\end{equation}
  Thus, by (\ref{1103-3}) and (\ref{1103-4}), we have $u\in \mathcal U_{ad}(x_0)$, which indicates that $\mathcal U_{ad}(x_0)\neq \emptyset$. Hence, the claim $(ii)$ is true.

\vskip 5pt
    \emph{Step 2. The proof of $(ii)\Rightarrow (iii)$.}
\par
   Suppose the claim $(ii)$ is true. Let $(Q_j)_{j\in\mathbb N^+}\in\mathcal M_{\hbar,\gg}(H)$ and $(R_j)_{j\in\mathbb N^+}\in\mathcal M_{\hbar,\gg}(U)$ be arbitrarily fixed. From Proposition \ref{thm-3}, we know that the claim $(ii)$ implies that the Riccati-type equation \eqref{1014-3} has a solution $\{P_\ell\}_{\ell=0}^\hbar$.
We only need to prove this solution  is unique. Suppose $\{\widetilde P_\ell\}_{\ell=0}^\hbar$ is  a solution
of the Riccati-type equation \eqref{1014-3}. By Lemma \ref{lem-3-2}, it suffices to prove that
\begin{equation}\label{yu-3-5-5-1}
V(x_0;\ell)=\langle \widetilde P_\ell x_0, x_0\rangle_H, \,\forall\,x_0\in H,\,\forall\, 0\leq \ell\leq \hbar,
\end{equation}
    where $V(\cdot;\ell)$ is defined by (\ref{1015-9}).
\par
Let $x_0\in H$ and $0\leq \ell\leq \hbar$ be arbitrarily fixed.
Then, for any $u=(u_{j+\ell})_{j\in\mathbb N^+}\in \mathcal U_{ad}(x_0;\ell)$ and $k> \ell$,
it stands that
\begin{equation}\label{1105-1}
x(t_{k};x_0,u,\ell)=\mathrm e^{A(t_{k}-t_{k-1})}x(t_{k-1}^+;x_0,u,\ell)
\end{equation}
and
\begin{equation}\label{1105-1-1}
x(t_{k}^+;x_0,u,\ell)=\mathrm e^{A(t_{k}-t_{k-1})}x(t_{k-1}^+;x_0,u,\ell)+B_{k} u_{k},
\end{equation}
    where $x(\cdot;x_0,u,\ell)$ is the solution of the equation (\ref{1014-8}) with $v=u$.
Denote $x(\cdot):=x(\cdot;x_0,u,\ell)$ for simplicity.
By \eqref{1105-1}, (\ref{Intro-2}) and  the fact that $\{\widetilde P_\ell\}_{\ell=0}^\hbar$ satisfies  the equation \eqref{1014-3}, we obtain that, for each $k>\ell$,
\begin{eqnarray*}\label{1105-2-b}
&&\langle \widetilde P_{\nu(k-1)} x(t_{k-1}^+), x(t_{k-1}^+)\rangle_H -\langle \widetilde P_{\nu(k)} \mathrm e^{A(t_{k}-t_{k-1})}x(t_{k-1}^+), \mathrm e^{A(t_{k}-t_{k-1})}x(t_{k-1}^+)\rangle_H\nonumber\\
&=& \langle Q_{k}\mathrm e^{A(t_{k}-t_{k-1})}x(t_{k-1}^+),  \mathrm e^{A(t_{k}-t_{k-1})}x(t_{k-1}^+)\rangle_H\nonumber\\
&&-\langle (R_{k}+B_{\nu(k)}^*\widetilde P_{\nu(k)}B_{\nu(k)})^{-1}
                      B_{\nu(k)}^*\widetilde P_{\nu(k)}\mathrm e^{A(t_{k}-t_{k-1})}x(t_{k-1}^+),B_{\nu(k)}^*\widetilde P_{\nu(k)}\mathrm e^{A(t_{k}-t_{k-1})}x(t_{k-1}^+)\rangle_U\nonumber\\
&=&\langle Q_{k}x(t_{k}),  x(t_{k})\rangle_H
      -\langle (R_{k}+B_{\nu(k)}^*\widetilde P_{\nu(k)}B_{\nu(k)})^{-1} B_{\nu(k)}^*\widetilde P_{\nu(k)}x(t_{k}),B_{\nu(k)}^*\widetilde P_{\nu(k)}x(t_{k})\rangle_U.
\end{eqnarray*}
This, along with \eqref{1105-1-1}, gives that
\begin{eqnarray}\label{1105-2}
&&\langle Q_{k}x(t_{k}),  x(t_{k})\rangle_H+\langle R_{k} u_{k}, u_{k}\rangle_U\nonumber\\
&=&\langle \widetilde P_{\nu(k-1)} x(t_{k-1}^+), x(t_{k-1}^+)\rangle_H-\langle\widetilde  P_{\nu(k)} x(t_{k}^+), x(t_{k}^+)\rangle_H
+\langle (R_{k}+B_{\nu(k)}^*\widetilde P_{\nu(k)} B_{\nu(k)})u_{k}, u_{k}\rangle_U \nonumber\\
&&+\langle (R_{k}+B_{\nu(k)}^*\widetilde P_{\nu(k)}B_{\nu(k)})^{-1} B_{\nu(k)}^* \widetilde P_{\nu(k)}x(t_{k}),B_{\nu(k)}^*\widetilde P_{\nu(k)}x(t_{k}) \rangle_U
    + 2\langle \widetilde P_{\nu(k)} B_{\nu(k)} u_{k}, x(t_{k}) \rangle_U\nonumber\\
&=&\langle \widetilde P_{\nu(k-1)} x(t_{k-1}^+), x(t_{k-1}^+)\rangle_H -\langle \widetilde P_{\nu(k)} x(t_{k}^+), x(t_{k}^+) \rangle_H \nonumber\\
&&+\left\|(R_{k}+B_{\nu(k)}^*\widetilde P_{\nu(k)} B_{\nu(k)})^{\frac{1}{2}}
     \left(u_{k}+(R_{k}+B_{\nu(k)}^*\widetilde P_{\nu(k)} B_{\nu(k)})^{-1} B_{\nu(k)}^* \widetilde P_{\nu(k)} x(t_{k}) \right) \right\|_U^2.
\end{eqnarray}
Since $u\in \mathcal U_{ad}(x_0)$, one can easily check that  $\lim_{j\to+\infty} \langle \widetilde{P}_{\nu(j)}x(t^+_j), x(t_j^+)\rangle_H=0$. Therefore, by (\ref{yu-b-3-4-1}) and \eqref{1105-2}, we can conclude that, for any $u=(u_{j+\ell})_{j\in\mathbb N^+}\in \mathcal U_{ad}(x_0;\ell)$,
\begin{eqnarray}\label{1105-3}
&\;&J(u;x_0,\ell)\nonumber\\
&=&\sum_{k=\ell+1}^{+\infty} \left( \langle Q_{k}x(t_{k}),  x(t_{k})\rangle_H+\langle R_{k} u_{k}, u_{k}\rangle_U \right)\nonumber\\
&=&\langle \widetilde P_\ell x_0, x_0\rangle_H \nonumber\\
&&+\sum_{k=\ell+1}^{+\infty}
 \left\|(R_k+B_{\nu(k)}^*\widetilde P_{\nu(k)} B_{\nu(k)})^{\frac{1}{2}}\left(u_{k}+(R_k+B_{\nu(k)}^* \widetilde P_{\nu(k)} B_{\nu(k)})^{-1} B_{\nu(k)}^*\widetilde  P_{\nu(k)} x(t_{k}) \right)\right \|^2_U.\quad
 \end{eqnarray}
 This, together with the definition of $V(\cdot;\ell)$, implies that
\begin{equation}\label{yu-3-5-7}
     V(x_0;\ell)\geq \langle \widetilde P_\ell x_0,x_0\rangle_H.
\end{equation}
    Further, we define $u^*=(u^*_{j+\ell})_{j\in\mathbb{N}^+}$ as
$$
    u^*_k:=-(R_k+B_{\nu(k)}^* \widetilde P_{\nu(k)} B_{\nu(k)})^{-1} B_{\nu(k)}^*\widetilde  P_{\nu(k)} x(t_{k};x_0,u^*,\ell)\;\;\mbox{for any}\;\;k\in\mathbb{N}^+.
$$
     By similar  proofs of (\ref{1105-3}), one can show that
\begin{equation}\label{yu-3-5-8}
    J(u^*;x_0,\ell)=\langle \widetilde{P}_\ell x_0,x_0\rangle_H.
\end{equation}
    This, along with the facts $(Q_j)_{j\in\mathbb N^+}\in\mathcal M_{\hbar,\gg}(H)$ and $(R_j)_{j\in\mathbb N^+}\in\mathcal M_{\hbar,\gg}(U)$, yields that
\begin{equation*}\label{yu-3-5-9}
    (x(t_{j+\ell};x_0,u^*,\ell))_{j\in\mathbb{N}^+} \in l^2(\mathbb{N}^+;H)
    \;\;\mbox{and}\;\;u^*\in l^2(\mathbb{N}^+;U).
\end{equation*}
    These imply that $u^*\in \mathcal{U}_{ad}(x_0;\ell)$ and by (\ref{yu-3-5-8}),
\begin{equation}\label{yu-3-5-10}
    V(x_0;\ell)\leq \langle \widetilde{P}_\ell x_0,x_0\rangle_H.
\end{equation}
    Therefore, by (\ref{yu-3-5-7}), (\ref{yu-3-5-10}) and the arbitrariness of $x_0$ and $\ell$, we have (\ref{yu-3-5-5-1}), i.e., the claim $(iii)$ holds.
\vskip 5pt
    \emph{Step 3. The proof of $(iii)\Rightarrow (iv)$.} It is trivial.
\vskip 5pt
    \emph{Step 4. The proof of $(iv)\Rightarrow(i)$.}
\par
    Given $(Q_j)_{j\in\mathbb N^+}\in \mathcal{M}_{\hbar,\gg}(H)$ and $(R_j)_{j\in\mathbb N^+}\in\mathcal{M}_{\hbar,\gg}(U)$ so that the claim $(iv)$ is true.
Then, by the definitions of $\mathcal{M}_{\hbar,\gg}(H)$ and $\mathcal{M}_{\hbar,\gg}(U)$, there exist constants $q>0$ and $\delta>0$ so that
\begin{equation}\label{yu-3-5-bb-1}
    Q_j-q I>0\;\;\mbox{and}\;\; R_j-\delta I>0\;\;\mbox{for each}\;\;j\in\mathbb N^+.
\end{equation}
Let  $\{P_\ell\}_{\ell=1}^\hbar$ be  the unique solution of the equation \eqref{1014-3}.
Consider the following equation
\begin{equation}\label{1109-1}
\begin{cases}
  \displaystyle x'(t)=Ax(t),   & t\in\mathbb R^+\backslash \Lambda_\hbar,   \\
  x(t_j^+)=x(t_j)+B_{\nu(j)} F_{\nu(j)} x(t_j),   &j\in\mathbb{N}^+,   \\
\end{cases}
\end{equation}
where
\begin{equation}\label{yu-bb-3-7-h}
F_{j}:=-(R_j+B_{j}^*P_jB_{j})^{-1}B_{j}^*P_j,\;\;\forall\,1\leq j \leq \hbar.
\end{equation}
    Denote the solution of \eqref{1109-1} by $x_\mathcal F(\cdot;x_0)$
with initial data $x_0\in H$. Let $\Phi_{\hbar}:\,H\to H$ be defined as
\begin{equation}\label{1109-2}
\Phi_{\hbar}:= (I+B_\hbar F_{\hbar})\mathrm e^{A(t_{\hbar}-t_{\hbar-1})}\cdots (I+B_{2}F_{2})\mathrm e^{A(t_2-t_1)}(I+B_1 F_1)\mathrm e^{At_1}.
\end{equation}
   It is clear that  $\Phi_{\hbar}\in \mathcal{L}(H)$.     By (\ref{1109-2}) and (\ref{Intro-2}), one can easily check that, for any $x_0\in H$,
\begin{equation}\label{yu-bb-3-7-1}
x_\mathcal F(t_{k\hbar}^+;x_0)=(\Phi_{\hbar})^k x_0 \textrm{ for each }k\in\mathbb N^+.
\end{equation}
By (\ref{yu-3-5-bb-1}) and the same arguments as those lead to \eqref{yu-3-5-10}, we know that
\begin{eqnarray*}
\sum_{k=1}^{+\infty} \|x_\mathcal F(t_{k\hbar}^+;x_0) \|^2_H
&\leq&
\|I+B_\hbar F_\hbar\|_{\mathcal L(H) }^2\sum_{k=1}^{+\infty}\|x_\mathcal F(t_{k\hbar};x_0) \|_H^2 \\
&\leq& \frac{1}{q}\|(I+B_\hbar F_\hbar)\|_{\mathcal L(H)}^2\langle P_0 x_0,x_0\rangle_H
    \;\;\mbox{for any}\;\;x_0\in H.
\end{eqnarray*}
    This, together with (\ref{yu-bb-3-7-1}), yields that
\begin{equation}\label{yu-5-6-2}
\begin{split}
\sum_{k=1}^{+\infty} \|(\Phi_{\hbar})^k x_0\|^2_H
\leq C_{\hbar,q} \|x_0\|^2_H
\;\;\mbox{for any}\;\;x_0\in H,
\end{split}
\end{equation}
    where $C_{\hbar,q}:=\frac{1}{q}  \|I+B_\hbar F_\hbar\|_{\mathcal L(H)}^2 \|P_0\|_{\mathcal L(H)}$. It follows that
\begin{equation}\label{0205-4}
    \sup_{k\in\mathbb{N}^+}\|(\Phi_{\hbar})^k\|_{\mathcal{L}(H)}\leq \sup_{\|x_0\|_H=1}
    \left(\sum_{k=1}^{+\infty}\|(\Phi_{\hbar})^kx_0\|^2_H\right)^{\frac{1}{2}}
    \leq\sqrt{C_{\hbar,q}}.
\end{equation}
    Thus, by (\ref{yu-5-6-2}) and (\ref{0205-4}), for each $k\in\mathbb{N}^+$ and any $x_0\in H$,
\begin{eqnarray}\label{yu-5-6-1}
    k\|(\Phi_{\hbar})^kx_0\|^2_H&=&\sum_{j=1}^k\|(\Phi_{\hbar})^kx_0\|_H^2
    \leq \sum_{j=1}^k\|(\Phi_{\hbar})^{k-j}\|_{\mathcal{L}(H)}^2\|(\Phi_{\hbar})^jx_0\|_H^2\nonumber\\
    &\leq&C_{\hbar,q}\sum_{j=1}^k\|(\Phi_{\hbar})^jx_0\|_H^2\leq C_{\hbar,q}^2\|x_0\|^2.
\end{eqnarray}
    Let $k_{\hbar,q}:=[4C_{\hbar,q}^2]+1$. By (\ref{yu-5-6-1}), we have
\begin{equation}\label{yu-5-6-3}
    \|(\Phi_{\hbar})^{k}\|_{\mathcal{L}(H)}\leq \frac{1}{2}\;\;\mbox{for each}\;\;k\geq k_{\hbar,q}.
\end{equation}
    Fix $\widehat{k}\geq k_{\hbar,q}$. Then, by (\ref{yu-5-6-3}), we have $\|(\Phi_{\hbar})^{\widehat{k}}\|_{\mathcal{L}(H)}\leq \frac{1}{2}$. It follows that
\begin{equation*}\label{yu-3-7-8}
     \|(\Phi_{\hbar})^{n\widehat{k}}\|_{\mathcal{L}(H)}\leq \left(\frac{1}{2}\right)^n
     \;\;\mbox{for each}\;\;n\in\mathbb{N}^+.
\end{equation*}
    This gives that
\begin{equation*}\label{yu-3-7-9}
    \limsup_{n\to+\infty}\|(\Phi_{\hbar})^{n\widehat{k}}
    \|_{\mathcal{L}(H)}^{\frac{1}{n\widehat{k}}}\leq \left(\frac{1}{2}\right)^{\frac{1}{\widehat{k}}}<1.
\end{equation*}
    This implies that, there exists a $n^*\in \mathbb{N}^+$ so that
\begin{equation*}\label{yu-3-7-10}
    \|(\Phi_{\hbar})^{n^*\hat{k}}\|_{\mathcal{L}(H)}\leq e^{-\alpha t_{n^*\widehat{k}\hbar}},
\end{equation*}
    where $\alpha=\alpha(n^*,\widehat{k}):=t^{-1}_{n^*\widehat{k}\hbar}\ln\left(2\left(1+\left(\frac{1}{2}
    \right)^{\widehat{k}}\right)\right)(>0)$.
    This, together with (\ref{yu-bb-3-7-1}) and
    (\ref{Intro-2}), yields that, for any $x_0\in H$,
\begin{equation}\label{yu-3-7-11}
    \|x_{\mathcal{F}}(t^+_{jn^*\widehat{k}\hbar};x_0)\|_H
    \leq e^{-j\alpha t_{n^*\widehat{k}\hbar}}\|x_0\|_H
    =e^{-\alpha t_{jn^*\widehat{k}\hbar}}\|x_0\|_H
    \;\;\mbox{for each}\;\;j\in\mathbb{N}^+.
\end{equation}
    One can easily check that, for any $x_0\in H$,
\begin{equation}\label{yu-3-7-12}
    \|x_{\mathcal F}(t;x_0)\|_H\leq C_{n^*,\widehat{k}}\|x_0\|_H\;\;\mbox{for all}\;\;t\in[0,t_{n^*\widehat{k}\hbar}],
\end{equation}
    where $C_{n^*,\widehat{k}}>0$ is a constant.  Let $t\in\mathbb{R}^+$ be arbitrarily fixed. It is clear that  there is $j^*\in \mathbb{N}$ so that $t_{j^*n^*\widehat{k}\hbar}<t\leq
    t_{(j^*+1)n^*\widehat{k}\hbar}$. Thus, by (\ref{yu-3-7-11}) and (\ref{yu-3-7-12}), we have that, for any $x_0\in H$,
\begin{eqnarray*}\label{yu-3-7-13}
    \|x_{\mathcal F}(t; x_0)\|_H
&=&\|x_{\mathcal F}(t-t_{j^*n^*\widehat{k}\hbar}; x_{\mathcal F}(t_{j^*n^*\widehat{k}\hbar}^+;x_0))\|_H
\leq C_{n^*,\widehat{k}}\|x_{\mathcal F}(t_{j^*n^*\widehat{k}\hbar}^+;x_0)\|_H\nonumber\\
&\leq&C_{n^*,\widehat{k}}\mathrm e^{\alpha (t-t_{j^*n^*\widehat{k}\hbar})}
\mathrm e^{-\alpha t}\|x_0\|_H
\leq C_{n^*,\widehat{k}}\mathrm e^{\alpha t_{n^*\widehat{k}\hbar}}
\mathrm e^{-\alpha t}\|x_0\|_H.
\end{eqnarray*}
    This, together with the arbitrariness of $t$, implies the system (\ref{Intro-3}) is exponentially stable with the feedback law
    $\mathcal{F}$ defined by (\ref{yu-bb-3-7-h}). Thus, the claim $(i)$ holds and the corresponding feedback law can be given as (\ref{1014-6}) (see (\ref{yu-bb-3-7-1})).
\par
  In summary, we complete the proof.

\subsection{The proof of Theorem \ref{yu-main-theorem}}

    The proof is divided into 4 steps.
\vskip 5pt
    \emph{Step 1. The proof of $(i)\Rightarrow (ii)$.}
\par
    Suppose that the system $[A,\{B_k\}_{k=1}^\hbar, \Lambda_\hbar]$ is exponentially $\hbar$-stabilizable.
    Then, there is a feedback law $\mathcal{F}=\{F_j\}_{j=1}^{\hbar}\subset \mathcal{L}(H;U)$
    so that the solution $x_{\mathcal{F}}(\cdot)$ of the closed-loop system (\ref{Intro-4(2)}) verifies
\begin{equation}\label{yu-3-8-2}
    \|x_{\mathcal{F}}(t)\|_H\leq C_1\mathrm e^{-\mu t}\|x_{\mathcal{F}}(0)\|_H
    \;\;\mbox{for any}\;\;t\in\mathbb{R}^+,
\end{equation}
    where $C_1$ and $\mu$ are two positive constants.
    We fix the feedback law $\mathcal{F}$. Since $x_{\mathcal{F}}(t_j^+)=(I+B_{\nu(j)}F_{\nu(j)})x_{\mathcal{F}}(t_j)$
     for each $j\in\mathbb{N}^+$, by (\ref{yu-3-8-2}), we have
\begin{equation}\label{yu-3-8-3}
    \|x_{\mathcal{F}}(t_j^+)\|_H\leq C_2 \mathrm e^{-\mu t_j}\|x_{\mathcal{F}}(0)\|_H
    \;\;\mbox{for any}\;\;j\in\mathbb{N}^+,
\end{equation}
    where $C_2:=C_1\sup_{1\leq k\leq \hbar}\|I+B_kF_k\|_{\mathcal{L}(H)}$.
\par
    Let $\sigma\in(0,1)$ and $x_0\in H$ be arbitrarily fixed. It is obvious that there is a
    $\widehat{k}=\widehat{k}(\sigma)\in\mathbb{N}^+$ so that
\begin{equation}\label{yu-3-8-4}
    C_2\mathrm e^{-\mu t_{\widehat{k}}}\leq \sigma.
\end{equation}
     We denote the closed-loop system (\ref{Intro-4(2)}) with the initial data $x_0$ by $x_{\mathcal{F}}(\cdot;x_0)$.
    Let
    $u_j(x_0):=F_{\nu(j)}x_{\mathcal{F}}(t_j;x_0)$ for each $j\in\mathbb{N}$ and $u(x_0):=(u_j(x_0))_{j\in\mathbb{N}^+}$ in the control system (\ref{Intro-3}). By (\ref{yu-3-8-2}), one can easily check that
\begin{equation}\label{yu-3-8-10}
    \|u_j(x_0)\|_U\leq C_3 \mathrm e^{-\mu t_j}\|x_0\|_H \;\;\mbox{for each}\;\;j\in\mathbb{N}^+,
\end{equation}
    where $C_3:=C_1\max_{1\leq k\leq \hbar}\|F_k\|_{\mathcal{L}(H;U)}$. It should be noted that, by (\ref{Intro-2}),
\begin{equation*}\label{yu-3-8-11}
    \sum_{j=1}^{+\infty}\mathrm e^{-2\mu t_j}\leq \max_{1\leq k\leq \hbar}\frac{1}{|t_k-t_{k-1}|}
    \int_0^{+\infty}\mathrm e^{-2\mu s}ds =\frac{1}{2\mu}\max_{1\leq k\leq \hbar} \frac{1}{|t_k-t_{k-1}|}.
\end{equation*}
    This, along with (\ref{yu-3-8-10}), yields that
\begin{equation}\label{yu-3-8-12}
    \|u(x_0)\|_{l^2(\mathbb{N}^+;U)}=\left(\sum_{j=1}^{+\infty}\|u_j(x_0)\|^2_U\right)^{\frac{1}{2}}
    \leq C_4\|x_0\|_H\;\;\mbox{for any}\;\;x_0\in H,
\end{equation}
    where $C_4:=C_3\left(\frac{1}{2\mu}\max_{1\leq k\leq \hbar}\frac{1}{|t_k-t_{k-1}|}\right)^{\frac{1}{2}}$.  Moreover, one can easily check that
\begin{equation*}\label{yu-3-8-5}
    x(t_k^+;x_0,u(x_0))=x_{\mathcal{F}}(t_k^+;x_0)\;\;\mbox{for each}\;\;k\in\mathbb{N}^+.
\end{equation*}
    This, together with (\ref{yu-3-8-3}) and (\ref{yu-3-8-4}), implies that
\begin{equation}\label{yu-3-8-6}
    \|x(t_{\widehat{k}}^+;x_0,u(x_0))\|_H\leq \sigma \|x_0\|_H.
\end{equation}
    Furthermore, it is obvious that
\begin{equation*}\label{yu-3-8-7}
    x(t_{\widehat{k}}^+;x_0,u(x_0))=\mathrm e^{At_{\widehat{k}}}x_0+\sum_{j=1}^{\widehat{k}}
    \mathrm e^{A(t_{\widehat{k}}-t_j)} B_{\nu(j)}u_j(x_0).
\end{equation*}
    Hence,
\begin{equation*}\label{yu-3-8-13}
    \langle x(t^+_{\widehat{k}};x_0,u(x_0)),\varphi\rangle_H
    =\langle x_0, \mathrm{e}^{A^*t_{\widehat{k}}}\varphi\rangle_H+\sum_{j=1}^{\widehat{k}}\langle u_j(x_0),B^*_{\nu(j)}\mathrm{e}^{A^*(t_{\widehat{k}}-t_j)}\varphi\rangle_U\;\;\mbox{for any}
    \;\;\varphi\in H.
\end{equation*}
    This, along with (\ref{yu-3-8-12}) and (\ref{yu-3-8-6}), yields that
\begin{equation*}\label{yu-3-8-14}
    \langle x_0, \mathrm e^{A^*t_{\widehat{k}}}\varphi\rangle_H
    \leq \sigma\|x_0\|_H\|\varphi\|_H+C_4\|x_0\|_H
    \left(\sum_{j=1}^{\widehat{k}}\|B^*_{\nu(j)}
    \mathrm{e}^{A^*(t_{\widehat{k}}-t_j)}\varphi\|_H^2\right)^{\frac{1}{2}}
    \;\;\mbox{for any}\;\;\varphi\in H.
\end{equation*}
    This, together with the arbitrariness of $x_0\in H$, implies (\ref{yu-3-8-1}) with $C(\sigma):=C_4$. Therefore, the claim $(ii)$ is true.
\vskip 5pt

 \emph{Step 2. The proof of $(ii)\Rightarrow (iii)$.}
\par
     Suppose that the claim $(ii)$ is true.
     We take $\widehat{\sigma}\in(0,\min\{1,(\max_{0\leq \ell\leq \hbar-1}\|e^{A^*(t_{\hbar}-t_{\ell})}\|_{\mathcal{L}(H)})^{-1}\})$ arbitrarily.
     By the claim $(ii)$, there are $\widehat{k}=\widehat{k}(\widehat \sigma)\in \mathbb{N}^+$ and $C(\widehat{\sigma})>0$
     so that (\ref{yu-3-8-1}) holds with $\sigma=\widehat{\sigma}$ and $C(\sigma)=C(\widehat{\sigma})$.
    We take $k^*\in\mathbb{N}^+$ so that $(k^*-1)\hbar\leq \widehat{k}< k^*\hbar$.
    Thus, by (\ref{yu-3-8-1}), we have
\begin{eqnarray*}\label{yu-3-9-1}
    \|\mathrm e^{A^*t_{k^*\hbar}}\varphi\|_H&\leq& C(\widehat{\sigma})
    \left(\sum_{j=1}^{\widehat{k}}\|B^*_{\nu(j)}\mathrm{e}^{A^*(t_{k^*\hbar}-t_j)}\varphi
    \|_U^2\right)^{\frac{1}{2}}+\widehat{\sigma}\|\mathrm{e}^{A^*(t_{k^*\hbar}
    -t_{\widehat{k}})}\varphi\|_H\nonumber\\
    &\leq& C(\widehat{\sigma})
    \left(\sum_{j=1}^{k^*\hbar-1}\|B^*_{\nu(j)}\mathrm{e}^{A^*(t_{k^*\hbar}-t_j)}\varphi
    \|_U^2\right)^{\frac{1}{2}}+ \widehat{\sigma}\max_{0\leq \ell\leq \hbar-1}\|\mathrm{e}^{A^*(t_{\hbar}-t_{\ell})}\|_{\mathcal{L}(H)}\|\varphi\|_H\nonumber\\
    &\leq& C(\widehat{\sigma})
    \left(\sum_{j=1}^{k^*\hbar-1}\|B^*_{\nu(j)}\mathrm{e}^{A^*(t_{k^*\hbar}-t_j)}\varphi
    \|_U^2\right)^{\frac{1}{2}}+ \sigma\|\varphi\|_H,
\end{eqnarray*}
    where $\sigma:=\widehat{\sigma}\max_{0\leq l\leq \hbar-1}\|\mathrm{e}^{A^*(t_{\hbar}-t_{\ell})}\|_{\mathcal{L}(H)}$. It is clear that
    $\sigma\in(0,1)$. Thus, the claim $(iii)$ is true with $k=k^*$ and $C=C(\widehat{\sigma})$.
\vskip 5pt
    \emph{Step 3. The proof of $(iii)\Rightarrow (iv)$.}
\par
    Assume that $(iii)$ holds. We take $\sigma\in (0,1)$, $k\in\mathbb{N}^+$ and $C>0$ so that (\ref{0315-1}) holds. Now, we prove $(iv)$ is true. We take
     $x_0\in H$ arbitrarily and, for each $\varepsilon>0$, introduce  a functional $\mathcal{J}_\varepsilon
     :H\to \mathbb{R}$ as follows:
\begin{equation*}\label{yu-5-6-5}
    \mathcal{J}_\varepsilon(\varphi):=\frac{1}{2}\sum_{j=1}^{k\hbar-1}\|B^*_{\nu(j)}
    e^{A^*(t_{k\hbar}-t_j)}\varphi\|_U^2+\langle \varphi, e^{A^*t_{k\hbar}}x_0\rangle_H
    +(\sigma\|x_0\|_H+\varepsilon)\|\varphi\|_H\textrm{ \,\,for each }\varphi\in H.
\end{equation*}

  First, we show that $\mathcal J_\varepsilon$ is coercive. For this purpose, given any sequence $\{\varphi_n\}_{n\in\mathbb{N}^+}\subset H\setminus\{0\}$ with $\|\varphi_n\|_H\to +\infty$ as $n\to+\infty$, we only need to prove that
\begin{equation}\label{yu-5-6-6}
    \liminf_{n\to+\infty}\frac{\mathcal{J}_\varepsilon(\varphi_n)}{\|\varphi_n\|_H}\geq \varepsilon.
\end{equation}
    Indeed,  let $\widehat{\varphi}_n:=
    \varphi_n/\|\varphi_n\|_H$. Then we have
\begin{equation}\label{yu-5-6-7}
    \frac{\mathcal{J}_\varepsilon(\varphi_n)}{\|\varphi_n\|_H}
    =\frac{1}{2}\|\varphi_n\|_H\sum_{j=1}^{k\hbar-1}\|B^*_{\nu(j)}
    e^{A^*(t_{k\hbar}-t_j)}\widehat{\varphi}_n\|_U^2+\langle \widehat{\varphi}_n,e^{A^*t_{k\hbar}}x_0\rangle_H+\sigma\|x_0\|_H+\varepsilon
\end{equation}
    for each $n\in\mathbb{N}^+$. There are only two cases may happen.
\par
    \emph{Case 1.} $\liminf_{n\to+\infty} \sum_{j=1}^{k\hbar-1}\|B^*_{\nu(j)}
    e^{A^*(t_{k\hbar}-t_j)}\widehat{\varphi}_n\|_U^2>0$. In this case, by (\ref{yu-5-6-7}), it is clear that
\begin{equation*}\label{yu-5-6-8}
    \liminf_{n\to+\infty}\frac{\mathcal{J}_\varepsilon(\varphi_n)}{\|\varphi_n\|_H}=+\infty.
\end{equation*}
    Thus, (\ref{yu-5-6-6}) is true.
\par
    \emph{Case 2.} $\liminf_{n\to+\infty} \sum_{j=1}^{k\hbar-1}\|B^*_{\nu(j)}
    e^{A^*(t_{k\hbar}-t_j)}\widehat{\varphi}_n\|_U^2=0$. Since $\|\widehat{\varphi}_n\|_H=1$ for each $n\in\mathbb{N}^+$, there is a subsequence of $\{\widehat{\varphi}_n\}_{n\in\mathbb{N}^+}$, still denoted  by the same way, and $\widehat{\varphi}$ so that
\begin{equation}\label{yu-5-6-b-8}
    \widehat{\varphi}_n\to \widehat{\varphi}\;\;\mbox{weakly in}\;\;H\;\;\mbox{as}\;\; n\to+\infty.
\end{equation}
     It is obvious that $\|\widehat{\varphi}\|_H\leq 1$. This, together with
    the assumption that  $\liminf_{n\to+\infty} \sum_{j=1}^{k\hbar-1}\|B^*_{\nu(j)}
    e^{A^*(t_{k\hbar}-t_j)}\widehat{\varphi}_n\|_U^2=0$, yields that
\begin{equation*}\label{yu-5-6-9}
    \sum_{j=1}^{k\hbar-1}\|B^*_{\nu(j)}
    e^{A^*(t_{k\hbar}-t_j)}\widehat{\varphi}\|_U^2=0.
\end{equation*}
    Hence, by (\ref{0315-1}), we have
\begin{equation*}\label{yu-5-6-10}
    0\leq (-\|e^{A^*t_{k\hbar}}\widehat{\varphi}\|_H+\sigma\|\widehat{\varphi}\|_H)\|x_0\|_H
    \leq \langle \widehat{\varphi}, e^{At_{k\hbar}}x_0\rangle_H+\sigma\|\widehat{\varphi}\|_H\|x_0\|_H.
\end{equation*}
    This, along with (\ref{yu-5-6-7}) and (\ref{yu-5-6-b-8}), gives that (\ref{yu-5-6-6}) holds in this case.

     In summary, we have proved that
    $\mathcal{J}_\varepsilon$ is coercive for any $\varepsilon>0$. It is obvious that $\mathcal J_\varepsilon$ is continuous and convex.
    Thus, for each $\varepsilon>0$, $\mathcal{J}_\varepsilon$ has a minimizer $\varphi^*_\varepsilon\in H$. Therefore,
   after some simple calculations, we have that   for any $\xi\in H$,
\begin{eqnarray}\label{yu-5-6-13}
    0\leq \lim_{\lambda\to0^+}\frac{\mathcal{J}_\varepsilon(\varphi^*_\varepsilon+\lambda\xi)
    -\mathcal{J}_\varepsilon(\varphi^*_\varepsilon)}{\lambda}
    &=&\langle e^{At_{k\hbar}}x_0,\xi\rangle_H+\sum_{j=1}^{k\hbar-1}\left\langle B^*_{\nu(j)}e^{A^*(t_{k\hbar}-t_j)}\varphi^*_\varepsilon, B^*_{\nu(j)}e^{A^*(t_{k\hbar}-t_j)}\xi\right\rangle_U\nonumber\\
    &\;&+
\begin{cases}
    (\sigma\|x_0\|_H+\varepsilon)
    \left\langle\frac{\varphi^*_\varepsilon}{\|\varphi^*_\varepsilon\|_H},\xi\right\rangle_H&\mbox{if}\;\;
    \varphi^*_\varepsilon\neq 0,\\
    (\sigma\|x_0\|_H+\varepsilon)\|\xi\|_H&\mbox{if}\;\;\varphi^*_\varepsilon=0.
\end{cases}
\end{eqnarray}

\par
    Let $u^*_\varepsilon:=(B^*e^{A^*(t_{k\hbar}-t_1)}\varphi^*_\varepsilon,
    B^*e^{A^*(t_{k\hbar}-t_2)}\varphi^*_\varepsilon,\cdots,
    B^*e^{A^*(t_{k\hbar}-t_{k\hbar-1})}\varphi^*_\varepsilon,0,\cdots)$. One can easily check that
\begin{equation}\label{yu-5-6-12}
    \langle x(t_{k\hbar};x_0,u^*_\varepsilon),\xi\rangle_H
    =\langle e^{At_{k\hbar}}x_0,\xi\rangle_H+\sum_{j=1}^{k\hbar-1}\left\langle B^*_{\nu(j)}e^{A^*(t_{k\hbar}-t_j)}\varphi^*_\varepsilon, B^*_{\nu(j)}e^{A^*(t_{k\hbar}-t_j)}\xi\right\rangle_U
\end{equation}
    for any $\xi\in H$.   This, together with (\ref{yu-5-6-13}), yields that
\begin{equation*}\label{yu-5-6-14}
    |\langle x(t_{k\hbar};x_0,u^*_\varepsilon),\xi\rangle_H|\leq (\sigma\|x_0\|_H+\varepsilon)\|\xi\|_H\;\;\mbox{for any}\;\;\xi\in H,
\end{equation*}
    which implies that
\begin{equation}\label{yu-5-6-15}
    \|x(t_{k\hbar};x_0,u^*_\varepsilon)\|_H\leq \sigma\|x_0\|_H+\varepsilon.
\end{equation}
    Moreover, by the definition of $u^*_\varepsilon$, it is obvious that
\begin{equation}\label{yu-5-6-16}
    \|u^*_\varepsilon\|_{l^2(\mathbb{N}^+;U)}^2=\sum_{j=1}^{k\hbar-1}
    \|B^*_{\nu(j)}e^{A^*(t_{k\hbar}-t_j)}\varphi^*_\varepsilon\|_U^2.
\end{equation}
    Thus, by (\ref{0315-1}), we have
\begin{equation*}\label{yu-5-8-1}
    -C\|u_\varepsilon^*\|_{l^2(\mathbb{N}^+;U)}\|x_0\|_H
    \leq (\sigma\|\varphi^*_\varepsilon\|_H-\|e^{A^*t_{k\hbar}}\varphi_\varepsilon^*\|_H)
    \|x_0\|_H
    \leq \sigma\|\varphi^*_\varepsilon\|_H\|x_0\|_H
    +\langle \varphi_\varepsilon^*,e^{At_{k\hbar}}x_0\rangle_H.
\end{equation*}
    This, together with (\ref{yu-5-6-16}) and the facts that  $\varepsilon>0$ and $\mathcal{J}_\varepsilon(\varphi^*_\varepsilon)\leq \mathcal{J}_\varepsilon(0)=0$, implies that
\begin{equation*}\label{yu-5-8-2}
    \|u^*_\varepsilon\|^2_{l^2(\mathbb{N}^+;U)}-2C\|u^*_\varepsilon\|_{l^2(\mathbb{N}^+;U)}
    \|x_0\|_H\leq 0.
\end{equation*}
    Hence, by the arbitrariness of $\varepsilon>0$, we get
\begin{equation}\label{yu-5-8-3}
    \|u^*_\varepsilon\|_{l^2(\mathbb{N}^+;U)}\leq 2C\|x_0\|_H\;\;\mbox{for any}\;\;\varepsilon>0.
\end{equation}
    It follows that, there is a sequence $\{\varepsilon_n\}_{n\in\mathbb{N}^+}\subset(0,+\infty)$ with $\varepsilon_n\to 0$ as $n\to+\infty$, and $u^*\in l^2(\mathbb{N}^+;U)$
    so that
\begin{equation}\label{yu-5-8-4}
    u^*_{\varepsilon_n}\to u^*\;\;\mbox{weakly in}\;\;l^2(\mathbb{N}^+;U)
    \;\;\mbox{as}\;\;n\to+\infty.
\end{equation}
    By (\ref{yu-5-8-3}), it is clear that
\begin{equation}\label{yu-5-8-5}
    \|u^*\|_{l^2(\mathbb{N}^+;U)}\leq 2C\|x_0\|_H.
\end{equation}
     Moreover, by (\ref{yu-5-8-4}), one can easily check that
\begin{equation*}\label{yu-5-8-6}
    x(t_{k\hbar};x_0, u^*_{\varepsilon_n})\to x(t_{k\hbar};x_0, u^*)
    \;\;\mbox{weakly in}\;\;H\;\;\mbox{as}\;\;n\to+\infty.
\end{equation*}
    This, along with (\ref{yu-5-6-15}), yields that
\begin{equation}\label{yu-5-8-7}
    \|x(t_{k\hbar};x_0, u^*)\|_H\leq \sigma\|x_0\|_H.
\end{equation}
    Therefore, by (\ref{yu-5-8-5}) and (\ref{yu-5-8-7}), we  conclude that (\ref{yu-3-9-bbb-2}) is true with $u=u^*$.  Thus $(iv)$ holds.
\vskip 5pt

    \emph{Step 4. The proof of $(iv)\Rightarrow (i)$.}
\par
    To this end, by Theorem \ref{thm-1}, we only need to show
\begin{equation}\label{yu-3-9-10}
    \mathcal{U}_{ad}(x_0)\neq \emptyset \;\;\mbox{for any}\;\;x_0\in H,
\end{equation}
    where $\mathcal{U}_{ad}(\cdot)$ is defined by (\ref{1015-4}). Let $\sigma\in (0,1)$, $k\in\mathbb{N}^+$ and $C>0$ be such that the claim $(iv)$ holds.
\par
    Let $x_0\in H$  be arbitrarily fixed.  Write $U^{k\hbar}:=\underbrace{U\times U\times\cdots\times U}_{k\hbar}$.
    It is clear that, for any $w=(w_1,\cdots,w_{k\hbar})\in U^{k\hbar}$,
\begin{equation*}\label{yu-3-9-14}
    x(t_j;x_0,w)= \mathrm{e}^{A(t_j-t_{j-1})}x(t_{j-1};x_0,w)+\mathrm{e}^{A(t_j-t_{j-1})}B_{j-1}w_{j-1}\;\;\mbox{for each}\;\;j\in\{1,2,\ldots,k\hbar\}.
\end{equation*}
   Here and in what follows  we agree that $B_0=0$ and $w_0=0$. It follows that
\begin{equation*}\label{yu-3-9-15}
    \|x(t_j;x_0,w)\|_H^2\leq C_A\|x(t_{j-1};x_0,w)\|_H^2+C_{A,\mathcal B_\hbar}\|w_{j-1}\|_U^2\;\;\mbox{for each}\;\;j\in\{1,2,\ldots,k\hbar\},
\end{equation*}
    where
    $$C_A:=\max\left\{1,2\max_{1\leq l\leq \hbar}\|\mathrm{e}^{A(t_l-t_{l-1})}\|_{\mathcal{L}(H)}^2\right\}
    \textrm{ and }
   C_{A,\mathcal B_\hbar}:=2\max_{1\leq l\leq \hbar}\left(\|\mathrm{e}^{A(t_l-t_{l-1})}\|_{\mathcal{L}(H)}^2
    \|B_\ell\|_{\mathcal{L}(U;H)}^2\right).
    $$
    Therefore, for each $j\in \{1,2,\ldots,k\hbar\}$,
\begin{eqnarray}\label{yu-3-9-16}
             \|x(t_j;x_0,w)\|_H^2
    &\leq& C_A^{j}\|x_0\|^2+C_{A,\mathcal B_\hbar}\left(\sum_{\ell=1}^{j-1}C_A^{j-1-\ell}\|w_\ell\|_U^2\right) \nonumber\\
    &\leq& C_A^{k\hbar}\|x_0\|^2+C_{A,\mathcal B_\hbar} C_A^{k\hbar-1}
    \left(\sum_{\ell=1}^{k\hbar-1}\|w_\ell\|_U^2
    \right).
\end{eqnarray}
    (Here, we used the fact $C_A\geq 1$.)

    By the claim $(iv)$, there is a control $u=(u_j)_{j\in\mathbb{N}}\in l^2(\mathbb{N}^+;U)$ so that (\ref{yu-3-9-bbb-2}) is true.
    Define $v^1:=(v_1^1,v_2^1,... ,v_{k\hbar}^1)\in U^{k\hbar}$ by
\begin{equation*}\label{yu-3-9-11}
    v^1_j=u_j\;\;\mbox{for each}\;\;j\in\{1,2,\ldots, k\hbar-1\}  \textrm{ and }v^1_{k\hbar}=0.
\end{equation*}
    This, together with  (\ref{yu-3-9-bbb-2}), yields that
\begin{equation}\label{yu-3-9-12}
    \|x(t_{k\hbar}^+;x_0,v^1)\|_H=\|x(t_{k\hbar};x_0,v^1)\|_H=\|x(t_{k\hbar};x_0,u)\|_H\leq \sigma\|x_0\|_H
\end{equation}
    and
\begin{equation}\label{yu-3-9-13}
    \|v^1\|_{U^{k\hbar}}\leq \|u\|_{l^2(\mathbb{N}^+;U)}\leq C
    \|x_0\|_H.
\end{equation}
    By (\ref{yu-3-9-13}) and (\ref{yu-3-9-16}), it is obvious that
\begin{equation*}\label{yu-3-9-17}
    \sum_{j=1}^{k\hbar}\|x(t_j;x_0,v^1)\|_H^2\leq C_{A,\mathcal B_\hbar,k}\|x_0\|^2_H,
\end{equation*}
    where $C_{A,\mathcal B_\hbar,k}:=k\hbar\left(C_A^{k\hbar}+C^2C_{A,\mathcal B_\hbar}C_A^{k\hbar-1}\right)$.
\par
    Replacing $x_0$ by $x(t^+_{k\hbar};x_0,v^1)$ in the  above arguments, we can show that
    there exists $v^2:=(v_1^2,v_2^2,\cdots,v_{k\hbar}^2)\in U^{k\hbar}$ so that
\begin{equation*}\label{yu-3-9-18}
    \|v^2\|_{U^{k\hbar}}\leq C\|x(t^+_{k\hbar};x_0,v^1)\|_H
\end{equation*}
    and
\begin{equation*}\label{yu-3-9-19}
    \sum_{j=1}^{k\hbar}\|x(t_j;x(t_{k\hbar}^+;x_0,v^1),v^2)\|^2_H
    \leq C_{A,\mathcal B_\hbar,k}\|x(t^+_{k\hbar};x_0,v^1)\|_H^2.
\end{equation*}
    These, along with (\ref{yu-3-9-12}), imply that
\begin{equation*}\label{yu-3-9-20}
    \|v^2\|_{U^{k\hbar}}\leq C\sigma\|x_0\|_H
\end{equation*}
    and
\begin{equation*}\label{yu-3-9-21}
    \sum_{j=1}^{k\hbar}\|x(t_j;x(t_{k\hbar}^+;x_0,v^1),v^2)\|_H
    \leq C_{A,\mathcal B_\hbar,k}\sigma^2\|x_0\|^2_H.
\end{equation*}
    By the same way, we can deduce that, for each $l\in\mathbb{N}^+$ with $l\geq 2$, there is
    $v^l:=(v_1^l,v_2^l,\cdots,v_{k\hbar}^l)\in U^{k\hbar}$ so that
\begin{equation}\label{yu-3-9-22}
    \|v^l\|_{U^{k\hbar}}\leq C\sigma^{l-1}\|x_0\|_H
\end{equation}
    and
\begin{equation}\label{yu-3-9-23}
    \sum_{j=1}^{k\hbar}\|x(t_j;x_{l-1}(t_{k\hbar}^+),v^l)\|^2_H\leq C_{A,\mathcal B_\hbar,k}\sigma^{2(l-1)}\|x_0\|^2,
\end{equation}
    where
\begin{equation}\label{yu-3-9-24}
\begin{cases}
    x_1(t_{k\hbar}^+):=x(t_{k\hbar}^+;x_0,v^1),\\
    x_l(t_{k\hbar}^+)=x(t_{k\hbar}^+;x_{l-1}(t_{k\hbar}^+),v^l)\;\;\mbox{for each}\;\;l\geq 2.
\end{cases}
\end{equation}
\par
    Define
\begin{equation*}\label{yu-3-9-25}
    v:=(v_1^1,v_2^1,\cdots,v_{k\hbar}^1,v_1^2,v_2^2,\cdots,v_{k\hbar}^2,\cdots).
\end{equation*}
    Since $\sigma\in(0,1)$, by (\ref{yu-3-9-13}) and (\ref{yu-3-9-22}), we obtain
\begin{equation}\label{yu-3-9-26}
    \|v\|^2_{l^2(\mathbb{N}^+;U)}\leq \sum_{l=1}^\infty \|v^l\|_{U^{k\hbar}}^2\leq
    C^2\left(\sum_{l=1}^{+\infty}\sigma^{2(l-1)}\right)\|x_0\|_H^2<+\infty.
\end{equation}
    Moreover,  by \eqref{yu-3-9-24},  for any $l\in \mathbb{N}^+$, one can easily check that
\begin{equation*}\label{yu-3-9-27}
    x(t_j;x_0,v)=x(t_{j};x_{l-1}(t_{k\hbar}^+),v^l)\;\;\mbox{for each}\;\;
    j\in\{(l-1)\hbar+1,\ldots,l\hbar\}.
\end{equation*}
    This, together with (\ref{yu-3-9-23}), yields that
\begin{equation}\label{yu-3-9-28}
    \sum_{j=1}^{+\infty}\|x(t_j;x_0,v)\|_H^2
    \leq \sum_{l=1}^{+\infty}\sum_{j=1}^{k\hbar}\|x(t_j;x_{l-1}(t_{k\hbar}^+),v^l)\|_H^2
    \leq C_{A,\mathcal B_\hbar,k}\left(\sum_{l=1}^{\infty}\sigma^{2(l-1)}\right)\|x_0\|_H^2<+\infty.
\end{equation}
         Thus, by (\ref{yu-3-9-26}) and (\ref{yu-3-9-28}), we  conclude that
    $v\in l^2(\mathbb{N}^+;U)$ and $(x(t_j;x_0,v))_{j\in\mathbb{N}^+}\in l^2(\mathbb{N}^+;H)$,
    i.e., $v\in\mathcal{U}_{ad}(x_0)$. This, along with the arbitrariness of $x_0$, implies that (\ref{yu-3-9-10}) is true. Hence, the claim $(i)$ is true.
\par
    In summary, we complete our proof.
\vskip 5mm

\section{Application to the coupled heat system with impulse controls}

In this section, we consider the system of heat equations coupled by constant matrices, which is a special case of \eqref{Intro-3}.
 Let
$\Omega\subset \mathbb{R}^N (N\in \mathbb{N}^+)$ be a bounded domain with a
smooth boundary $\partial \Omega$.
Let $\omega_k\subset \Omega$
 $(1\leq k\leq \hbar)$ be an open and nonempty subset of $\Omega$
 with $\omega:=\displaystyle{\cap_{k=1}^{\hbar}} \omega_k\not=\emptyset$.
 Denote by $\chi_{E}$ the characteristic function of the set $E\subset\mathbb R^N$.
Write $\triangle_n:=I_n\triangle=\mbox{diag}(\underbrace{\triangle,\triangle,\cdots,\triangle}_n)$, where
$I_n$ is the identity matrix in $\mathbb{R}^{n\times n}$ and
$\triangle$ is the Laplace operator with domain $D(\triangle):=H_0^1(\Omega)\cap H^2(\Omega)$.
In this section, we consider the system \eqref{Intro-3} with
\begin{equation*}\label{Intro-1}
    A:=\triangle_n+S,\;\;D(A):=(H_0^1(\Omega)\cap H^2(\Omega))^n \textrm{ and }B_k:=\chi_{\omega_k}D_k \textrm{ for each }1\leq k\leq \hbar,
\end{equation*}
 where  $S\in \mathbb{R}^{n\times n}$ and $\{D_k\}_{k=1}^\hbar\subset \mathbb R^{n\times m} \,(n,m\in\mathbb N^+)$.
  We let $\mathcal{D}:=(D_1,D_2,\cdots,D_{\hbar})$. It is obvious that if we let $H:=(L^2(\Omega))^n$
  and $U:=(L^2(\Omega))^m$, then $H=H^*$ and $U=U^*$. Moreover,
 $(A,D(A))$ generates a $C_0$-semigroup $\{\mathrm e^{At}\}_{t\in\mathbb{R}^+}$
over $(L^2(\Omega))^n$.
For simplicity,  we denote the system
 \begin{equation}\label{0314-1}
 \begin{cases}
x_t-\triangle_n x-Sx=0,& \textrm{ in } \Omega\times (\mathbb{R}^+\backslash \Lambda_\hbar),\\
x=0, &\textrm{ on }\partial \Omega\times \mathbb{R}^+,\\
x(t_j^+)=x(t_j)+\chi_{\omega_{\nu(j)}} D_{\nu(j)} u_j,&\textrm{ in } \Omega,\, j\in \mathbb N^+
\end{cases}
\end{equation}
by  $[S,\{\chi_{\omega_k}D_k\}_{k=1}^{\hbar},\Lambda_\hbar]$  instead of $[A,\mathcal B_\hbar, \Lambda_\hbar]$ in this section,
where  $\Lambda_\hbar:=\{t_j\}_{j\in\mathbb N}\in \mathcal J_\hbar$.
We will prove the following stabilization  results for the system \eqref{0314-1}, which is an application of Theorem \ref{yu-main-theorem}.
\begin{theorem}\label{thm-2}
Given $S\in\mathbb R^{n\times n}$ and $\{D_k\}_{k=1}^\hbar\subset \mathbb R^{n\times m}$. The  following statements are equivalent.
\begin{itemize}
        \item[(i)]
                There exists $\Lambda_\hbar\in \mathcal J_\hbar$ so that  the system $[S,\{\chi_{\omega_k}D_k\}_{k=1}^\hbar,\Lambda_\hbar]$ is exponentially $\hbar$-stabilizable.
        \item[(ii)]
            $
                 \textrm{rank}\,(\lambda I_n-S, \mathcal{D})=n \textrm{ for any } \lambda\in\mathbb C \textrm{ with } \textrm{Re}(\lambda)\geq \lambda_1.
         $
       \item[(iii)]
                   $
                 \textrm{rank}\,(\lambda I_n-S, \mathcal{D})=n \textrm{ for any } \lambda\in\mathbb \sigma(S) \textrm{ with } \textrm{Re}(\lambda)\geq\lambda_1.
           $
\end{itemize}
  Here and throughout this section, $\lambda_1$ denotes the first eigenvalue of $-\triangle$ and $\sigma(S)$  the spectrum of the matrix $S$.
\end{theorem}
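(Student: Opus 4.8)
\emph{Plan.} The equivalence (ii)$\Leftrightarrow$(iii) is immediate: for $\lambda\notin\sigma(S)$ the matrix $\lambda I_n-S$ is invertible, so $\mathrm{rank}\,(\lambda I_n-S,\mathcal D)=n$ automatically, and the two rank conditions differ only over $\lambda\in\sigma(S)$. To relate this with (i) I would use Theorem \ref{yu-main-theorem} together with the commutation identity $\mathrm e^{At}=\mathrm e^{\triangle_n t}\mathrm e^{St}$ (legitimate since $\triangle_n$ and $S$ act on complementary tensor factors), which gives $\mathrm e^{A^{*}t}(e_i\otimes w)=\mathrm e^{-\lambda_i t}\,e_i\otimes\mathrm e^{S^{\top}t}w$ for the Dirichlet eigenfunctions $\{e_i\}$ of $-\triangle$ with eigenvalues $\{\lambda_i\}$, and $B_k^{*}=\chi_{\omega_k}D_k^{\top}$. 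A preliminary linear-algebra observation is that, by the Popov--Belevitch--Hautus test, (iii) is equivalent to: every eigenvalue $\mu$ of $S^{\top}$ carrying an eigenvector $\eta$ with $D_k^{\top}\eta=0$ for all $k$ (i.e.\ every uncontrollable mode of the pair $(S,\mathcal D)$) satisfies $\mathrm{Re}\,\mu<\lambda_1$.

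\emph{(i)$\Rightarrow$(iii), by contraposition.} If (iii) fails, pick such a $\mu$ with $\mathrm{Re}\,\mu\ge\lambda_1$ together with a corresponding $\eta\ne0$, and set $V:=\mathrm{span}_{\mathbb R}\{e_1\otimes\mathrm{Re}\,\eta,\ e_1\otimes\mathrm{Im}\,\eta\}$ (one- or two-dimensional). Then every $\mathrm e^{A^{*}t}$ leaves $V$ invariant, each $B_k^{*}$ vanishes on $V$, and $\det\!\big(\mathrm e^{A^{*}t_{k\hbar}}|_V\big)=\mathrm e^{2(\mathrm{Re}\,\mu-\lambda_1)t_{k\hbar}}\ge1$, so $\|\mathrm e^{A^{*}t_{k\hbar}}|_V\|\ge1$. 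Hence \eqref{0315-1} fails on $V$ for every $\sigma\in(0,1)$, $k$, $C$ and every $\Lambda_\hbar\in\mathcal J_\hbar$; by Theorem \ref{yu-main-theorem} the system is not exponentially $\hbar$-stabilizable for any choice of impulse instants, so (i) fails.

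\emph{(iii)$\Rightarrow$(i).} I would verify criterion (iv) of Theorem \ref{yu-main-theorem} for a well-chosen $\Lambda_\hbar$. Put $s(S):=\max_{\mu\in\sigma(S)}\mathrm{Re}\,\mu$. If $s(S)<\lambda_1$ then $\|\mathrm e^{At}\|_{\mathcal L(H)}\le C_\varepsilon\mathrm e^{(s(S)-\lambda_1+\varepsilon)t}$ decays and the zero feedback works; otherwise fix $\mu_0\in(0,\lambda_1-r^-)$ with $r^-:=\max\{\mathrm{Re}\,\mu:\mu\in\sigma(S),\ \mathrm{Re}\,\mu<\lambda_1\}$, let $E_u$ be the (finite-dimensional) spectral subspace of $A$ for the eigenvalues of real part $>-\mu_0$, $E_u^{*}$ its $A^{*}$-analogue, and $E_s$ the $A$-invariant complement, so $\|\mathrm e^{At}|_{E_s}\|_{\mathcal L(H)}\le C\mathrm e^{-(\mu_0/2)t}$. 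Choose $\Lambda_\hbar$ equally spaced, $t_j=j\tau$, with $\tau$ outside a certain countable set, so that within each fixed spatial frequency $e_i$ the numbers $\mathrm e^{\mu\hbar\tau}$, $\mu\in\sigma(S)$, are pairwise distinct. The core step is a finite-dimensional controllability claim: for a number $k_0$ of periods depending only on $n$, the impulse system projected to $E_u$ is exactly controllable on $[0,t_{k_0\hbar}]$. Dually, if $\psi\in E_u^{*}$ satisfies $B_{\nu(j)}^{*}\mathrm e^{A^{*}(t_{k_0\hbar}-t_j)}\psi=0$ for $j=1,\dots,k_0\hbar$, then, fixing one impulse instant and using linear independence of $\{e_i|_{\omega_k}\}$ (unique continuation for $-\triangle$), one gets $D_k^{\top}\mathrm e^{S^{\top}s}\psi_i=0$ at the $n$ sampling times $s$ attached to $\omega_k$; a generalized Vandermonde / quasi-polynomial argument (this is where non-resonance of $\tau$ enters) upgrades this to $D_k^{\top}\mathrm e^{S^{\top}s}\psi_i\equiv0$ for all $s$ and all $k$, so each $\psi_i$ lies in the uncontrollable subspace of $(S^{\top},\mathcal D^{\top})$, whose $S^{\top}$-spectrum is contained in $\{\mathrm{Re}<\lambda_1\}$; but membership of $\psi_i$ in the $i$-th slice of $E_u^{*}$ forces its $S^{\top}$-spectral support into $\{\mathrm{Re}\ge\lambda_1\}$, hence $\psi_i=0$ and $\psi=0$. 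Granting this, for $x_0=x_0^u+x_0^s$ one chooses controls $u_1,\dots,u_{k_0\hbar}$ steering $x_0^u$ to $0$ at $t_{k_0\hbar}^{+}$ with $\|u\|_{l^2}\le C\|x_0\|_H$ (the constant $C$ independent of the horizon, since the controls act only in the first $k_0$ periods) and $u_j=0$ for $j>k_0\hbar$; then $x^u(t_{k\hbar})=0$, while $\|x^s(t_{k\hbar})\|_H$ — the free decay of $x_0^s$ plus the tail of the early controls — is $\le C\mathrm e^{-(\mu_0/2)(k-k_0)\hbar\tau}\|x_0\|_H\le\sigma\|x_0\|_H$ with $\sigma<1$ once the number of periods $k$ is large. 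This is exactly \eqref{yu-3-9-bbb-2}, so Theorem \ref{yu-main-theorem} yields exponential $\hbar$-stabilizability for this $\Lambda_\hbar$, proving (i).

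\emph{Main obstacle.} The delicate point is the finite-dimensional observability of the $E_u$-part at the \emph{discrete} impulse instants: one must simultaneously separate the spatial modes $e_i$ (eigenfunction independence on the $\omega_k$) and invert the time sampling, which forces the non-resonance restriction on the step $\tau$; moreover the estimates must be organized — by confining the active controls to the first $O(n)$ periods — so that the constant $C$ in \eqref{yu-3-9-bbb-2} does not deteriorate as the horizon $k$ grows. The splitting $H=E_u\oplus E_s$, the decay on $E_s$, and the whole converse implication are routine.
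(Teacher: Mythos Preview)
Your argument is correct, but for the implication $(iii)\Rightarrow(i)$ it follows a genuinely different route from the paper's. The paper first performs the Kalman decomposition of the \emph{matrix pair} $(S,\mathcal D)$ (Lemma~\ref{lem-2}): the uncontrollable block $S_3$ then has spectrum in $\{\mathrm{Re}\,\lambda<\lambda_1\}$, so the corresponding PDE component decays on its own, while on the controllable block the full Kalman rank holds and the paper invokes an interpolation-type observability inequality borrowed from \cite{wang-yan-yu} (Lemma~\ref{Preli-2}) together with Lemma~\ref{theo-1} to obtain the weak observability estimate of Theorem~\ref{yu-main-theorem}; the admissible impulse instants are those in $\mathcal L_{S,\mathcal D,\hbar}$. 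You instead split the \emph{operator} $A=\triangle_n+S$ spectrally into a finite-dimensional unstable part $E_u$ and a stable complement, establish exact impulse controllability on $E_u$ by hand (linear independence of the $e_i|_{\omega_k}$ via unique continuation, plus a confluent-Vandermonde/linear-recurrence argument at the equally spaced sampling times), and then verify criterion~$(iv)$ of Theorem~\ref{yu-main-theorem}. Your approach is more self-contained in that it bypasses the external PDE estimate of Lemma~\ref{Preli-2}, at the price of the discrete-sampling step, which does go through with $k_0\ge n$ and the non-resonance condition on $\tau$ (the sequence $l\mapsto D_k^{\top}e^{S^{\top}(s_1+l\hbar\tau)}\psi_i$ satisfies a linear recurrence of order $n$ with nonzero characteristic roots $e^{\mu\hbar\tau}$, so $n$ consecutive zeros force it to vanish identically). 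For $(i)\Rightarrow(iii)$ your contrapositive via the observability inequality~\eqref{0315-1} is the dual formulation of the paper's argument, which instead exhibits an explicit non-decaying trajectory $e^{(\lambda_0-\lambda_1)t}\xi e_1$ of the closed-loop system; the two are equivalent.
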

In order to prove Theorem \ref{thm-2}, we first introduce some definitions and two lemmas.
For any $E\in \mathbb R^{k\times k}$, $F\in \mathbb R^{k\times \ell}$ ($k, \ell\in\mathbb N^+$), we define (see \cite{Q-W} and  \cite{Q-W-Y})
\begin{equation}\label{0314-3}
d_E:=\min\left\{\frac{\pi}{\textrm{Im} (\lambda)}:\, \lambda\in \sigma(E)\right\} \textrm{ and }q(E,F):= \max\{ \textrm{dim} \mathcal V_E(f):\,f \textrm{ is a column of }F\},
\end{equation}
where $\mbox{Im}(\lambda)=\lambda_2$ if $\lambda=\lambda_1+i\lambda_2$, $\mathcal V_E(f) := \textrm{ span}\{f,Ef,\cdots, E^{k-1}f\}$ and we agree that $\frac{1}{0}=+\infty$.
Moreover, we define
\begin{equation*}
\begin{split}
\mathcal L_{E,F,\hbar} :=
\big\{
       \{\tau_j\}_{j\in\mathbb N}: \; 0=\tau_0<\tau_1<\cdots<\tau_n<\cdots,\qquad\qquad\qquad\qquad\qquad\qquad\qquad\quad\\
                 \,\textrm{Card}((s,s+d_E)\cap \{\tau_j\}_{j\in\mathbb N})\geq \hbar q(E,F)+2\textrm{ for any }s\in\mathbb R^+
 \big\}.
 \end{split}
\end{equation*}

The following result can be found in the proof of Theorem 2.2 in \cite{Q-W}.
\begin{lemma}\label{theo-1} Let $E\in \mathbb R^{k\times k}$, $F\in \mathbb R^{k\times \ell}$ ($k, \ell\in\mathbb N^+$).
 For each strictly increasing sequence
$\{\tau_i\}_{i=1}^{q(E,F)}$ with
$\tau_{q(E,F)}-\tau_1<d_{E}$, we have that
\begin{equation*}
    \mbox{span}\;\{e^{-E\tau_1}F, e^{{-E}\tau_2}F,
    \cdots, e^{-E\tau_{q(E,F)}}F\}
    =\mbox{span}\;\{F, EF, \cdots, {E}^{k-1}F\}.
\end{equation*}
\end{lemma}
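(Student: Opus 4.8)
The plan is to reduce the asserted identity, via elementary linear algebra, to the nonsingularity of a confluent generalized Vandermonde matrix built from a system of quasi-polynomials, and then to obtain that nonsingularity from the classical zero-counting estimate for exponential polynomials on short intervals --- the threshold below which that estimate is effective being exactly $d_E$. To begin, the inclusion $\mathrm{span}\{e^{-E\tau_1}F,\dots,e^{-E\tau_{q(E,F)}}F\}\subseteq\mathrm{span}\{F,EF,\dots,E^{k-1}F\}$ is immediate from Cayley--Hamilton, since each $e^{-E\tau}$ is a polynomial in $E$ of degree at most $k-1$. For the reverse inclusion I would argue columnwise: writing $F=[f^{(1)},\dots,f^{(\ell)}]$, the two spans are the sums over $s$ of $\mathrm{span}\{e^{-E\tau_i}f^{(s)}:i\}$ and of $\mathcal V_E(f^{(s)})$ respectively, so it suffices to fix a single column $f$ and show $\mathrm{span}\{e^{-E\tau_i}f:1\le i\le q(E,F)\}=\mathcal V_E(f)$. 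Since every $e^{-E\tau_i}f$ already lies in $\mathcal V_E(f)$ and $d:=\dim\mathcal V_E(f)\le q(E,F)$ by the definition of $q(E,F)$, it is enough to prove that the first $d$ vectors $e^{-E\tau_1}f,\dots,e^{-E\tau_d}f$ are linearly independent: they then span the $d$-dimensional space $\mathcal V_E(f)$, and the remaining (also $\mathcal V_E(f)$-valued) samples neither enlarge nor shrink that span.

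For the linear independence I would use the primary decomposition of $f$ with respect to $E$. Let $\mu_1,\dots,\mu_r$ be the distinct roots of the monic polynomial of least degree annihilating $f$, write $f=\sum_{j=1}^r f_j$ with $f_j$ lying in the generalized eigenspace of $\mu_j$ and of exact nilpotency height $p_j$ (so that $\sum_j p_j=d$), and recall that $\{(E-\mu_j I)^l f_j:1\le j\le r,\ 0\le l\le p_j-1\}$ is then a basis of $\mathcal V_E(f)$. Using $e^{-Et}f_j=e^{-\mu_j t}\sum_{l=0}^{p_j-1}\tfrac{(-t)^l}{l!}(E-\mu_j I)^l f_j$ and expanding a relation $\sum_{i=1}^d c_i e^{-E\tau_i}f=0$ in this basis, equating coefficients yields the homogeneous system
\[
\sum_{i=1}^{d} c_i\,\tau_i^{\,l}\,e^{-\mu_j\tau_i}=0,\qquad j=1,\dots,r,\ \ l=0,\dots,p_j-1 .
\]
This is a $d\times d$ system in $c=(c_1,\dots,c_d)$ whose matrix is the confluent generalized Vandermonde matrix of the quasi-polynomial system $\{t^l e^{-\mu_j t}\}$ at the nodes $\tau_1<\dots<\tau_d$; the desired independence is equivalent to this matrix being nonsingular, i.e.\ to the statement that no nonzero combination $\Phi(t)=\sum_{j=1}^r q_j(t)e^{-\mu_j t}$ with $\deg q_j\le p_j-1$ vanishes at all $d$ of those nodes.

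The last point is where the time-window hypothesis is used, and I expect it to be the only genuinely delicate step; everything above is routine. A nonzero such $\Phi$ has $n:=\sum_j(\deg q_j+1)\le d$ ``parameters'', and the classical (P\'olya-type) estimate bounds the number of its real zeros in an interval $[a,b]$ by $n-1+\tfrac{(b-a)(\beta-\alpha)}{2\pi}$, where $[\alpha,\beta]$ is the range of the imaginary parts of $-\mu_1,\dots,-\mu_r$. Since $E$ is real this range is symmetric, so the correction term is strictly less than $1$ exactly when $b-a<\pi/\max_j|\mathrm{Im}\,\mu_j|=d_E$; because $\tau_d-\tau_1\le\tau_{q(E,F)}-\tau_1<d_E$, we conclude that $\Phi$ has at most $d-1$ real zeros in the relevant interval, contradicting vanishing at $d$ distinct nodes unless $\Phi\equiv0$ --- which, by the linear independence of the quasi-monomials $t^l e^{-\mu_j t}$, forces all coefficients, hence $c$, to vanish. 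Thus the Vandermonde matrix is nonsingular and the lemma follows. This exponential-polynomial zero bound is precisely what is proved in the proof of Theorem 2.2 of \cite{Q-W}, so in the present paper one need only invoke it.
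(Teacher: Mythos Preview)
The paper does not give its own proof of this lemma; it simply records that the result ``can be found in the proof of Theorem~2.2 in \cite{Q-W}'' and moves on. Your proposal is therefore not competing against an argument in the paper but rather reconstructing what the citation covers. The reconstruction is sound: the Cayley--Hamilton inclusion, the columnwise reduction using $d=\dim\mathcal V_E(f)\le q(E,F)$, the Jordan/primary decomposition of the cyclic vector yielding the basis $\{(E-\mu_jI)^lf_j\}$, and the passage to the confluent exponential--Vandermonde system are all correct and standard. The final nonsingularity step is reduced, exactly as you say, to the P\'olya-type zero bound for exponential polynomials on an interval shorter than $\pi/\max_j|\mathrm{Im}\,\mu_j|\ge d_E$, which is precisely the analytic ingredient established in \cite{Q-W}; so your argument and the paper's citation rest on the same source. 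In short: your approach is correct and is a faithful (and more explicit) unpacking of the result the paper merely quotes.
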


In the following of this section, we discuss for  fixed $\hbar\in\mathbb N^+,\,S\in \mathbb R^{n\times n}$ and $\{D_k\}_{k=1}^\hbar\subset \mathbb R^{n\times m}$.
Write $\mathcal{D}:=(D_1,\cdots, D_\hbar)\in\mathbb R^{n\times m\hbar}$.
Lemma \ref{Preli-2} is quoted from \cite{wang-yan-yu}.

\begin{lemma}\label{Preli-2}
    Let $\Lambda_{\hbar}=\{t_j\}_{j\in\mathbb N}\in\mathcal{J}_{\hbar}$ and $\widehat k\in\mathbb{N}^+$ be fixed. The following two claims are equivalent:
\begin{enumerate}
  \item [(i)] $\text{rank}\,\left(\mathrm e^{-S t_1}D_{\nu(1)},\mathrm e^{-S t_2}D_{\nu(2)},\cdots, \mathrm e^{-S t_{\widehat k}}D_{\nu(\widehat k)}\right)=n$.
  \item [(ii)] There are two constants $\theta\in(0,1)$ (independent of $\widehat k$) and $C(\widehat k)>0$ so that
\begin{eqnarray}\label{0317-1}
 &\;&\|\mathrm e^{A^*t_{\widehat k+1}}\varphi\|_{(L^2(\Omega))^n}\nonumber\\
 &\leq& C(\widehat k)\left(\displaystyle{\sum_{j=1}^{\widehat k}}
    \|B^*_{\nu(j)}\mathrm e^{A^*(t_{\widehat k+1}-t_j)}\varphi\|_{(L^2(\Omega))^m}\right)^{\theta}
    \|\varphi\|^{1-\theta}_{(L^2(\Omega))^n}\;\;\mbox{for all}\;\;\varphi\in (L^2(\Omega))^n.
\end{eqnarray}
\end{enumerate}
\end{lemma}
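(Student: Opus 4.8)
The plan is to prove the two implications separately, exploiting the spectral structure of $A=\triangle_n+S$. Let $\{(\lambda_i,e_i)\}_{i\ge1}$ be the Dirichlet eigenpairs of $-\triangle$, so that $\{e_i\}_{i\ge1}$ is an orthonormal basis of $L^2(\Omega)$ and $0<\lambda_1\le\lambda_2\le\cdots$. Writing $\varphi=\sum_{i\ge1}\varphi_ie_i$ with $\varphi_i\in\mathbb R^n$, and using $A^*=\triangle_n+S^*$, one has $e^{A^*t}\varphi=\sum_{i\ge1}e^{-\lambda_it}e^{S^*t}\varphi_ie_i$, while, since $B_k^*=D_k^*\chi_{\omega_k}$, we get $B_{\nu(j)}^*e^{A^*(t_{\widehat k+1}-t_j)}\varphi=\chi_{\omega_{\nu(j)}}\sum_{i\ge1}\big(D_{\nu(j)}^*e^{-\lambda_i(t_{\widehat k+1}-t_j)}e^{S^*(t_{\widehat k+1}-t_j)}\varphi_i\big)e_i$. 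These two identities are the backbone of the whole argument.

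For the easy implication $(ii)\Rightarrow(i)$ I would argue by contraposition. If $(i)$ fails, then $\mathrm{rank}\,(e^{-St_1}D_{\nu(1)},\dots,e^{-St_{\widehat k}}D_{\nu(\widehat k)})<n$, so there is $w\in\mathbb R^n\setminus\{0\}$ with $D_{\nu(j)}^*e^{-S^*t_j}w=0$ for every $j\in\{1,\dots,\widehat k\}$. Set $\varphi:=(e^{-S^*t_{\widehat k+1}}w)\,e_1$, i.e.\ concentrate on the first eigenmode. Since $e^{S^*(t_{\widehat k+1}-t_j)}e^{-S^*t_{\widehat k+1}}=e^{-S^*t_j}$, the displayed observation formula gives $B_{\nu(j)}^*e^{A^*(t_{\widehat k+1}-t_j)}\varphi=e^{-\lambda_1(t_{\widehat k+1}-t_j)}\chi_{\omega_{\nu(j)}}(D_{\nu(j)}^*e^{-S^*t_j}w)e_1=0$ for all $j$, so the whole right-hand side of \eqref{0317-1} vanishes, whereas $e^{A^*t_{\widehat k+1}}\varphi=e^{-\lambda_1t_{\widehat k+1}}w\,e_1\neq0$. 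Hence \eqref{0317-1} cannot hold, and $(ii)\Rightarrow(i)$ follows.

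For the main implication $(i)\Rightarrow(ii)$ I would combine a spectral (Lebeau--Robbiano) inequality with the finite-dimensional recovery furnished by Lemma \ref{theo-1}. For a frequency cut-off $\Lambda>0$ let $\pi_\Lambda$ project onto the modes with $\lambda_i\le\Lambda$. The high-frequency part is controlled by dissipativity, $\|e^{A^*t_{\widehat k+1}}(I-\pi_\Lambda)\varphi\|_{(L^2(\Omega))^n}\le e^{-\Lambda\,\tau_{\min}}\|e^{S^*t_{\widehat k+1}}\|\,\|\varphi\|_{(L^2(\Omega))^n}$, where $\tau_{\min}:=\min_{1\le k\le\hbar}(t_k-t_{k-1})>0$ is, by periodicity, independent of $\widehat k$. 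For the low-frequency part I would first use $\omega\subset\omega_{\nu(j)}$ to bound the observation from below on the common set $\omega$, then invoke the scalar spectral inequality $\|\pi_\Lambda g\|_{L^2(\Omega)}\le Ce^{C\sqrt\Lambda}\|\pi_\Lambda g\|_{L^2(\omega)}$ (applied componentwise) to transfer the $\omega$-observation into genuine $(L^2(\Omega))^n$-norms, where the modes decouple by orthonormality. At this stage the recovery of each coefficient $\varphi_i$ reduces to inverting the finite-dimensional map $\varphi_i\mapsto(D_{\nu(j)}^*e^{-S^*t_j}\varphi_i)_{j}$; the rank hypothesis $(i)$, together with Lemma \ref{theo-1} applied on time windows of length $<d_S$ (this is exactly where $q(S,\mathcal D)$ and the $\hbar$-periodicity enter), makes this map bounded below uniformly in the mode. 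Collecting the two estimates yields $\|e^{A^*t_{\widehat k+1}}\varphi\|\le e^{C\sqrt\Lambda}\,\mathrm{obs}+e^{-c\Lambda}\|\varphi\|$, and the Lebeau--Robbiano balance between the cost $e^{C\sqrt\Lambda}$ and the gain $e^{-c\Lambda}$ produces the H\"older interpolation \eqref{0317-1} with some $\theta\in(0,1)$; because $c$ is governed by $\tau_{\min}$ rather than by $t_{\widehat k+1}$, this $\theta$ is uniform in $\widehat k$, and the $\widehat k$-dependent bounded factors are absorbed into $C(\widehat k)$.

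The main obstacle is the low-frequency step: the spectral inequality only disentangles the spatial restriction $\chi_\omega$ from the modal coefficients for a frequency-\emph{truncated} sum, whereas the raw observation carries all modes on $\omega$, where orthonormality is lost and high modes may interfere. Making this rigorous requires the Lebeau--Robbiano telescoping over dyadic frequency blocks, balanced against the heat dissipation over the consecutive gaps $\ge\tau_{\min}$, rather than a single spectral estimate; it is precisely this balance that fixes $\theta$ and keeps it independent of $\widehat k$. Checking that the Kalman recovery constant from Lemma \ref{theo-1} does not degenerate across modes is the remaining delicate point, while the rest is bookkeeping on $d_S$, $q(S,\mathcal D)$ and the gaps $t_k-t_{k-1}$.
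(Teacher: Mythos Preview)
The paper does not prove Lemma~\ref{Preli-2}: it is stated as being ``quoted from \cite{wang-yan-yu}'' and no argument is given. There is therefore no in-paper proof to compare against, and your proposal must be judged on its own.

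Your contraposition for $(ii)\Rightarrow(i)$ is correct: concentrating on the first Dirichlet mode with a vector $w$ annihilated by every $D_{\nu(j)}^{*}e^{-S^{*}t_j}$ indeed kills all observations while $e^{A^{*}t_{\widehat k+1}}\varphi\neq 0$.

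For $(i)\Rightarrow(ii)$ your architecture (Jerison--Lebeau spectral inequality on low modes, heat dissipation on high modes, finite-dimensional recovery in $\mathbb R^n$) is the right one and is, as far as can be inferred, what the cited reference does. Two points, however. First, your appeal to Lemma~\ref{theo-1}, to $d_S$, and to $q(S,\mathcal D)$ is misplaced: those tools are used \emph{elsewhere} in the paper (Lemma~\ref{thm-4}, Proposition~\ref{thm-5}) to \emph{manufacture} the rank condition from a Kalman hypothesis; here the rank condition~(i) is the assumption itself, and it alone makes $w\mapsto(D_{\nu(j)}^{*}e^{-S^{*}t_j}w)_{j=1}^{\widehat k}$ injective on $\mathbb R^n$, hence bounded below by a constant depending only on the fixed data. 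No time-window restriction or $\hbar$-periodicity enters this lemma. Second, in the per-mode recovery the $j$-th observation of $\varphi_i$ carries the mode-dependent weight $e^{-\lambda_i(t_{\widehat k+1}-t_j)}$, so you should note that, after factoring out $e^{-\lambda_i t_{\widehat k+1}}$ (which also appears in the target $e^{A^{*}t_{\widehat k+1}}\varphi$), the residual weights $e^{\lambda_i t_j}\ge 1$ only strengthen the recovery and do not spoil uniformity in $i$. With these corrections the obstacle you flag (separating low modes on $\omega$ from the full-spectrum observation) is the genuine one, and the dyadic Lebeau--Robbiano telescoping against the dissipation over gaps $\ge\tau_{\min}$ that you propose is the standard way to close it; this indeed fixes $\theta$ through the ratio of the spectral constant and $\tau_{\min}$, independently of $\widehat k$, with all $\widehat k$-dependent factors (such as $\|e^{S^{*}t_{\widehat k+1}}\|$) absorbed into $C(\widehat k)$.
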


Based on Lemma \ref{theo-1} and Lemma \ref{Preli-2},  we obtain the following Lemma.
\begin{lemma}\label{thm-4}
Let $\Lambda_\hbar=\{t_j\}_{j\in\mathbb N}\in\mathcal J_\hbar\cap \mathcal L_{S,\mathcal{D},\hbar}$.  Suppose that
$
\text{rank}\,\left(\mathcal{D},S\mathcal{D},\cdots, S^{n-1}\mathcal{D}\right)=n.
$
Then
the system $[S,\{\chi_{\omega_k}D_k\}_{k=1}^{\hbar},\Lambda_\hbar]$ is exponentially $\hbar$-stabilizable.

 \end{lemma}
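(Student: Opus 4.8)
The plan is to deduce exponential $\hbar$-stabilizability from the weak observability inequality \eqref{yu-3-8-1} via Theorem \ref{yu-main-theorem}(ii), and to verify that inequality through the algebraic rank criterion of Lemma \ref{Preli-2}. Concretely, I will first exhibit an index $\widehat k\in\mathbb N^+$ for which $\textrm{rank}\,\big(\mathrm e^{-St_1}D_{\nu(1)},\mathrm e^{-St_2}D_{\nu(2)},\cdots,\mathrm e^{-St_{\widehat k}}D_{\nu(\widehat k)}\big)=n$; then invoke Lemma \ref{Preli-2} to get the H\"older-type observability estimate \eqref{0317-1} on $[0,t_{\widehat k+1}]$; and finally upgrade \eqref{0317-1} to the additive form \eqref{yu-3-8-1} with $k=\widehat k+1$, which yields Theorem \ref{yu-main-theorem}(ii) and hence the conclusion.

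\emph{Producing the rank condition.} Since $\Lambda_\hbar\in\mathcal L_{S,\mathcal D,\hbar}$, for every $s\in\mathbb R^+$ the open window $(s,s+d_S)$ contains at least $\hbar q(S,\mathcal D)+2$ of the instants $t_j$. Fix such an $s$ (e.g. $s=t_1$) and set $W:=(s,s+d_S)$. The key combinatorial point is that the instants inside $W$ are almost evenly distributed among the residues $\nu=1,\dots,\hbar$: by \eqref{Intro-2}, for each fixed $k\in\{1,\dots,\hbar\}$ the instants with $\nu$-value $k$ are precisely $\{t_k+mt_\hbar:\,m\in\mathbb N\}$, an arithmetic progression of step $t_\hbar$, so the number of them lying in any interval of length $d_S$ equals $\lfloor d_S/t_\hbar\rfloor$ or $\lceil d_S/t_\hbar\rceil$, independently of $k$. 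Hence the $\hbar$ class-counts in $W$ differ pairwise by at most $1$, and since they sum to at least $\hbar q(S,\mathcal D)+2$, each is at least $q(S,\mathcal D)\ge q(S,D_k)$. Thus for every $k$ there are at least $q(S,D_k)$ instants $\tau^{(k)}_1<\cdots<\tau^{(k)}_{q(S,D_k)}$ in $W$ with $\nu$-value $k$; since $\tau^{(k)}_{q(S,D_k)}-\tau^{(k)}_1<d_S$, Lemma \ref{theo-1} (with $E=S$, $F=D_k$) gives $\mathrm{span}\,\{\mathrm e^{-S\tau^{(k)}_i}D_k:\,1\le i\le q(S,D_k)\}=\mathrm{span}\,\{D_k,SD_k,\cdots,S^{n-1}D_k\}$. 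Taking the union over $k=1,\dots,\hbar$ and using the hypothesis $\textrm{rank}\,(\mathcal D,S\mathcal D,\cdots,S^{n-1}\mathcal D)=n$, the family $\{\mathrm e^{-St_j}D_{\nu(j)}:\,t_j\in W\}$ spans $\mathbb R^n$. Choosing $\widehat k$ so large that $t_{\widehat k}>s+d_S$, this family is contained in $\{\mathrm e^{-St_j}D_{\nu(j)}\}_{j=1}^{\widehat k}$, which therefore has rank $n$. (When $d_S=+\infty$ the strict-gap hypothesis in Lemma \ref{theo-1} and the counting condition are vacuous, so the argument simplifies.)

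\emph{From \eqref{0317-1} to \eqref{yu-3-8-1}.} Lemma \ref{Preli-2} now gives $\theta\in(0,1)$ and $C(\widehat k)>0$ so that \eqref{0317-1} holds for all $\varphi\in(L^2(\Omega))^n$. Writing $b:=\sum_{j=1}^{\widehat k}\|B^*_{\nu(j)}\mathrm e^{A^*(t_{\widehat k+1}-t_j)}\varphi\|_{(L^2(\Omega))^m}$ and applying Young's inequality $b^\theta\|\varphi\|^{1-\theta}\le\theta\alpha^{1/\theta}b+(1-\theta)\alpha^{-1/(1-\theta)}\|\varphi\|$ with $\alpha:=\big(C(\widehat k)(1-\theta)/\sigma\big)^{1-\theta}$, one obtains, for each $\sigma\in(0,1)$, a constant $C_\sigma>0$ with $\|\mathrm e^{A^*t_{\widehat k+1}}\varphi\|_{(L^2(\Omega))^n}\le C_\sigma\,b+\sigma\|\varphi\|_{(L^2(\Omega))^n}$. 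Bounding $b\le\sqrt{\widehat k}\,\big(\sum_{j=1}^{\widehat k+1}\|B^*_{\nu(j)}\mathrm e^{A^*(t_{\widehat k+1}-t_j)}\varphi\|_{(L^2(\Omega))^m}^2\big)^{1/2}$ by the Cauchy--Schwarz inequality (the extra $j=\widehat k+1$ term being nonnegative) gives \eqref{yu-3-8-1} with $k=\widehat k+1$ and $C(\sigma)=C_\sigma\sqrt{\widehat k}$. Hence Theorem \ref{yu-main-theorem}(ii) holds, and by Theorem \ref{yu-main-theorem} the system $[S,\{\chi_{\omega_k}D_k\}_{k=1}^{\hbar},\Lambda_\hbar]$ is exponentially $\hbar$-stabilizable.

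\emph{Expected main obstacle.} The delicate step is the counting argument: one must check that the \emph{single} window of length $<d_S$ provided by $\Lambda_\hbar\in\mathcal L_{S,\mathcal D,\hbar}$ simultaneously yields enough instants of \emph{every} residue class to feed Lemma \ref{theo-1}, which is exactly why the arithmetic-progression structure \eqref{Intro-2} of the instants (forcing the $\hbar$ class-counts to differ by at most one) is indispensable, and one has to be careful about open versus closed windows so that the strict gap condition $\tau^{(k)}_{q(S,D_k)}-\tau^{(k)}_1<d_S$ of Lemma \ref{theo-1} is genuinely met. The reduction of Theorem \ref{thm-4} to the rank condition, and the passage from the H\"older-type estimate to the additive one, are then routine.
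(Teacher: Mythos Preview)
Your proposal is correct and follows essentially the same route as the paper: verify the rank condition of Lemma~\ref{Preli-2} by applying Lemma~\ref{theo-1} to each residue class, then upgrade the H\"older estimate \eqref{0317-1} to the additive form \eqref{yu-3-8-1} via Young's inequality and conclude through Theorem~\ref{yu-main-theorem}. The only difference is that the paper obtains the needed instants in each class more directly---for each $j\in\{1,\dots,\hbar\}$ it simply takes $t_j,t_{j+\hbar},\dots,t_{j+q(S,\mathcal D)\hbar}$ and uses $\Lambda_\hbar\in\mathcal L_{S,\mathcal D,\hbar}$ (applied at $s\in(t_{j-1},t_j)$) to force $t_{j+q(S,\mathcal D)\hbar}-t_j<d_S$---rather than your pigeonhole counting, so your ``expected main obstacle'' is in fact avoidable.
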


\begin{proof}
Since $\Lambda_\hbar\in \mathcal L_{S,\mathcal{D},\hbar}$, by the definition of $q(S,\mathcal{D})$ (see \eqref{0314-3}) and the fact that $\mathcal{D}=(D_1,\cdots, D_\hbar)$, it is easy to see
\begin{equation*}
t_{j+q(S,D_j)\hbar}-t_j\leq t_{j+q(S,\mathcal{D})\hbar}-t_j<d_S\,\textrm{ for each }j\in\{1,\ldots,\hbar\}.
\end{equation*}
This, along with Lemma \ref{theo-1}, shows that
\begin{equation*}
\textrm{span}\,\{\mathrm{e}^{-St_j}D_j,\cdots, \mathrm{e}^{-St_{j+q(H,D_j)\hbar}}D_j\}=\textrm{span}\,\{D_j,SD_j,\cdots, S^{n-1}D_j\} \textrm{ for each } j\in \{1,\ldots,\hbar\}.
\end{equation*}
From this, we obtain that
\begin{eqnarray}\label{0321-1}
  \sum_{j=1}^\hbar \textrm{span}\,\{\mathrm e^{-St_j}D_j,\cdots, \mathrm e^{-St_{j+q(S,D_j)\hbar}}D_j\}
&=&\sum_{j=1}^\hbar \textrm{span}\,\{D_j,SD_j,\cdots,S^{n-1}D_j\}\nonumber\\
&=&\textrm{span}\;\{\mathcal{D},S\mathcal{D},\cdots,S^{n-1}\mathcal{D}\}.
\end{eqnarray}
    Here, for two linear spaces $V_1$ and $V_2$, we let $V_1+V_2=\mbox{span}\{V_1,V_2\}$. Noting that for each $j\in\{1,\ldots,\hbar\}$, $\nu(j)=\nu(j+\ell\hbar)$ for any $\ell\in\mathbb N$, \eqref{0321-1} implies that
\begin{equation}\label{0321-2}
\textrm{span}\,\{\mathrm{e}^{-St_1}D_{\nu(1)},\cdots, \mathrm{e}^{-St_{\hbar+q(S,\mathcal{D})\hbar}}D_{\nu\left(\hbar
+q(S,\mathcal{D})\hbar\right)}\}= \textrm{span}\,\{\mathcal{D},S\mathcal{D},\cdots,S^{n-1}\mathcal{D}\}.
\end{equation}
Denote $\widehat n=\hbar+q(S,\mathcal{D})\hbar$. By (\ref{0321-2}) and our assumption, we have that
$
\textrm{rank} \,(\mathrm{e}^{-St_1}D_{\nu(1)},\cdots, \mathrm{e}^{-St_{\widehat n}}D_{\nu(\widehat n)})=n.
$
This, along with Lemma \ref{Preli-2}, shows that there exists $\theta\in(0,1)$ and $C(\widehat n)>0$
so that \eqref{0317-1} stands with $\widehat k$ replaced by $\widehat n$.
Then for each $\alpha>0$, there exists  a constant $C(\widehat n,\alpha)>0$ so that
\begin{equation*}
 \|\mathrm e^{A^*t_{\widehat n+1}}\varphi\|_{(L^2(\Omega))^n}
 \leq C(\widehat n,\alpha)\left(\displaystyle{\sum_{j=1}^{\widehat n}}
    \|B^*_{\nu(j)}\mathrm e^{A^*(t_{\widehat n+1}-t_j)}\varphi\|^2_{(L^2(\Omega))^m}\right)^{\frac{1}{2}}
   +\alpha \|\varphi\|_{(L^2(\Omega))^n}\;\;\mbox{for all}\;\;\varphi\in (L^2(\Omega))^n.
\end{equation*}
This, along with Theorem \ref{yu-main-theorem}, completes the proof.
%\begin{equation}
%\begin{array}{lll}
% &&\|\mathrm e^{A^*t_{\widehat n+1}}\varphi\|_{(L^2(\Omega))^n}\\
% &\leq& M(\widehat n)\left(\displaystyle{\sum_{j=1}^{\widehat n}}
%    \|B^*_{\nu(j)}\mathrm e^{A^*(t_{\widehat n+1}-t_j)}\varphi\|_{(L^2(\Omega))^m}\right)^{\theta}
%    \|\varphi\|^{1-\theta}_{(L^2(\Omega))^n}\;\;\mbox{for all}\;\;\varphi\in (L^2(\Omega))^n.
%\end{array}
%\end{equation}
%Therefore, for any $\alpha>0$, there exists $M=M(\widehat n, \alpha)>0$ so that \eqref{0317-1} stands.
\end{proof}

\begin{lemma}\label{lem-2}(Kalman controllability decomposition (see \cite{Sontag})) The  following statements are equivalent:
\begin{enumerate}
\item[(i)] rank$\,(\mathcal{D},S\mathcal{D},\cdots, S^{n-1}\mathcal{D})=n_1<n$;

\item[(ii)] There exists an invertible matrix $J\in\mathbb R^{n\times n}$ so that
\begin{equation}\label{0322-1}
J^{-1}SJ=\left(
\begin{array}{cc}
S_1 & S_2 \\
0     & S_3
\end{array}
\right) \textrm{ and }
J^{-1}\mathcal{D}=\left(
\begin{array}{c}
\widetilde{\mathcal{D}} \\
0     \\
\end{array}
\right),
\end{equation}
where $S_1\in \mathbb R^{n_1\times n_1}$, $\widetilde{\mathcal{D}}\in \mathbb R^{n_1\times m\hbar}$  and
$
\textrm{rank}\,(\widetilde{\mathcal{D}},S_1\widetilde{\mathcal{D}},\cdots, S_1^{n_1-1}\widetilde{\mathcal{D}})=n_1.
$
\end{enumerate}

\end{lemma}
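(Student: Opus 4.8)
The plan is to argue through the \emph{controllable subspace} of the pair $(S,\mathcal{D})$, which is the standard route to the Kalman decomposition; one could alternatively just invoke \cite{Sontag}, but I will sketch the argument. First I would set
$\mathcal{R}:=\textrm{span}\,\{\mathcal{D},S\mathcal{D},\cdots,S^{n-1}\mathcal{D}\}\subset\mathbb R^n$, i.e. $\mathcal R=\sum_{k=0}^{n-1}S^k\,\mathrm{Im}(\mathcal D)$. By the Cayley--Hamilton theorem, $S^n$ is a linear combination of $I,S,\cdots,S^{n-1}$, hence $S^n\,\mathrm{Im}(\mathcal D)\subseteq\mathcal R$ and therefore $S\mathcal{R}\subseteq\mathcal{R}$, i.e. $\mathcal R$ is $S$-invariant. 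Under assumption (i) we have $\dim\mathcal{R}=n_1<n$.

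Next I would choose a basis $\{\xi_1,\cdots,\xi_{n_1}\}$ of $\mathcal{R}$, extend it to a basis $\{\xi_1,\cdots,\xi_n\}$ of $\mathbb R^n$, and put $J:=(\xi_1\mid\cdots\mid\xi_n)\in\mathbb R^{n\times n}$, which is invertible. Since $\mathcal{R}$ is $S$-invariant and equals $\textrm{span}\,\{\xi_1,\cdots,\xi_{n_1}\}$, for each $j\leq n_1$ the coordinate vector of $S\xi_j$ in this basis vanishes below row $n_1$; hence $J^{-1}SJ$ has the block form $\left(\begin{smallmatrix}S_1&S_2\\0&S_3\end{smallmatrix}\right)$ with $S_1\in\mathbb R^{n_1\times n_1}$. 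Likewise, every column of $\mathcal{D}$ lies in $\mathcal{R}$, so $J^{-1}\mathcal{D}=\left(\begin{smallmatrix}\widetilde{\mathcal{D}}\\0\end{smallmatrix}\right)$ with $\widetilde{\mathcal{D}}\in\mathbb R^{n_1\times m\hbar}$. This yields the decomposition \eqref{0322-1}, and it remains to check the rank assertion on $(S_1,\widetilde{\mathcal D})$ together with the converse implication.

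Both of these follow from a single rank identity. For each $k\geq 0$ the block structure gives $(J^{-1}SJ)^k(J^{-1}\mathcal{D})=\left(\begin{smallmatrix}S_1^k\widetilde{\mathcal{D}}\\0\end{smallmatrix}\right)$, and since left-multiplication by the invertible $J^{-1}$ preserves rank,
\begin{equation*}
\textrm{rank}\,(\mathcal{D},S\mathcal{D},\cdots,S^{n-1}\mathcal{D})=\textrm{rank}\,(\widetilde{\mathcal{D}},S_1\widetilde{\mathcal{D}},\cdots,S_1^{n-1}\widetilde{\mathcal{D}})=\textrm{rank}\,(\widetilde{\mathcal{D}},S_1\widetilde{\mathcal{D}},\cdots,S_1^{n_1-1}\widetilde{\mathcal{D}}),
\end{equation*}
where the last equality is Cayley--Hamilton applied to $S_1\in\mathbb R^{n_1\times n_1}$. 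Under (i) the left-hand side is $n_1$, hence so is the right-hand side, which proves (i)$\Rightarrow$(ii); conversely, under (ii) the right-hand side equals $n_1<n$, hence so does the left-hand side, which proves (ii)$\Rightarrow$(i).

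I do not expect a real obstacle here: the whole argument is elementary linear algebra (invariance of the controllable subspace under $S$, change of basis, rank preservation under similarity, and Cayley--Hamilton). The only point requiring a little care is the bookkeeping in the second equality above, namely truncating from $S_1^{n-1}$ down to $S_1^{n_1-1}$ via Cayley--Hamilton applied to the \emph{small} block $S_1$ rather than to $S$ itself.
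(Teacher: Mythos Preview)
Your proof is correct and complete; it is the standard argument via the $S$-invariance of the controllable subspace, an adapted basis, and Cayley--Hamilton. The paper itself does not supply a proof of this lemma but merely cites \cite{Sontag}, so there is no conflict of approach --- you have simply written out what the paper leaves as a reference.
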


\begin{proposition}\label{thm-5}
 Let  $\Lambda_\hbar=\{t_j\}_{j\in\mathbb{N}}\in \mathcal J_\hbar\cap \mathcal L_{S,\mathcal{D},\hbar}$. Suppose that
\begin{equation}\label{1201-1}
\textrm{ rank}(\lambda I -S,\mathcal{D})=n \textrm{ for any }\lambda\in\mathbb C \textrm{ with }\textrm{Re}(\lambda)\geq \lambda_1.
 \end{equation}
Then
the system $[S,\{\chi_{\omega_k}D_k\}_{k=1}^{\hbar},\Lambda_\hbar]$ is exponentially $\hbar$-stabilizable.
\end{proposition}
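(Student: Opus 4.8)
The plan is to peel off the controllable part of the dynamics via the Kalman controllability decomposition (Lemma \ref{lem-2}), stabilize it by Lemma \ref{thm-4}, and show that the remaining (uncontrollable) block is automatically exponentially stable because the hypothesis \eqref{1201-1} pins its eigenvalues strictly to the left of $\lambda_1$, a decay rate that the spectral gap of the Dirichlet Laplacian beats. Concretely: if $\mathrm{rank}\,(\mathcal D,S\mathcal D,\dots,S^{n-1}\mathcal D)=n$ the claim is immediate from Lemma \ref{thm-4} (recall $\Lambda_\hbar\in\mathcal J_\hbar\cap\mathcal L_{S,\mathcal D,\hbar}$), so I would assume $n_1:=\mathrm{rank}\,(\mathcal D,S\mathcal D,\dots,S^{n-1}\mathcal D)<n$ and take $J$, $S_1\in\mathbb R^{n_1\times n_1}$, $S_2$, $S_3\in\mathbb R^{n_3\times n_3}$ ($n_3:=n-n_1$) and $\widetilde{\mathcal D}$ as in \eqref{0322-1}. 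Passing to the unknown $\tilde x:=J^{-1}x=(\tilde x^{(1)},\tilde x^{(2)})\in(L^2(\Omega))^{n_1}\times(L^2(\Omega))^{n_3}$ (legitimate since the constant matrix $J$ commutes with $\triangle_n$ and with the scalar factors $\chi_{\omega_k}$), the system \eqref{0314-1} becomes block–triangular: $\tilde x^{(2)}$ evolves with \emph{no} control and no jumps via $\partial_t\tilde x^{(2)}=\triangle_{n_3}\tilde x^{(2)}+S_3\tilde x^{(2)}$ (homogeneous Dirichlet data), while $\tilde x^{(1)}$ solves the impulse heat system $[S_1,\{\chi_{\omega_k}\widetilde D_k\}_{k=1}^\hbar,\Lambda_\hbar]$ with the additional additive forcing $S_2\tilde x^{(2)}$ in the interior equation.

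Next I would establish that $\mathrm{Re}\,\lambda<\lambda_1$ for all $\lambda\in\sigma(S_3)$. For any $\lambda_0\in\mathbb C$, multiplying the augmented matrix by $J^{-1}$ on the left and by $J$ on its first block gives $\mathrm{rank}\,(\lambda_0 I_n-S,\mathcal D)=\mathrm{rank}\,(\lambda_0 I_n-J^{-1}SJ,\,J^{-1}\mathcal D)$, whose lower block row is $(0\ \ \lambda_0 I_{n_3}-S_3\ \ 0)$ by \eqref{0322-1}; if $\lambda_0\in\sigma(S_3)$ this row has rank $<n_3$, so the whole matrix has rank $<n$, contradicting \eqref{1201-1} whenever $\mathrm{Re}\,\lambda_0\ge\lambda_1$. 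Hence, with $s(S_3):=\max_{\lambda\in\sigma(S_3)}\mathrm{Re}\,\lambda<\lambda_1$, decomposing $(L^2(\Omega))^{n_3}$ along the Dirichlet eigenspaces of $-\triangle$ (all eigenvalues $\mu_k\ge\lambda_1$) shows that $\triangle_{n_3}+S_3$ acts on the $k$-th eigenspace as $-\mu_k I_{n_3}+S_3$, so $\|\mathrm e^{t(\triangle_{n_3}+S_3)}\|_{\mathcal L((L^2)^{n_3})}=\sup_k\mathrm e^{-\mu_k t}\|\mathrm e^{tS_3}\|\le C_0\,\mathrm e^{-\mu'' t}$ for any fixed $\mu''\in(0,\lambda_1-s(S_3))$; in particular $\tilde x^{(2)}(t)=\mathrm e^{t(\triangle_{n_3}+S_3)}\tilde x^{(2)}(0)$ decays exponentially.

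For the controllable block I would check that $\Lambda_\hbar\in\mathcal L_{S_1,\widetilde{\mathcal D},\hbar}$: from $\sigma(S_1)\subset\sigma(S)$ one has $d_{S_1}\ge d_S$, and for every column $f$ of $\mathcal D$, writing $J^{-1}f=(\tilde f;0)$ and using the block–triangular form of $J^{-1}SJ$ gives $(J^{-1}SJ)^k(\tilde f;0)=(S_1^k\tilde f;0)$, hence $\dim\mathcal V_S(f)=\dim\mathcal V_{S_1}(\tilde f)$ and $q(S_1,\widetilde{\mathcal D})=q(S,\mathcal D)$; therefore every window $(s,s+d_{S_1})\supseteq(s,s+d_S)$ contains at least $\hbar q(S_1,\widetilde{\mathcal D})+2$ of the $t_j$. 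Combined with $\mathrm{rank}\,(\widetilde{\mathcal D},S_1\widetilde{\mathcal D},\dots,S_1^{n_1-1}\widetilde{\mathcal D})=n_1$ from Lemma \ref{lem-2}, Lemma \ref{thm-4} (applied with $n$, $S$, $\mathcal D$, $H$ replaced by $n_1$, $S_1$, $\widetilde{\mathcal D}$, $(L^2(\Omega))^{n_1}$) produces a feedback $\{\widetilde F_k\}_{k=1}^\hbar$ exponentially $\hbar$-stabilizing $[S_1,\{\chi_{\omega_k}\widetilde D_k\}_{k=1}^\hbar,\Lambda_\hbar]$; let $\Psi(t,s)$ be the resulting closed-loop evolution family, so that $\|\Psi(t,s)\|_{\mathcal L((L^2)^{n_1})}\le C\mathrm e^{-\mu(t-s)}$ for $t\ge s$.

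Finally I would apply to \eqref{0314-1} the feedback $F_{\nu(j)}:=\widetilde F_{\nu(j)}\,\pi_1\,J^{-1}$, where $\pi_1$ is the projection onto the first $n_1$ components (so $u_j=\widetilde F_{\nu(j)}\tilde x^{(1)}(t_j)$). Then $\tilde x^{(2)}$ is as above, and Duhamel's formula for the closed loop (the forcing enters only the continuous part, not the jumps) gives $\tilde x^{(1)}(t)=\Psi(t,0)\tilde x^{(1)}(0)+\int_0^t\Psi(t,\tau)S_2\tilde x^{(2)}(\tau)\,d\tau$, whence
\[
\|\tilde x^{(1)}(t)\|\le C\mathrm e^{-\mu t}\|\tilde x^{(1)}(0)\|+C\,\|S_2\|\,C_0\int_0^t\mathrm e^{-\mu(t-\tau)}\mathrm e^{-\mu''\tau}\,d\tau\,\|x_0\|\le C'\mathrm e^{-\beta t}\|x_0\|
\]
for any $\beta<\min(\mu,\mu'')$; together with the decay of $\tilde x^{(2)}$ and $x=J\tilde x$ this gives exponential stability of the closed loop, i.e. $[S,\{\chi_{\omega_k}D_k\}_{k=1}^\hbar,\Lambda_\hbar]$ is exponentially $\hbar$-stabilizable. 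I expect the main obstacle to be the bookkeeping of the reduction, namely verifying $\Lambda_\hbar\in\mathcal L_{S_1,\widetilde{\mathcal D},\hbar}$ and pushing the exponentially small coupling term $S_2\tilde x^{(2)}$ through the Duhamel estimate with a rate uniform in $x_0$; by contrast, the spectral argument locating $\sigma(S_3)$ strictly to the left of $\lambda_1$ — the only place \eqref{1201-1} is used — is short.
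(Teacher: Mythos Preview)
Your proposal is correct and follows essentially the same route as the paper: Kalman decomposition, the rank argument forcing $\sigma(S_3)$ to lie in $\{\mathrm{Re}\,\lambda<\lambda_1\}$ so that the uncontrollable heat block decays, verification that $\Lambda_\hbar\in\mathcal L_{S_1,\widetilde{\mathcal D},\hbar}$ (the paper uses only $q(S_1,\widetilde{\mathcal D})\le q(S,\mathcal D)$ rather than your equality, but either suffices), Lemma~\ref{thm-4} for the controllable block, and a Duhamel estimate for the $S_2\tilde x^{(2)}$ coupling. Your use of the two-parameter evolution family $\Psi(t,s)$ is in fact a bit more careful than the paper's one-parameter notation.
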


\begin{proof}
Without loss of generality, we assume that $\textrm{rank}\,(\mathcal{D},S\mathcal{D}, \cdots, S^{n-1}\mathcal{D})=n_1$ with $1\leq n_1<n$,
since when $n_1=n$ the conclusion follows from Lemma \ref{thm-4}  at once.
By Lemma \ref{lem-2}, there exists  an invertible matrix $J\in \mathbb R^{n\times n}$ so that
   (\ref{0322-1}) holds with  $S_1\in \mathbb R^{n_1\times n_1}$, $\widetilde{\mathcal{D}}\in \mathbb R^{n_1\times m\hbar}$  and
\begin{equation}\label{0317-7}
\textrm{rank}\,(\widetilde{\mathcal{D}},S_1\widetilde{\mathcal{D}},\cdots, S_1^{n_1-1}\widetilde{\mathcal{D}})=n_1.
\end{equation}
Denote
$
J^{-1}D_1=\left(
\begin{array}{c}
\widetilde D_1 \\
0     \\
\end{array}
\right),\textrm{...},
J^{-1}D_\hbar=\left(
\begin{array}{c}
\widetilde D_\hbar \\
0     \\
\end{array}
\right),
$
where $\widetilde D_j\in \mathbb R^{n_1\times m}$. Then
\begin{equation}\label{0317-6}
\widetilde{\mathcal{D}}=(\widetilde D_1,\cdots, \widetilde D_\hbar).
\end{equation}

First, we show that
\begin{equation}\label{1124-3}
\textrm{Re}(\lambda)<\lambda_1 \textrm{ for any }\lambda\in\sigma (S_3).
\end{equation}
For otherwise, there would exist  $\lambda_0\in\sigma (S_3)$ so that Re$(\lambda_0)\geq \lambda_1$.
It follows that
\begin{equation*}
\textrm{rank}(\lambda_0I-S,\mathcal{D})=\textrm{rank}(\lambda_0I-J^{-1}SJ, J^{-1}\mathcal{D}) =\textrm{rank}\left(
\begin{array}{ccc}
\lambda_0I-S_1 & S_2 & \widetilde{\mathcal{D}}  \\
0     & \lambda_0I-S_3 & 0
\end{array}
\right)<n,
\end{equation*}
which contradicts \eqref{1201-1}. Thus  \eqref{1124-3} stands.
Therefore, there exist constants $M_0>0$ and $\varepsilon_0>0$ so that the solution   $\widetilde{y}(\cdot)$ of system
\begin{equation}\label{1124-4}
\begin{cases}
\widetilde{y}_t-\triangle_{n-n_1} \widetilde{y}-S_3^\top\widetilde{y}=0, & \textrm{ in } \Omega\times \mathbb{R}^+,\\
\widetilde{y}=0, &\textrm{ on }\partial \Omega\times \mathbb{R}^+
\end{cases}
\end{equation}
satisfies that
\begin{equation}\label{1201-7}
\|\widetilde{y}(t)\|_{(L^2(\Omega))^{n-n_1}}\leq M_0 \mathrm e^{-\varepsilon_0 t}\|\widetilde{y}(0)\|_{(L^2(\Omega))^{n-n_1}} \textrm{ for any } t>0.
\end{equation}

Second, by the definitions of $d_S$ and $q(S,\mathcal{D})$ (see \eqref{0314-3}), we obtain that
$
d_S\leq d_{S_1}$
and
$
q(S_1,\widetilde{\mathcal{D}})\leq q(S,\mathcal{D}).
$
This, along with the fact that  $\Lambda_\hbar\in \mathcal J_\hbar\cap \mathcal L_{S,\mathcal{D},\hbar}$, shows that
$\Lambda_\hbar\in \mathcal J_\hbar\cap \mathcal L_{S_1, \widetilde{\mathcal{D}},\hbar}$. Then, by
\eqref{0317-7},  \eqref{0317-6}  and Lemma \ref{thm-4}, we have that the system
$[S_1,\{ \chi_{\omega_k} \widetilde D_k \}_{k=1}^\hbar, \Lambda_\hbar]$ is exponentially $\hbar$-stabilizable.
Thus there exists a feedback control law $\widehat{\mathcal F}=\{\widehat{F}_j\}_{j=1}^\hbar\subset \mathcal L ((L^2(\Omega))^{n_1};(L^2(\Omega))^m)$ so that the system
 \begin{equation}\label{1201-5}
\begin{cases}
z_t-\triangle_{n_1} z-S_1z=0, &\textrm{ in } \Omega\times (\mathbb{R}^+\backslash \Lambda_\hbar),\\
z=0, & \textrm{ on } \partial \Omega\times \mathbb{R}^+,\\
z(t_j^+)=z(t_j)+\chi_{\omega_{\nu(j)}}\widetilde D_{\nu(j)} \widehat{F}_{\nu(j)} z_1(t_j), &\textrm{ in } \Omega, j\in\mathbb N^+
\end{cases}
\end{equation}
is stable, i.e., if we  denote the solution of the system \eqref{1201-5} by $z(t)=S_{\widehat{\mathcal F}}(t) z(0)$, then there
exist constants $M_1>0$ and $\varepsilon_1\in(0,\varepsilon_0)$, so that
\begin{equation*}
\|z(t)\|_{(L^2(\Omega))^{n_1}}
=\|S_{\widehat{\mathcal F}}(t)z(0)\|_{(L^2(\Omega))^{n_1}}\leq M_1 \mathrm e^{-\varepsilon_1 t}\|z(0)\|_{(L^2(\Omega))^{n_1}} \textrm{ for any } t\geq 0.
\end{equation*}
This indicates that
\begin{equation}\label{1201-6}
\|S_{\widehat{\mathcal F}}(t)\|_{\mathcal L((L^2(\Omega))^{n_1})}\leq M_1 \mathrm e^{-\varepsilon_1 t} \textrm{ for any }t\geq 0.
\end{equation}

Third, we consider the system
\begin{equation}\label{1124-7}
\begin{cases}
\widehat{y}_t-\triangle_{n_1} \widehat{y}-S_1\widehat{y}=S_2\widetilde{y}, & \textrm{ in } \Omega\times (\mathbb{R}^+\backslash \Lambda_\hbar),\\
\widehat{y}=0, &\textrm{ on } \partial \Omega\times \mathbb{R}^+,\\
\widehat{y}(t_j^+)=\widehat{y}(t_j)+\chi_{\omega_{\nu(j)}}\widetilde D_{\nu(j)} \widehat{F}_{\nu(j)}  \widehat{y}(t_j), &\textrm{ in } \Omega, j\in\mathbb N^+,
\end{cases}
\end{equation}
    where $\widetilde{y}(\cdot)$ is a solution of (\ref{1124-4}).
One can easily verify that the solution to the system \eqref{1124-7} satisfies that
\begin{equation*}
\widehat{y}(t)=S_{\widehat{\mathcal F}}(t) \widehat{y}(0)+\int_0^t S_{\widehat{\mathcal F}}(t-s) S_2 \widetilde{y}(s)\,\mathrm ds \textrm{ for any }t\geq0.
\end{equation*}
This, combined with \eqref{1201-6} and \eqref{1201-7} , shows that
\begin{equation}\label{0314-5}
\|\widehat{y}(t)\|_{(L^2(\Omega))^{n_1}}
               \leq  M_2\mathrm e^{-\varepsilon_1 t}\left(\|\widehat{y}(0)\|_{(L^2(\Omega))^{n_1}}
               +\|\widetilde{y}(0)\|_{(L^2(\Omega))^{n-n_1}}\right) \textrm{ for any } t\geq 0,
\end{equation}
where  $M_2:=\max\{M_1,M_1M_0\|S_2\|_{\mathcal{L}(\mathbb R^{n-n_1};\mathbb{R}^{n_1})}\}$.
%\begin{eqnarray}\label{0314-5}
%\|y_1(t)\|_{(L^2(\Omega))^{n_1}} & \leq &\|S_\mathcal F(t)y_1(0)\|_{(L^2(\Omega))^{n_1}}
%               +\int_0^t \|S_\mathcal F(t-s)\|_{\mathcal L((L^2(\Omega))^{n_1})} \|C_2 y_2(s)\|_{(L^2(\Omega))^{n_1}} \,\mathrm ds\\
%              & \leq & M_1 \mathrm e^{-\varepsilon_1 t}\|y_1(0)\|_{(L^2(\Omega))^{n_1}}+M_1 M_2 \|C_2\|_{\mathbb R^{n_1\times (n-n_1)}}\frac{1}{\varepsilon_2-\varepsilon_1}
%               \mathrm e^{-\varepsilon_1t}\|y_2(0)\|_{(L^2(\Omega))^{n-n_1}}\nonumber\\ %\frac{1}{\varepsilon_1-\varepsilon_2} \mathrm e^{(\varepsilon_1-\varepsilon)t}\\
%              & \leq & M' \mathrm e^{-\varepsilon_1 t}\left(\|y_1(0)\|_{(L^2(\Omega))^{n_1}}+\|y_2(0)\|_{(L^2(\Omega))^{n-n_1}}\right) \textrm{ for any } t\geq 0,\nonumber
%\end{eqnarray}
%where  $M'=\max\{M_1,M_1M_2\|C_2\|_{\mathbb R^{n_1\times (n-n_1)}}\}$.

Finally, let
$
x(t)=J
\left( \begin{array}{c}
\widehat{y}\\
\widetilde{y}
\end{array}\right)(t) \textrm{ for any } t\geq 0
\textrm{ and } F_j=\left(
 \begin{array}{cc}
 \widehat{F}_j & 0
 \end{array}
 \right) J^{-1} \textrm{ for each } j\in\{1,\ldots,\hbar\}.
$
 Then by \eqref{0322-1}, \eqref{1124-4} and \eqref{1124-7}, we have $x(\cdot)$ satisfies
\begin{equation*}\label{1124-8}
\begin{cases}
x_t-\triangle_n x-Sx=0,& \textrm{ in } \Omega\times (\mathbb{R}^+\backslash \Lambda_\hbar),\\
x=0, &\textrm{ on }\partial  \Omega\times \mathbb{R}^+\\
x(t_j^+)=x(t_j)+\chi_{\omega_{\nu(j)}} D_{\nu(j)} F_{\nu(j)} x(t_j),&\textrm{ in } \Omega, j\geq 1.
\end{cases}
\end{equation*}
Moreover,  from \eqref{1201-7} and \eqref{0314-5} and the fact that $0<\varepsilon_1<\varepsilon_0$, we obtain that
\begin{equation*}
\|x(t)\|_{(L^2(\Omega))^n}\leq \|J\|_{\mathcal{L}(\mathbb{R}^n;\mathbb R^{n})}\cdot\|y(t)\|_{(L^2(\Omega))^n}\leq M_3 \mathrm e^{-\varepsilon_1 t}\|x (0)\|_{(L^2(\Omega))^n} \textrm{ for any } t\geq 0,
\end{equation*}
where $M_3:=2(M_0+M_2)\|J\|_{\mathcal{L}(\mathbb{R}^n;\mathbb R^{n})}\|J^{-1}\|_{\mathcal{L}(\mathbb{R}^n;\mathbb R^{n})}$.   This completes the proof.
\end{proof}

\begin{proof}[Proof of Theorem \ref{thm-2}]
It is easy to see that $(ii)$ and $(iii)$ are equivalent. It follows from Proposition \ref{thm-5}  that $(ii)\Rightarrow(i)$.

Next, we prove $(i)\Rightarrow(iii)$ by contradiction.
Suppose there exists
a complex number $\lambda_0\in \sigma(S)$ with Re$(\lambda_0)\geq \lambda_1$ so that
\begin{equation}\label{0116-4}
\textrm{rank}\,(\lambda_0 I_n-S, \mathcal{D}):=n_1<n.
\end{equation}
Thus by Lemma \ref{lem-2}, there exists an invertible matrix $J\in\mathbb R^{n\times n}$ so that
\begin{equation}\label{0116-5}
J^{-1}SJ=\left(
\begin{array}{cc}
S_1 & S_2 \\
0     & S_3
\end{array}
\right) \textrm{ and }
J^{-1}D_1=\left(
\begin{array}{c}
\widetilde D_1 \\
0     \\
\end{array}
\right),\cdots,
J^{-1}D_\hbar=\left(
\begin{array}{c}
\widetilde D_\hbar \\
0     \\
\end{array}
\right),
\end{equation}
where $S_1\in \mathbb R^{n_1\times n_1}$, $\widetilde D_k\in \mathbb R^{n_1\times m}$ for each $k=1,2,\ldots,\hbar$,  and by
writing $\widetilde{\mathcal{D}}=(\widetilde D_1,\widetilde D_2,\cdots, \widetilde D_\hbar)$,
\begin{equation}\label{0116-6}
\textrm{rank}\,(\widetilde{\mathcal{D}},S_1 \widetilde{\mathcal{D}},\cdots, S_1^{n_1-1}\widetilde{\mathcal{D}})=n_1.
\end{equation}
By (\ref{0116-4}) and (\ref{0116-5}), we have that
\begin{equation*}
\begin{split}
   \textrm{rank}(\lambda_0I_n-S,\mathcal{D})
=\textrm{rank}(\lambda_0I_n-J^{-1}SJ, J^{-1}\mathcal{D})
=\textrm{rank}\left(
\begin{array}{ccc}
\lambda_0I_{n_1}-S_1 & S_2 & \widetilde{\mathcal{D}}  \\
0     & \lambda_0I_{n-n_1}-S_3 & 0
\end{array}
\right)=n_1,
\end{split}
\end{equation*}
This, along with \eqref{0116-6}, shows that $\lambda_0$ is an eigenvalue of $S_3$.

    Since $(i)$ is true, let  $\Lambda_\hbar=\{t_j\}_{j\in\mathbb N}\in\mathcal J_\hbar$ be such that
the system $[S,\{\chi_{\omega_k}D_k\}_{k=1}^\hbar$,$\Lambda_\hbar]$ is exponentially  $\hbar$-stabilizable.
Then  there exists a feedback control law $\mathcal F=\{F_k\}_{k=1}^\hbar$,
  two positive constants $C$ and $\alpha$ so that the solution $x_{\mathcal F}(\cdot)$ to  the  closed-loop system
 \begin{equation}\label{0116-10}
 \begin{cases}
x_t-\triangle_n x-Sx=0,& \textrm{ in } \Omega\times (\mathbb{R}^+\backslash \Lambda_\hbar),\\
x=0, &\textrm{ on }\partial \Omega\times \mathbb{R}^+,\\
x(t_j^+)=x(t_j)+\chi_{\omega_{\nu(j)}} D_{\nu(j)} F_{\nu(j)}x(t_j),&\textrm{ in } \Omega, j\in \mathbb N^+,
\end{cases}
\end{equation}
satisfies that
\begin{equation}\label{0116-8}
\|x_{\mathcal F}(t)\|_{(L^2(\Omega))^n}\leq C \mathrm e^{-\alpha t} \|x_{\mathcal{F}}(0)\|_{(L^2(\Omega))^n} \textrm{ for any } t\geq 0.
\end{equation}
Let
$y(\cdot)=J^{-1}x_{\mathcal F}(\cdot)
=\left(
   \begin{array}{l}
   \widehat{y}(\cdot)\\
   \widetilde{y}(\cdot)
   \end{array}
   \right)$,
    where $\widehat{y}(t)\in (L^2(\Omega))^{n_1}$ and $\widetilde{y}(t)\in (L^2(\Omega))^{n-n_1}$ for each $t\geq 0$.
   Then by \eqref{0116-8}, there exists $C'>0$ so that
   \begin{equation}\label{0116-9}
   \|\widetilde{y}(t)\|_{(L^2(\Omega))^{n-n_1}}\leq  C' \mathrm e^{-\alpha t} \|y(0)\|_{(L^2(\Omega))^{n}}  \textrm{ for any } t\geq 0,
   \end{equation}
    and by (\ref{0116-5}) and \eqref{0116-10}, $\widetilde{y}(\cdot)$ verifies that
\begin{equation*}\label{0116-2}
 \begin{cases}
\widetilde{y}_t-\triangle_{n-n_1} \widetilde{y}-S_3\widetilde{y}=0,& \textrm{ in } \Omega\times \mathbb{R}^+,\\
\widetilde{y}=0, &\textrm{ on }\partial \Omega\times \mathbb{R}^+,\\
\widetilde{y}(0)=LJ^{-1}x_{\mathcal{F}}(0), &\textrm{ in }\Omega,
\end{cases}
\end{equation*}
    where $L:=\mbox{diag}(0\cdot I_{n_1},I_{n-n_1})$.
   Let $\xi\in\mathbb C^{n-n_1}$ be an eigenvector of $S_3$ with respect to $\lambda_0$ and $e_1$ be the
unitary eigenvector of $-\triangle$ with  respect to $\lambda_1$. Let
$
    x_{\mathcal{F}}(0)=J\left(
                          \begin{array}{c}
                            0 \\
                            \xi e_1 \\
                          \end{array}
                        \right).
$
    One can easily check that $\widetilde{y}(0)=\xi e_1$ and $\widetilde{y}(t)=\mathrm e^{(\lambda_0-\lambda_1)t}\xi e_1$ for any $t\geq 0$.
    It contradicts \eqref{0116-9}
since $\textrm{Re}(\lambda_0)\geq \lambda_1$. Thus, the claim $(iii)$ holds.
This completes the proof of $(i)\Rightarrow(iii)$.

In summary, we complete the proof of Theorem \ref{thm-2}.
\end{proof}

\begin{remark}
In this section, we give a necessary and sufficient condition for the exponentially $\hbar$-stabilization of the coupled heat equations (see Theorem \ref{thm-2}).
Moreover,  Proposition \ref{thm-5} tells  how to choose the impulse instants so that the system can be  exponentially $\hbar$-stabilized.
\end{remark}

\end{document}